\newtheorem{thm}{Theorem}[section]
\newtheorem{defn}[thm]{Definition}
\newtheorem{rmk}[thm]{Remark}
\newtheorem{rmks}[thm]{Remarks}
\newtheorem{cor}[thm]{Corollary}
\newtheorem{prop}[thm]{Proposition}
\newtheorem{lem}[thm]{Lemma}
\newtheorem{notation}[thm]{Notations}
\numberwithin{equation}{section}
\newcommand{\id}{{\rm id}}
\begin{document}

\title[Coactions of Hopf $C^*$-algebras on Cuntz-Pimsner algebras]{Coactions of Hopf $C^*$-algebras on Cuntz-Pimsner algebras}

\author{Dong-woon Kim}

\subjclass[2010]{46L08, 46L55, 47L65}
\keywords{$C^*$-correspondence, Cuntz-Pimsner algebra, multiplier correspondence, Hopf $C^*$-algebra, coaction, reduced crossed product}
\address{Dong-woon Kim: Department of Mathematical Sciences and Research Institute of Mathematics,
Seoul National University, Seoul 151-747, Korea}\email{dwkim0962@gmail.com}

\begin{abstract}
    Unifying two notions of an action and coaction of a locally compact group on a $C^*$-cor\-re\-spond\-ence we introduce a coaction $(\sigma,\delta)$ of a Hopf $C^*$-algebra $S$ on a $C^*$-cor\-re\-spond\-ence $(X,A)$. We show that this coaction naturally induces a coaction $\zeta$ of $S$ on the associated Cuntz-Pimsner algebra $\mathcal{O}_X$ under the weak $\delta$-invariancy for the ideal $J_X$. When the Hopf $C^*$-algebra $S$ is defined by a well-behaved multiplicative unitary, we construct a $C^*$-cor\-re\-spond\-ence $(X\rtimes_\sigma\widehat{S},A\rtimes_\delta\widehat{S})$ from $(\sigma,\delta)$ and show that it has a representation on the reduced crossed product $\mathcal{O}_X\rtimes_\zeta\widehat{S}$ by the induced coaction $\zeta$. This representation is used to prove an isomorphism between the $C^*$-algebra $\mathcal{O}_X\rtimes_\zeta\widehat{S}$ and the Cuntz-Pimsner algebra $\mathcal{O}_{X\rtimes_\sigma\widehat{S}}$ under the covariance assumption which is guaranteed in particular if the ideal $J_{X\rtimes_\sigma\widehat{S}}$ of $A\rtimes_\delta\widehat{S}$ is generated by the canonical image of $J_X$ in $M(A\rtimes_\delta\widehat{S})$ or the left action on $X$ by $A$ is injective. Under this covariance assumption, our results extend the isomorphism result known for actions of amenable groups to arbitrary locally compact groups. The Cuntz-Pimsner covariance condition which was assumed for the same isomorphism result concerning group coactions is shown to be redundant.
\end{abstract}

\maketitle

\section{Introduction}

    In this paper, we introduce coactions of Hopf $C^*$-algebras on $C^*$-cor\-re\-spond\-ences, and study the \emph{induced} coactions on the associated Cuntz-Pimsner algebras and their crossed products.
	
    A $C^*$-cor\-re\-spond\-ence $(X,A)$ is a (right) Hilbert $A$-module $X$ equipped with a left action $\varphi_A:A\rightarrow\mathcal{L}(X)$. For $(X,A)$ with injective $\varphi_A$, a $C^*$-algebra $\mathcal{O}_X$ was constructed in \cite{Pim} generalizing crossed product by $\mathbb{Z}$ and Cuntz-Krieger algebra \cite{CK}. The construction was extended in \cite{Kat} to arbitrary $C^*$-cor\-re\-spond\-ences $(X,A)$ by considering an ideal $J_X$ of $A$ --- the largest ideal that is mapped injectively into $\mathcal{K}(X)$ by $\varphi_A$ --- and requiring that a covariance-like relation should hold on $J_X$. The $C^*$-algebra $\mathcal{O}_X$, called the Cuntz-Pimsner algebra associated to $(X,A)$, is generated by $k_X(X)$ and $k_A(A)$ for the universal covariant representation $(k_X,k_A)$. The class of Cuntz-Pimsner algebras is known to be large enough and include in particular graph $C^*$-algebras. In addition, there have been significant results concerning Cuntz-Pimsner algebras such as gauge invariant uniqueness theorem, criteria on nuclearity or exactness, six-term exact sequence, and description of ideal structure (\cite{Pim,Kat,Kat2}). Thus the Cuntz-Pimsner algebras can be viewed as a well-understood class of $C^*$-algebras, and in view of this, it would be advantageous to know that a given $C^*$-algebra is a Cuntz-Pimsner algebra.

    Our work was inspired by \cite{HaoNg} and \cite{KQRo2} in which group actions and coactions on $C^*$-cor\-re\-spond\-ences are shown to induce actions and coactions on the associated Cuntz-Pimsner algebras, and the crossed products by the induced actions or coactions are proved to be realized as Cuntz-Pimsner algebras. (We refer to \cite{EKQR} for the definition of actions and coactions of locally compact groups on $C^*$-cor\-re\-spond\-ences.) More precisely, if $(\gamma,\alpha)$ is an action of a locally compact group $G$ on a $C^*$-cor\-re\-spond\-ence $(X,A)$, one can form two constructions: an action $\beta$ of $G$ on $\mathcal{O}_X$  induced by $(\gamma,\alpha)$ \cite[Lemma~2.6.(b)]{HaoNg} on the one hand, and the crossed product correspondence $(X\rtimes_{\gamma,r}G,A\rtimes_{\alpha,r}G)$ of $(X,A)$ by $(\gamma,\alpha)$ (\cite[Proposition~3.2]{EKQR} or \cite{HaoNg}) on the other. It was shown in \cite{HaoNg} that if $G$ is amenable, then the crossed product by the action $\beta$ is isomorphic to the Cuntz-Pimsner algebra associated to $(X\rtimes_{\gamma}G,A\rtimes_{\alpha}G)$:
        \begin{equation}\label{Intro.1}
        \mathcal{O}_X\rtimes_\beta G\cong\mathcal{O}_{X\rtimes_\gamma G}.
        \end{equation}
    Similarly, it was shown in \cite{KQRo2} that a nondegenerate coaction $(\sigma,\delta)$ of a locally compact group $G$ on $(X,A)$ satisfying an invariance condition induces a coaction $\zeta$ of $G$ on $\mathcal{O}_X$ \cite[Proposition~3.1]{KQRo2}, and under the hypothesis of Cuntz-Pimsner covariance, the crossed product by $\zeta$ is again a Cuntz-Pimsner algebra \cite[Theorem~4.4]{KQRo2}:
        \begin{equation}\label{Intro.2}
        \mathcal{O}_X\rtimes_\zeta G\cong\mathcal{O}_{X\rtimes_\sigma G},
        \end{equation}
    where $X\rtimes_\sigma G$ is the $C^*$-cor\-re\-spond\-ence over $A\rtimes_\delta G$ arising from the coaction $(\sigma,\delta)$ \cite[Proposition 3.9]{EKQR}.

    The study in \cite{BS} proposed the framework of reduced Hopf $C^*$-algebras arising from multiplicative unitaries including both Kac algebras \cite{ES} and compact quantum groups \cite{Wo1,Wo2} (of course locally compact groups as well). The study also established the reduced crossed products of $C^*$-algebras by reduced Hopf $C^*$-algebra coactions, which are shown to be a natural generalization of crossed products by group actions and coactions. To each multiplicative unitary $V$, two reduced Hopf $C^*$-algebras $S_V$ and $\widehat{S}_V$ are associated in \cite{BS} under the regularity condition which was modified later in \cite{Wo4,SW} with manageability or modularity; in particular, the multiplicative unitaries of locally compact quantum groups \cite{KV} are known to be manageable. Thus the reduced Hopf $C^*$-algebras arising from multiplicative unitaries are a vast generalization of groups and their dual structures.

    The goal of this paper is to show that essentially the same results can be obtained if group actions or coactions studied in \cite{HaoNg,KQRo2} are replaced by Hopf $C^*$-algebra coactions. To this end, we first need a concept of a coaction of a Hopf $C^*$-algebra on a $C^*$-cor\-re\-spond\-ence. In \cite{BS0}, coaction of a Hopf $C^*$-algebra $S$ on a Hilbert $A$-module $X$ was introduced as a pair $(\sigma,\delta)$ of a linear map $\sigma:X\rightarrow M(X\otimes S)$ and a homomorphism $\delta:A\rightarrow M(A\otimes S)$ which are required to be, among other things, compatible with the Hilbert module structure of $X$. This notion was originally aimed to define equivariant KK-groups and generalize the Kasparov product in the setting of Hopf $C^*$-algebras. Since then, the notion of coactions on Hilbert modules has been extensively dealt with in various situations: for example \cite{Bui,Buss,BM,dCY,GZ,KQ0,TdC,Tom,Verg}. In this paper, we propose a definition of coaction of a Hopf $C^*$-algebra $S$ on a $C^*$-cor\-re\-spond\-ence $(X,A)$ as a coaction $(\sigma,\delta)$ of $S$ on the Hilbert $A$-module $X$ which is also compatible with the left action $\varphi_A$ (see Definition~\ref{DefofCoactions} for the precise definition), and show that this definition unifies the separate notions of group actions and nondegenerate group coactions on $(X,A)$  (Remark~\ref{Unify.act.coact.}). We then proceed to show that the passage from a group action or coaction on $(X,A)$ to an action or coaction on $\mathcal{O}_X$ can be generalized nicely in the Hopf $C^*$-algebra framework (Theorem~\ref{induced coactions on O_X}). When the Hopf $C^*$-algebra under consideration is a reduced one defined by a well-behaved multiplicative unitary in the sense of \cite{Timm}, we construct the reduced crossed product correspondences (Theorem~\ref{crossed product correspondences}), and prove an isomorphism result analogous to \eqref{Intro.1} and \eqref{Intro.2} under a suitable condition (Theorem~\ref{Main.Theorem.}). Applying our results we improve and extend the main results of \cite{HaoNg} and \cite{KQRo2} (Remark~\ref{Sec.5.Improve.KQRo2} and Corollary~\ref{Cor1.to.Main.Thm}).

    There have been plenty of works concerning ``natural'' coactions of compact quantum groups on the Cuntz algebra $\mathcal{O}_n$ with the focus on their fixed point algebras: for example, see \cite{Ga,KNW,M,Pao} among others. These coactions are the ones induced by coactions on the finite dimensional $C^*$-cor\-re\-spond\-ences $(\mathbb{C}^n,\mathbb{C})$, and actually, can be considered within a more general context of graph $C^*$-algebras \cite{KPRR,KPR,FLR}. In fact, we will extend in our upcoming paper the notion of labeling of a graph given in \cite{KQRa} to the setting of compact quantum groups. We will also consider coactions of compact quantum groups on any finite graphs. Natural coactions on $\mathcal{O}_n$ then can be viewed as the ones arising from labelings of the graph consisting of one vertex and $n$ edges, or alternatively, the ones arising from coactions on such a graph. Moreover, the crossed products by those natural coactions can be realized as Cuntz-Pimsner algebras. In light of these facts, it is natural and desirable to extend the works of \cite{HaoNg,KQRo2} from the point of view of Hopf $C^*$-algebra coactions.

    This paper is organized as follows.

    In Section~\ref{Sec.2}, we review basic facts from \cite[Chapter 1]{EKQR} and \cite[Appendix~A]{DKQ} on multiplier correspondences. We also collect from \cite{Kat,BS} definitions and facts on Cuntz-Pimsner algebras and reduced crossed products by Hopf $C^*$-algebra coactions on $C^*$-algebras. Note that in \cite{EKQR}, Hilbert $A$-$B$ bimodules were considered while we are concerned only with Hilbert $A$-$A$ bimodules, namely nondegenerate $C^*$-cor\-re\-spond\-ences $(X,A)$.

    In Section~\ref{Sec.3}, we define a coaction $(\sigma,\delta)$ of a Hopf $C^*$-algebra $S$ on a $C^*$-cor\-re\-spond\-ence $(X,A)$ (Definition~\ref{DefofCoactions}), and prove in Theorem~\ref{induced coactions on O_X} that if $(\sigma,\delta)$ is a coaction of $S$ on $(X,A)$ such that the ideal $J_X$ is weakly $\delta$-invariant (Definition~\ref{Sec.3.delta.invariant}), then $(\sigma,\delta)$ induces a coaction $\zeta$ of $S$ on the associated Cuntz-Pimsner algebra $\mathcal{O}_X$.

    Section~\ref{Sec.4} is devoted to constructing the reduced crossed product correspondence $(X\rtimes_\sigma\widehat{S},A\rtimes_\delta\widehat{S})$ from a coaction $(\sigma,\delta)$ of a reduced Hopf $C^*$-algebra $S$ defined by a well-behaved multiplicative unitary.

    In Section~\ref{Sec.5}, we prove an isomorphism analogous to \eqref{Intro.1} and \eqref{Intro.2} in the reduced Hopf $C^*$-algebra setting. Along the way we answer the question posed in \cite[Remark~4.5]{KQRo2}; specifically, we prove that Theorem 4.4 of \cite{KQRo2} still holds without the hypothesis of the Cuntz-Pimsner covariance for the canonical embedding of $(X,A)$ into the crossed product correspondence (see Remark~\ref{Sec.5.Improve.KQRo2}). The $C^*$-cor\-re\-spond\-ence $(X\rtimes_\sigma\widehat{S},A\rtimes_\delta\widehat{S})$ is shown to have a representation $(k_X\rtimes_\sigma{\rm id},k_A\rtimes_\delta{\rm id})$ on the reduced crossed product $\mathcal{O}_X\rtimes_\zeta\widehat{S}$ by the induced coaction $\zeta$ (Proposition~\ref{embedding XxShat to OxShat is a Toep.rep.}). We prove in Theorem \ref{Main.Theorem.} that
        \[\mathcal{O}_X\rtimes_\zeta\widehat{S}\cong\mathcal{O}_{X\rtimes_\sigma\widehat{S}}\]
    under the assumption that $(k_X\rtimes_\sigma{\rm id},k_A\rtimes_\delta{\rm id})$ is covariant. By applying this to group actions, we extend Theorem 2.10 of \cite{HaoNg} (see Corollary \ref{Cor1.to.Main.Thm}) to any locally compact groups. It is however, not so easy to determine whether the representation $(k_X\rtimes_\sigma{\rm id},k_A\rtimes_\delta{\rm id})$ is covariant or not without understanding the ideal $J_{X\rtimes_\sigma\widehat{S}}$ of $A\rtimes_\delta\widehat{S}$. Actually, $J_{X\rtimes_\sigma\widehat{S}}$ is not known even for the commutative case with some exceptions. For an action $(\gamma,\alpha)$ of a locally compact group $G$, it was shown that $J_{X\rtimes_\gamma G}=J_X\rtimes_\alpha G$ if $G$ is amenable (\cite[Proposition~2.7]{HaoNg}), which was the most difficult part in proving the main result of \cite{HaoNg} as was mentioned in the introductory section there. Recently, the same has been shown for a discrete group $G$ if $G$ is exact or if the action $\alpha$ has Exel's Approximation Property (\cite[Theorem~5.5]{BKQR}). However, we only know in general that $J_{X\rtimes_\sigma\widehat{S}}$ contains the ideal of $A\rtimes_\delta\widehat{S}$ generated by the image $\delta_\iota(J_X)$ (Proposition~\ref{Ext.Main.Thm.of.KQR2}). We bypass the difficulty regarding the ideal $J_{X\rtimes_\sigma\widehat{S}}$ by focusing our attention on the $(A\otimes\mathscr{K})$-multiplier correspondence $(M_{A\otimes\mathscr{K}}(X\otimes\mathscr{K}),M(A\otimes\mathscr{K}))$ in which the $C^*$-cor\-re\-spond\-ence $(X\rtimes_\sigma\widehat{S},A\rtimes_\delta\widehat{S})$ lies. This leads us to two equivalent conditions that the representation $(k_X\rtimes_\sigma{\rm id},k_A\rtimes_\delta{\rm id})$ is covariant (Theorem~\ref{Equiv.of.cov.}). From these equivalent conditions, we see that $(k_X\rtimes_\sigma{\rm id},k_A\rtimes_\delta{\rm id})$ is covariant if, in particular, $J_{X\rtimes_\sigma\widehat{S}}$ is generated by $\delta_\iota(J_X)$ or the left action $\varphi_A$ is injective (Corollary~\ref{Sec.5.Cor.varphi.inj.}), even though we do not know the ideal $J_{X\rtimes_\sigma\widehat{S}}$ explicitly.

    In Appendix~\ref{App.A}, we generalize \cite[Corollary~3.4]{APT} to our $C^*$-cor\-re\-spond\-ence setting, and then show that there exists a one-to-one correspondence between actions of a locally compact group $G$ and coactions of the commutative Hopf $C^*$-algebra $C_0(G)$ on a $C^*$-cor\-re\-spond\-ence. In Appendix~\ref{App.B}, we prove a $C^*$-cor\-re\-spond\-ence analogue to the well-known fact that $\mathcal{L}_A(A\otimes\mathcal{H})=M(A\otimes\mathcal{K}(\mathcal{H}))$ for a $C^*$-algebra $A$ and a Hilbert space $\mathcal{H}$. Using this, we recover from our construction of $(X\rtimes\widehat{S},A\rtimes\widehat{S})$ the crossed product correspondences $(X\rtimes_rG,A\rtimes_rG)$ for actions of locally compact groups $G$ given in \cite{EKQR}.

\section{Preliminaries}\label{Sec.2}

    In this section, we review some definitions and properties related to multiplier correspondences, Cuntz-Pimsner algebras, and reduced crossed products by reduced Hopf $C^*$-algebra coactions. Our references include \cite{EKQR,DKQ,Kat,BS}. %We also fix some notations and provide several results that will be used in the subsequent sections.

    \subsection{$C^*$-correspondences}

    Let $A$ be a $C^*$-algebra. For two (right) Hilbert $A$-modules $X$ and $Y$, we denote by $\mathcal{L}(X,Y)=\mathcal{L}_A(X,Y)$ the Banach space of all adjointable operators from $X$ to $Y$, and by $\mathcal{K}(X,Y)=\mathcal{K}_A(X,Y)$ the closed subspace of $\mathcal{L}(X,Y)$ generated by the operators $\theta_{\xi,\eta}$:
        \[\theta_{\xi,\eta}(\eta')=\xi\cdot\langle\eta,\eta'\rangle_A\quad(\xi\in Y,\ \eta,\eta'\in X).\]
    We simply write $\mathcal{L}(X)$ and $\mathcal{K}(X)$ when $X=Y$; in this case $\mathcal{L}(X)$ becomes a maximal unital $C^*$-algebra containing $\mathcal{K}(X)$ as an essential ideal.

    A \emph{$C^*$-correspondence} over a $C^*$-algebra $A$ is a Hilbert $A$-module $X$ equipped with a homomorphism $\varphi_A:A\rightarrow\mathcal{L}(X)$, called the \emph{left action}. We use the notation $(X,A)$ of \cite{KQRo2} to refer to a $C^*$-correspondence $X$ over $A$. We say that $(X,A)$ is \emph{nondegenerate} if $\varphi_A$ is nondegenerate, that is, $\overline{\varphi_A(A)X}=X$.

    Every $C^*$-algebra $A$ has the natural structure of a nondegenerate $C^*$-cor\-re\-spond\-ence over itself, called the \emph{identity correspondence}, with the left action of identifying $A$ with $\mathcal{K}(A)$ (p.\ 368 of \cite{Kat}). When we regard a $C^*$-algebra as a $C^*$-cor\-re\-spond\-ence, we always mean this $C^*$-cor\-re\-spond\-ence. We also regard a Hilbert space $\mathcal{H}$ as a Hilbert $\mathbb{C}$-module such that the inner product is conjugate linear in the first variable; it is a $C^*$-cor\-re\-spond\-ence with $\varphi_{\mathbb{C}}(1)=1_{\mathcal{L}(\mathcal{H})}$.

    \subsection{Multiplier correspondences}

    Throughout the paper, we restrict ourselves to nondegenerate $C^*$-cor\-re\-spond\-ences, which in particular allows us to consider their multiplier correspondences that are a generalization of multiplier $C^*$-algebras.

    Let $(X,A)$ be a $C^*$-correspondence, and let $M(X):=\mathcal{L}(A,X)$. The \emph{multiplier correspondence} of $X$ is the $C^*$-cor\-re\-spond\-ence $M(X)$ over the multiplier algebra $M(A)$ with the Hilbert $M(A)$-module operations
        \begin{equation}\label{Prel.module.ops}
            m\cdot a:=m a,\quad\langle m,n\rangle_{M(A)}:=m^*n
        \end{equation}
    and the left action
        \begin{equation}\label{Prel.module.ops.2}
            \varphi_{M(A)}(a)m:=\overline{\varphi_A}(a)m
        \end{equation}
    for $m,n\in M(X)$ and $a\in M(A)$, where $\overline{\varphi_A}$ is the strict extension of the nondegenerate homomorphism $\varphi_A$ and $ma$, $m^*n$, and $\varphi_{M(A)}(a)m$ mean the compositions $m\circ a$, $m^*\circ n$, and $\varphi_{M(A)}(a)\circ m$, respectively. The identification of $X$ with $\mathcal{K}(A,X)$, in which each $\xi\in X$ is regarded as the operator $A\ni a\mapsto\xi\cdot a\in X$, gives an embedding of $X$ into $M(X)$, and we will always regard $X$ as a subspace of $M(X)$ through this embedding. Note that $\mathcal{K}(M(X))\subseteq M(\mathcal{K}(X))$ nondegenerately.

    The \emph{strict topology} on $M(X)$ is the locally convex topology such that a net $\{m_i\}$ in $M(X)$ converges strictly to 0 if and only if for $T\in\mathcal{K}(X)$ and $a\in A$, the nets $\{Tm_i\}$ and $\{m_i\cdot a\}$ both converge in norm to 0. It can be shown that $M(X)$ is the strict completion of $X$.

    Let $(X,A)$ and $(Y,B)$ be $C^*$-correspondences. A pair
        \[(\psi,\pi):(X,A)\rightarrow(M(Y),M(B))\]
    of a linear map $\psi:X\rightarrow M(Y)$ and a homomorphism $\pi:A\rightarrow M(B)$ is called a \emph{correspondence homomorphism} if
    \begin{itemize}
        \item[\rm(i)] $\psi(\varphi_A(a)\xi)=\varphi_{M(B)}(\pi(a))\,\psi(\xi)$ for $a\in A$ and $\xi\in X$;
        \item[\rm(ii)] $\pi(\langle\xi,\eta\rangle_A)=\langle\psi(\xi),\psi(\eta)\rangle_{M(B)}$ for $\xi,\eta\in X$.
    \end{itemize}
    It is automatic that $\psi(\xi\cdot a)=\psi(\xi)\cdot\pi(a)$ (see the comment below \cite[Definition~2.3]{Kat2}). We say that $(\psi,\pi)$ is \emph{injective} if $\pi$ is injective; if so $\psi$ is isometric. We also say that $(\psi,\pi)$ is \emph{nondegenerate} if
        $\overline{\psi(X)\cdot B}=Y$ and $\overline{\pi(A)B}=B$.
    In this case, $(\psi,\pi)$ extends uniquely to a strictly continuous correspondence homomorphism
        \[(\overline{\psi},\overline{\pi}):(M(X),M(A))\rightarrow(M(Y),M(B))\]
    (\cite[Theorem 1.30]{EKQR}). Note that if $(\psi,\pi)$ is injective, then so is $(\overline{\psi},\overline{\pi})$.

    A correspondence homomorphism $(\psi,\pi):(X,A)\rightarrow(M(Y),M(B))$ determines a (unique) homomorphism
        $\psi^{(1)}:\mathcal{K}(X)\rightarrow\mathcal{K}(M(Y))\subseteq M(\mathcal{K}(Y))$
    such that
        \[\psi^{(1)}(\theta_{\xi,\eta})=\psi(\xi)\psi(\eta)^*\quad(\xi,\eta\in X)\]
    (see for example \cite[Definition 2.4]{Kat2} and the comment below it). If $(\psi,\pi)$ is nondegenerate, then so is $\psi^{(1)}$; it is straightforward to verify that
        \begin{equation}\label{Pre.psi.1}
            \psi(T\xi)=\overline{\psi^{(1)}}(T)\psi(\xi),\quad\overline{\psi^{(1)}}(mn^*)=\overline{\psi}(m)\overline{\psi}(n)^*
        \end{equation}
    for $T\in\mathcal{L}(X)$, $\xi\in X$, and $m,n\in M(X)$. The first relation of \eqref{Pre.psi.1} shows that $\overline{\psi^{(1)}}$ is injective whenever $\psi$ is injective.

    \subsection{Tensor product correspondences}

    In this paper, the tensor products of Hilbert modules always mean the exterior ones (\cite[pp.\ 34--35]{Lance}). The tensor products of $C^*$-algebras are the minimal ones.

    Let $(X_1,A_1)$ and $(X_2,A_2)$ be $C^*$-cor\-re\-spon\-dences. We will freely use the following identification
        \[\mathcal{K}(X_1\otimes X_2)=\mathcal{K}(X_1)\otimes\mathcal{K}(X_2)\]
    via $\theta_{\xi_1\otimes\xi_2,\,\eta_1\otimes\eta_2}=\theta_{\xi_1,\,\eta_1}\otimes\theta_{\xi_2,\,\eta_2}$. Equipped with the left action
        $\varphi_{A_1\otimes A_2}=\varphi_{A_1}\otimes\varphi_{A_2},$
    the tensor product $X_1\otimes X_2$ then becomes a $C^*$-cor\-re\-spon\-dence over $A_1\otimes A_2$, called the \emph{tensor product correspondence}.

    Let $(\psi_i,\pi_i):(X_i,A_i)\rightarrow(M(Y_i),M(B_i))$ $(i=1,2)$ be correspondence homomorphisms. Then there exists a unique correspondence homomorphism
         \[(\psi_1\otimes\psi_2,\pi_1\otimes\pi_2):(X_1\otimes X_2,A_1\otimes A_2)\rightarrow(M(Y_1\otimes Y_2),M(B_1\otimes B_2))\]
    such that $(\psi_1\otimes\psi_2)(\xi_1\otimes\xi_2)=\psi_1(\xi_1)\otimes\psi_2(\xi_2)$. If both $(\psi_i,\pi_i)$ are nondegenerate then $(\psi_1\otimes\psi_2,\pi_1\otimes\pi_2)$ is also nondegenerate (\cite[Proposition 1.38]{EKQR}).

%    \begin{rmk}\label{psi.hat.1}\rm
%    It can be easily checked that
%        \[\overline{(\psi_1\otimes\psi_2)^{(1)}}=\overline{\psi_1^{(1)}\otimes\psi_2^{(1)}}\]
%    for two nondegenerate correspondence homomorphisms $(\psi_1,\pi_1)$ and $(\psi_2,\pi_2)$.
%    \end{rmk}

    \subsection{Cuntz-Pimsner algebras}

    Let $(X,A)$ be a $C^*$-cor\-re\-spond\-ence, and let
        \[J_X:=\varphi_A^{-1}(\mathcal{K}(X))\cap\{a\in A:ab=0\ {\rm for}\ b\in\ker\varphi_A\}.\]
    Then $J_X$ is characterized as the largest ideal of $A$ which is mapped injectively into $\mathcal{K}(X)$ by $\varphi_A$. A correspondence homomorphism $(\psi,\pi):(X,A)\rightarrow(B,B)$ into an identity correspondence $(B,B)$ is called a \emph{representation} of $(X,A)$ on $B$ and denoted simply by $(\psi,\pi):(X,A)\rightarrow B$. We say that $(\psi,\pi)$ is \emph{covariant} if
        \[\psi^{(1)}(\varphi_A(a))=\pi(a)\quad(a\in J_X)\]
    (\cite[Definition 3.4]{Kat}). We denote by $(k_X,k_A)$ the universal covariant representation of $(X,A)$ which is known to be injective (\cite[Proposition~4.9]{Kat}). The \emph{Cuntz-Pimsner algebra} $\mathcal{O}_X$ is the $C^*$-algebra generated by $k_X(X)$ and $k_A(A)$. Note that the embedding $k_A:A\hookrightarrow\mathcal{O}_X$ is nondegenerate by our standing assumption that $(X,A)$ is nondegenerate. From the universality of $(k_X,k_A)$, if $(\psi,\pi)$ is a covariant representation of $(X,A)$ on $B$, there exists a unique homomorphism $\psi\times\pi:\mathcal{O}_X\rightarrow B$ called the \emph{integrated form} of $(\psi,\pi)$ such that $\psi=(\psi\times\pi)\circ k_X$ and $\pi=(\psi\times\pi)\circ k_A$.

    A representation $(\psi,\pi)$ of $(X,A)$ is said to \emph{admit a gauge action} if there exists an action $\beta$ of the unit circle $\mathbb{T}$ on the $C^*$-subalgebra generated by $\psi(X)$ and $\pi(A)$ such that $\beta_z(\psi(\xi))=z\psi(\xi)$ and $\beta_z(\pi(a))=\pi(a)$ for $z\in\mathbb{T}$, $\xi\in X$, and $a\in A$. The universal covariant representation $(k_X,k_A)$ clearly admits a gauge action. The \emph{gauge invariant uniqueness theorem} \cite[Theorem 6.4]{Kat} asserts that an injective covariant representation $(\psi,\pi)$ admits a gauge action if and only if $\psi\times\pi$ is injective.

    \subsection{$C$-multiplier correspondences}

    We recall from \cite[Appendix~A]{DKQ} basic definitions and facts on $C$-multiplier correspondences. We also fix some notations and provide results that will be used in the subsequent sections.

    Let $(X,A)$ be a $C^*$-correspondence, $C$ be a $C^*$-algebra, and $\kappa:C\rightarrow M(A)$ be a nondegenerate homomorphism. The \emph{$C$-multiplier correspondence} $M_C(X)$ of $X$ and the \emph{$C$-multiplier algebra} $M_C(A)$ of $A$ are defined by
        \[M_C(X):=\{m\in M(X):\varphi_{M(A)}(\kappa(C))m\cup m\cdot\kappa(C)\subseteq X\},\]
        \[M_C(A):=\{a\in M(A):\kappa(C)a\cup a\kappa(C)\subseteq A\}.\]
    Under the restriction of the operations \eqref{Prel.module.ops} and \eqref{Prel.module.ops.2}, $(M_C(X),M_C(A))$ becomes a $C^*$-cor\-re\-spond\-ence (\cite[Lemma A.9.(2)]{DKQ}).

    \begin{notation}\rm
        We mean by $M_A(X)$ the $A$-multiplier correspondence
            \[M_A(X)=\big\{m\in M(X):\varphi_A(A)m\subseteq X\big\}\]
        determined by $\kappa={\rm id}_A$, and by $M_A(\mathcal{K}(X))$ the $A$-multiplier algebra
            \[M_A(\mathcal{K}(X))=\{m\in M(\mathcal{K}(X)):\varphi_A(A)m\cup m\varphi_A(A)\subseteq\mathcal{K}(X)\}\]
        determined by the left action $\varphi_A$.
    \end{notation}

    Note that $\mathcal{K}(M_A(X))\subseteq M_A(\mathcal{K}(X))$ (\cite[Lemma A.9.(3)]{DKQ}).

    The \emph{$C$-strict topology} on $M_C(X)$ is the locally convex topology whose neighborhood system at 0 is generated by the family $\{m:\|\varphi_{M(A)}(\kappa(c))m\|\leq\epsilon\}$ and $\{m:\|m\cdot\kappa(c)\|\leq\epsilon\}$ ($c\in C$, $\epsilon>0$). The $C$-strict topology is stronger than the relative strict topology on $M_C(X)$, and $M_C(X)$ is the $C$-strict completion of $X$. Likewise, the \emph{$C$-strict topology} on $M_C(A)$ is the locally convex topology defined by the family of seminorms $\|\kappa(c)\cdot\|+\|\cdot\kappa(c)\|$ ($c\in C$).

    \begin{rmk}\label{Prel.C-strict.top.}\rm
        Let $(X,A)$ be a $C^*$-correspondence and $M_{C_i}(X)$ be the $C_i$-multiplier correspondence determined by a nondegenerate homomorphism $\kappa_i:C_i\rightarrow M(A)$ ($i=1,2$). It is clear that if $\kappa_1(C_1)$ is nondegenerately contained in $M(\kappa_2(C_2))(\subseteq M(A))$, then $M_{C_1}(X)\subseteq M_{C_2}(X)$ and the $C_1$-strict topology on $M_{C_1}(X)$ is stronger than the relative $C_2$-strict topology. In particular, $M_C(X)\subseteq M_A(X)$ and the $C$-strict topology is stronger than the relative $A$-strict topology.
    \end{rmk}

    For a not necessarily nondegenerate correspondence homomorphism, we still have an extension by \cite[Proposition A.11]{DKQ}. Let $(\psi,\pi):(X,A)\rightarrow(M_D(Y),M_D(B))$ be a correspondence homomorphism, where $(M_D(Y),M_D(B))$ is a $D$-mul\-ti\-pli\-er correspondence determined by a nondegenerate homomorphism $\kappa_D:D\rightarrow M(B)$. Assume that $\kappa_C:C\rightarrow M(A)$ and $\lambda:C\rightarrow M(\kappa_D(D))(\subseteq M(B))$ are nondegenerate homomorphisms such that $\pi(\kappa_C(c)a)=\lambda(c)\pi(a)$ for $c\in C$ and $a\in A$. Then $(\psi,\pi)$ extends uniquely to a  $C$-strict to $D$-strictly continuous correspondence homomorphism
        $(\overline{\psi},\overline{\pi}):(M_C(X),M_C(A))\rightarrow (M_D(Y),M_D(B)),$
    where $(M_C(X),M_C(A))$ is the $C$-multiplier correspondence determined by $\kappa_C$.

    \begin{rmks}\label{Prel.Str.Ext.}\rm
        (1) If $(\psi,\pi)$ is nondegenerate, then every $C$-strict to $D$-strictly continuous extension of $(\psi,\pi)$ coincides with the restriction of its usual strict extension.

        (2) Suppose that $\overline{\psi}_i:M_{C_i}(X)\rightarrow M_{D_i}(Y)$ are $C_i$-strict to $D_i$-strictly continuous extensions $(i=1,2)$. If $M_{C_1}(X)\subseteq M_{C_2}(X)$ and $M_{D_1}(Y)\subseteq M_{D_2}(Y)$ and if the $C_1$-strict and $D_1$-strict topologies are stronger than the relative $C_2$-strict and $D_2$-strict topologies, respectively, then $\overline{\psi}_1=\overline{\psi}_2|_{M_{C_1}(X)}$.
    \end{rmks}

    We will frequently need the following special form of \cite[Proposition A.11]{DKQ}.

    \begin{thm}[{\cite[Corollary A.14]{DKQ}}]\label{DKQ,Cor.A.14}
        Let $(\psi,\pi):(X,A)\rightarrow B$ be a representation with $\pi$ nondegenerate. Then

        {\rm(i)} $(\psi,\pi)$ extends uniquely to an $A$-strictly continuous correspondence homomorphism
                \[(\overline{\psi},\overline{\pi}):(M_A(X),M(A))\rightarrow M_A(B),\]
                where $M_A(B)$ is the $A$-multiplier algebra determined by $\pi$.

        {\rm(ii)} $\psi^{(1)}:\mathcal{K}(X)\rightarrow B$ extends uniquely to an $A$-strictly continuous homomorphism $\overline{\psi^{(1)}}:M_A(\mathcal{K}(X))\rightarrow M_A(B)$; moreover, \[\overline{\psi^{(1)}}=\overline{\psi}^{\,(1)}\] on $\mathcal{K}(M_A(X))$, that is, $\overline{\psi^{(1)}}(mn^*)=\overline{\psi}(m)\overline{\psi}(n)^*$ for $m,n\in M_A(X)$.
    \end{thm}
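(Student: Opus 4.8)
The plan is to prove Theorem~\ref{DKQ,Cor.A.14}, which I regard as a special case of the general extension result stated just above it (the reduction of \cite[Proposition~A.11]{DKQ}). The key observation is that a representation $(\psi,\pi):(X,A)\rightarrow B$ with $\pi$ nondegenerate \emph{is} a correspondence homomorphism into the identity correspondence $(B,B)=(M_B(B),M_B(B))$, so we may feed it into the general extension machinery. First I would identify the data: take $D=B$ with $\kappa_D=\id_B:B\rightarrow M(B)$, and take $C=A$ with $\kappa_C=\id_A:A\rightarrow M(A)$, so that the $C$-multiplier correspondence determined by $\kappa_C$ is exactly $(M_A(X),M_A(A))=(M_A(X),M(A))$ (note $M_A(A)=M(A)$ since $A$ is an ideal of $M(A)$). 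For the compatibility map $\lambda:A\rightarrow M(\kappa_D(B))=M(B)$ I would simply take $\lambda=\pi$ itself, which is nondegenerate by hypothesis; the required relation $\pi(\kappa_C(a)a')=\lambda(a)\pi(a')$ for $a,a'\in A$ then reads $\pi(aa')=\pi(a)\pi(a')$, which holds because $\pi$ is a homomorphism.

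With these choices, \cite[Proposition~A.11]{DKQ} immediately yields a unique $A$-strict to $B$-strictly continuous extension $(\overline{\psi},\overline{\pi}):(M_A(X),M(A))\rightarrow M_B(B)$. Here the target $B$-multiplier algebra $M_B(B)$ determined by $\kappa_D=\id_B$ is just $M(B)$, but the statement asks instead for the $A$-multiplier algebra $M_A(B)$ determined by $\pi$; so the second step is to check that $\overline{\pi}$ actually lands in $M_A(B)\subseteq M(B)$, and likewise $\overline{\psi}$ into the appropriate space. This is a matter of unwinding the definition: for $a\in M(A)$ we have $\overline{\pi}(a)$ approximated $A$-strictly by $\pi$ of elements of $A$, and since $\pi(A)\subseteq B$ while $\pi(A)\overline{\pi}(a),\ \overline{\pi}(a)\pi(A)\subseteq B$ by continuity, one gets $\overline{\pi}(a)\in M_A(B)$ as required. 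This proves part~(i).

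For part~(ii), the plan is to pass from the level of the module to the level of $\mathcal{K}(X)$ using the functorial assignment $\psi\mapsto\psi^{(1)}$ recalled in the preliminaries. Since $\psi^{(1)}:\mathcal{K}(X)\rightarrow B$ is itself a nondegenerate homomorphism into $B\subseteq M(B)$ (nondegeneracy of $\psi^{(1)}$ following from that of $(\psi,\pi)$, which holds as $\pi$ is nondegenerate), it extends $A$-strictly continuously to $\overline{\psi^{(1)}}:M_A(\mathcal{K}(X))\rightarrow M_A(B)$; the domain $M_A(\mathcal{K}(X))$ is exactly the $A$-multiplier algebra determined by $\varphi_A$, matching the notation fixed above. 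The only remaining content is the compatibility formula $\overline{\psi^{(1)}}=\overline{\psi}^{\,(1)}$ on $\mathcal{K}(M_A(X))$, i.e. $\overline{\psi^{(1)}}(mn^*)=\overline{\psi}(m)\overline{\psi}(n)^*$ for $m,n\in M_A(X)$. I would prove this by approximation: the second relation of \eqref{Pre.psi.1} gives $\psi^{(1)}(\xi\eta^*)=\psi(\xi)\psi(\eta)^*$ for $\xi,\eta\in X$, and since $X$ is $A$-strictly dense in $M_A(X)$ and $\mathcal{K}(M_A(X))\subseteq M_A(\mathcal{K}(X))$ (recorded just before Remark~\ref{Prel.C-strict.top.}), both sides of the desired identity are $A$-strict to norm continuous in each variable, so the formula extends from $X$ to $M_A(X)$ by taking limits.

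The step I expect to be the main obstacle is the joint continuity bookkeeping in part~(ii): one must verify that the map $(m,n)\mapsto\overline{\psi}(m)\overline{\psi}(n)^*$ and the map $mn^*\mapsto\overline{\psi^{(1)}}(mn^*)$ are each continuous from the $A$-strict topology on $M_A(X)$ (respectively $M_A(\mathcal{K}(X))$) to the relevant topology on $M_A(B)$, so that the density of $X$ and $\mathcal{K}(X)$ propagates the algebraic identity to the multiplier level. Because the $A$-strict topology is only defined through the seminorms coming from $\varphi_A(A)$-multiplication, care is needed to ensure that adjoints and products interact well with this one-sided approximation; the cleanest route is to reduce everything to the already-established uniqueness and continuity of the extension $(\overline{\psi},\overline{\pi})$ from part~(i) together with the strict continuity of the functor $(\cdot)^{(1)}$, rather than re-deriving estimates by hand. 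Since this theorem is cited as \cite[Corollary~A.14]{DKQ}, I would also remark that the full verification is carried out there and that our contribution is only to record the precise form we will use repeatedly in Sections~\ref{Sec.4} and~\ref{Sec.5}.
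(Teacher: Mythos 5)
You should note first that the paper does not prove this theorem at all --- it is quoted verbatim from \cite[Corollary~A.14]{DKQ} --- so your proposal can only be measured against the extension machinery of \cite[Proposition~A.11]{DKQ} recalled just above the statement, which is indeed the route you take. The genuine gap is in your part (ii): the parenthetical claim that $\psi^{(1)}$ is nondegenerate, ``following from that of $(\psi,\pi)$, which holds as $\pi$ is nondegenerate,'' is false. In this paper nondegeneracy of $(\psi,\pi)$ requires \emph{both} $\overline{\pi(A)B}=B$ \emph{and} $\overline{\psi(X)\cdot B}=B$, and the second condition does not follow from the first. A concrete counterexample is the universal covariant representation of the graph correspondence of the graph with two vertices $v,w$ and one edge from $v$ to $w$: there $A=\mathbb{C}^2$, $X=\mathbb{C}$, $\mathcal{O}_X\cong M_2(\mathbb{C})$ with $k_A(\delta_v)=e_{11}$, $k_A(\delta_w)=e_{22}$, $k_X(X)=\mathbb{C}e_{21}$, so $k_A$ is unital (hence nondegenerate) while $k_X(X)\mathcal{O}_X$ and $k_X^{(1)}(\mathcal{K}(X))\mathcal{O}_X$ are both the linear span of $e_{21}$ and $e_{22}$, a proper subspace. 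Thus $\psi^{(1)}$ is typically degenerate, the ``usual'' strict-extension argument you invoke is unavailable, and your proof of (ii) collapses. This is not a peripheral slip: the whole point of Theorem~\ref{DKQ,Cor.A.14} (and of the $C$-multiplier framework) is to extend maps like $\psi$ and $\psi^{(1)}$ that are \emph{not} nondegenerate --- e.g.\ $k_X\otimes\id_C$ in the Notation following the theorem --- by exploiting the nondegeneracy of the auxiliary maps $\varphi_A$ (the standing assumption on $(X,A)$) and $\pi$ (the hypothesis). The correct argument applies \cite[Proposition~A.11]{DKQ} to $\psi^{(1)}:\mathcal{K}(X)\rightarrow B$, viewed as a homomorphism of identity correspondences, with $C=A$, $\kappa_C=\varphi_A:A\rightarrow M(\mathcal{K}(X))$ (nondegenerate), $D=A$, $\kappa_D=\lambda=\pi$; the compatibility $\psi^{(1)}(\varphi_A(a)k)=\pi(a)\psi^{(1)}(k)$ is immediate on generators $\theta_{\xi,\eta}$ from $\psi(\varphi_A(a)\xi)=\pi(a)\psi(\xi)$.

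Two lesser points. In part (i), your instantiation $D=B$, $\kappa_D=\id_B$ yields only an $A$-strict to \emph{ordinary} strictly continuous extension into $M(B)$, whereas the theorem asserts continuity (and uniqueness) for the finer $A$-strict topology on $M_A(B)$ determined by $\pi$ (cf.\ Remark~\ref{Prel.C-strict.top.}); this stronger form is what the paper actually uses later, for instance in Lemma~\ref{Lemma.for.Ext.Main.Thm.of.KQR2} and Theorem~\ref{induced coactions on O_X}, where nets are followed $A$-strictly on both sides. Your after-the-fact check that the image lies in $M_A(B)$ repairs the algebra but not the topology; taking $D=A$, $\kappa_D=\pi$ from the start gives exactly the stated conclusion with no patching. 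Finally, your plan for the identity $\overline{\psi^{(1)}}=\overline{\psi}^{\,(1)}$ on $\mathcal{K}(M_A(X))$ rests on an unestablished ``strict continuity of the functor $(\cdot)^{(1)}$'' and ends by deferring to the citation, so as a self-contained proof it is circular; the honest argument is an approximation in one variable at a time, using Cohen factorization ($X=X\cdot A=\overline{\varphi_A(A)X}$) to turn $A$-strict convergence in $M_A(X)$ into norm convergence after one-sided multiplication, together with \eqref{Pre.psi.1} on elements of $X$.
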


    \begin{notation}\rm
        Let $(X,A)$ be a $C^*$-correspondence and $C$ be a $C^*$-algebra. Consider the representation $(k_X\otimes{\rm id}_C,k_A\otimes{\rm id}_C):(X\otimes C,A\otimes C)\rightarrow\mathcal{O}_X\otimes C$. Since $k_A\otimes{\rm id}_C$ is nondegenerate, $k_X\otimes{\rm id}_C$ extends to the $(A\otimes C)$-strictly continuous map
            \begin{equation*}\label{Prel.Str.Ext.k.times.id}
                \overline{k_X\otimes{\rm id}_C}:M_{A\otimes C}(X\otimes C)\rightarrow M_{A\otimes C}(\mathcal{O}_X\otimes C)
            \end{equation*}
        by Theorem \ref{DKQ,Cor.A.14}.(i). Throughout the paper, we mean by $\overline{k_X\otimes{\rm id}_C}$ this extension, and by $M_{A\otimes C}(\mathcal{O}_X\otimes C)$ the $(A\otimes C)$-multiplier algebra determined by $k_A\otimes{\rm id}_C$. On the other hand, $(M_C(X\otimes C),M_C(A\otimes C))$ is the $C$-multiplier correspondence determined by the embedding $C\hookrightarrow M(A\otimes C)$ onto the last factor.
    \end{notation}

    For an ideal $I$ of a $C^*$-algebra $B$, let
        \[M(B;I):=\{m\in M(B): mB\cup Bm\subseteq I\}.\]
    By \cite[Lemma 2.4.(i)]{KQRo1}, $M(B;I)$ is the strict closure of $I$ in $M(B)$.

    \begin{lem}\label{Lemma.for.Ext.Main.Thm.of.KQR2 0}
        Let $(X,A)$ be a $C^*$-correspondence. Then the ideal $J_{M_A(X)}$ is contained in the strict closure of $J_X$, that is,
            \[J_{M_A(X)}\subseteq M(A;J_X).\]
    \end{lem}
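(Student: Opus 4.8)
The plan is to unwind the two defining conditions of $J_{M_A(X)}$ for the correspondence $(M_A(X),M(A))$ and transport them down to $J_X$ for $(X,A)$. Fix $m\in J_{M_A(X)}$; by definition this means $\varphi_{M(A)}(m)\in\mathcal{K}(M_A(X))$ and $mc=0$ for every $c\in\ker\varphi_{M(A)}$, where $\varphi_{M(A)}$ denotes the left action of $(M_A(X),M(A))$. Since $M(A;J_X)=\{m\in M(A):mA\cup Am\subseteq J_X\}$, it suffices to show that $ma\in J_X$ and $am\in J_X$ for every $a\in A$; note that these elements indeed lie in $A$ because $A$ is an ideal of $M(A)$.

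First I would extract the compactness information. Using the inclusion $\mathcal{K}(M_A(X))\subseteq M_A(\mathcal{K}(X))$ of \cite[Lemma~A.9.(3)]{DKQ} together with the identity $\theta_{p,q}^{M_A(X)}(\xi)=(pq^*)\xi$ for $p,q\in M_A(X)$ and $\xi\in X$, one identifies $\varphi_{M(A)}(m)$ with $\overline{\varphi_A}(m)\in M_A(\mathcal{K}(X))$. The defining property of $M_A(\mathcal{K}(X))$ then yields $\varphi_A(A)\,\overline{\varphi_A}(m)\subseteq\mathcal{K}(X)$ and $\overline{\varphi_A}(m)\,\varphi_A(A)\subseteq\mathcal{K}(X)$. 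Since $\overline{\varphi_A}$ is multiplicative and restricts to $\varphi_A$ on $A$, for $a\in A$ we obtain $\varphi_A(ma)=\overline{\varphi_A}(m)\varphi_A(a)\in\mathcal{K}(X)$ and $\varphi_A(am)=\varphi_A(a)\overline{\varphi_A}(m)\in\mathcal{K}(X)$. This settles the $\varphi_A^{-1}(\mathcal{K}(X))$ part of membership in $J_X$ for both $ma$ and $am$.

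Next I would handle the annihilator condition, the key point being $\ker\varphi_A\subseteq\ker\varphi_{M(A)}$: if $b\in A$ satisfies $\varphi_A(b)=0$, then $\varphi_{M(A)}(b)m'=\overline{\varphi_A}(b)m'=\varphi_A(b)\circ m'=0$ for every $m'\in M_A(X)$, so $\varphi_{M(A)}(b)=0$. Hence $mb=0$ for all $b\in\ker\varphi_A$. As $\ker\varphi_A$ is an ideal of $A$, for $a\in A$ and $b\in\ker\varphi_A$ we have $ab\in\ker\varphi_A$, so $(ma)b=m(ab)=0$; and directly $(am)b=a(mb)=0$. Thus $ma$ and $am$ both satisfy the annihilator condition, giving $ma,am\in J_X$ and therefore $m\in M(A;J_X)$.

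The module and operator manipulations are routine; the step demanding the most care is the first one, where the compactness of $\varphi_{M(A)}(m)$ on the Hilbert $M(A)$-module $M_A(X)$ must be correctly matched, via $\mathcal{K}(M_A(X))\subseteq M_A(\mathcal{K}(X))$, with the multiplier condition on $\overline{\varphi_A}(m)$ acting on $X$. This is precisely the place where the two different correspondence structures are reconciled; the kernel comparison $\ker\varphi_A\subseteq\ker\varphi_{M(A)}$ is the other point to record, though it follows at once from $\overline{\varphi_A}|_A=\varphi_A$.
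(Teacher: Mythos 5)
Your proof is correct and follows essentially the same route as the paper's: both rest on the inclusion $\mathcal{K}(M_A(X))\subseteq M_A(\mathcal{K}(X))$ together with the kernel comparison $\ker\varphi_A\subseteq\ker\varphi_{M(A)}$. The paper merely compresses the argument by showing $AJ_{M_A(X)}\subseteq J_X$ at the level of products of sets (leaving the identification of $\varphi_{M(A)}(m)$ with $\overline{\varphi_A}(m)$ and the two-sided check implicit), whereas you spell these steps out elementwise.
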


    \begin{proof}
        We need to show that the ideal $AJ_{M_A(X)}$ is contained in $J_X$. By definition, we have
            \[\varphi_A(AJ_{M_A(X)})\subseteq\varphi_A(A)\mathcal{K}(M_A(X))\subseteq\varphi_A(A)M_A(\mathcal{K}(X))\subseteq\mathcal{K}(X).\]
        We also have
            \[AJ_{M_A(X)}\ker\varphi_A\subseteq J_{M_A(X)}\ker\varphi_{M(A)}=0.\]
        Consequently, $AJ_{M_A(X)}\subseteq J_X$.
    \end{proof}

    The next lemma, contained in the proof of \cite[Lemma 2.5]{KQRo2}, will be useful in proving Theorem \ref{induced coactions on O_X}, Proposition \ref{Ext.Main.Thm.of.KQR2}, and Theorem \ref{Equiv.of.cov.}.

    \begin{lem}\label{Lemma.for.Ext.Main.Thm.of.KQR2}
        Let $(X,A)$ be a $C^*$-correspondence and $C$ be a $C^*$-algebra. Then
        \begin{equation}\label{Covariancy.of.the.Strict.Ext.1}
            \overline{(k_X\otimes{\rm id}_C)^{(1)}}\circ\varphi_{M(A\otimes C)}=\overline{k_A\otimes{\rm id}_C}
        \end{equation}
        holds on $M(A\otimes C;J_X\otimes C)$, that is, the diagram
        \[ \xymatrix{ M_{A\otimes C}(\mathcal{K}(X\otimes C)) \ar[drr]^-{\qquad\overline{(k_X\otimes{\rm id}_C)^{(1)}}} && \\
            M(A\otimes C;J_X\otimes C) \ar[u]^-{\varphi_{M(A\otimes C)}} \ar[rr]_-{\overline{k_A\otimes{\rm id}_C}} &&
            M_{A\otimes C}(\mathcal{O}_X\otimes C)
        }\]
        commutes.
    \end{lem}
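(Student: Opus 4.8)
The plan is to verify the asserted identity first on the ideal $J_X\otimes C$ and then to transport it to the full domain $M(A\otimes C;J_X\otimes C)$ by a strict-continuity argument, using that $M(A\otimes C;J_X\otimes C)$ is precisely the strict closure of $J_X\otimes C$ in $M(A\otimes C)$ (\cite[Lemma 2.4.(i)]{KQRo1}).

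For the base case I would first record the identification $(k_X\otimes{\rm id}_C)^{(1)}=k_X^{(1)}\otimes{\rm id}_C$ under $\mathcal{K}(X\otimes C)=\mathcal{K}(X)\otimes C$, which follows by checking it on the rank-one operators $\theta_{\xi\otimes c,\,\eta\otimes d}=\theta_{\xi,\eta}\otimes\theta_{c,d}$. Since $\varphi_A(J_X)\subseteq\mathcal{K}(X)$ by the definition of $J_X$, we have $\varphi_{A\otimes C}(J_X\otimes C)\subseteq\mathcal{K}(X)\otimes C=\mathcal{K}(X\otimes C)$, so for $w\in J_X\otimes C$ the left-hand side of \eqref{Covariancy.of.the.Strict.Ext.1} reduces to $(k_X\otimes{\rm id}_C)^{(1)}(\varphi_{A\otimes C}(w))$, because $\varphi_{M(A\otimes C)}$ extends $\varphi_{A\otimes C}$ and $\overline{(k_X\otimes{\rm id}_C)^{(1)}}$ agrees with $(k_X\otimes{\rm id}_C)^{(1)}$ on $\mathcal{K}(X\otimes C)$; the right-hand side is $(k_A\otimes{\rm id}_C)(w)$. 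On an elementary tensor $a\otimes c$ with $a\in J_X$ these are $k_X^{(1)}(\varphi_A(a))\otimes c$ and $k_A(a)\otimes c$, which coincide by the covariance of the universal representation $(k_X,k_A)$. By linearity and continuity the identity then holds on all of $J_X\otimes C$.

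Next I would check that the diagram is well defined and set up the continuity needed for the extension. The key computation is an absorption identity: for $m\in M(A\otimes C;J_X\otimes C)$ and $z\in A\otimes C$ one has $zm\in J_X\otimes C$, whence $\varphi_{A\otimes C}(z)\,\varphi_{M(A\otimes C)}(m)=\varphi_{A\otimes C}(zm)\in\mathcal{K}(X\otimes C)$, and likewise on the right. This simultaneously shows $\varphi_{M(A\otimes C)}(m)\in M_{A\otimes C}(\mathcal{K}(X\otimes C))$, so that the left vertical arrow lands where claimed, and, when applied to a strictly convergent net, that the restriction of $\varphi_{M(A\otimes C)}$ to $M(A\otimes C;J_X\otimes C)$ is continuous from the strict topology into the $(A\otimes C)$-strict topology on $M_{A\otimes C}(\mathcal{K}(X\otimes C))$. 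Composing with the $(A\otimes C)$-strictly continuous homomorphism $\overline{(k_X\otimes{\rm id}_C)^{(1)}}$ of Theorem \ref{DKQ,Cor.A.14}.(ii), the left-hand side of \eqref{Covariancy.of.the.Strict.Ext.1} becomes strict to $(A\otimes C)$-strict continuous on $M(A\otimes C;J_X\otimes C)$; the right-hand side $\overline{k_A\otimes{\rm id}_C}$ is strict to $(A\otimes C)$-strict continuous by Theorem \ref{DKQ,Cor.A.14}.(i). Since the $(A\otimes C)$-strict topology on $M_{A\otimes C}(\mathcal{O}_X\otimes C)$ is Hausdorff and $J_X\otimes C$ is strictly dense in $M(A\otimes C;J_X\otimes C)$, the equality established on $J_X\otimes C$ propagates to the whole domain.

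The main obstacle I anticipate is the bookkeeping among the three different strict topologies, namely the ordinary strict topology on $M(A\otimes C)$ in which $J_X\otimes C$ is dense, and the $(A\otimes C)$-strict topologies on $M_{A\otimes C}(\mathcal{K}(X\otimes C))$ and on $M_{A\otimes C}(\mathcal{O}_X\otimes C)$. In particular, $\varphi_{M(A\otimes C)}$ is not well behaved as a map on all of $M(A\otimes C)$, and the point is that restricting it to $M(A\otimes C;J_X\otimes C)$ rescues continuity precisely because of the absorption $zm\in J_X\otimes C$; verifying this compatibility carefully is the crux of the argument.
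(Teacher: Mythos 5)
Your proof is correct and takes essentially the same route as the paper's: verify \eqref{Covariancy.of.the.Strict.Ext.1} on $J_X\otimes C$ via the covariance of $(k_X,k_A)$, then propagate it to all of $M(A\otimes C;J_X\otimes C)$ using the strict density of $J_X\otimes C$ (from \cite[Lemma 2.4.(i)]{KQRo1}) together with the strict to $(A\otimes C)$-strict continuity of both compositions. Your absorption identity $zm\in J_X\otimes C$, which gives both that $\varphi_{M(A\otimes C)}$ lands in $M_{A\otimes C}(\mathcal{K}(X\otimes C))$ and that it is suitably continuous there, is exactly the content the paper compresses into ``by definition, the vertical map makes sense and is $(A\otimes C)$-strictly continuous.''
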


    \begin{proof}
        By definition, the vertical map makes sense and is $(A\otimes C)$-strictly continuous. Also, Theorem \ref{DKQ,Cor.A.14}.(ii) says that $(k_X\otimes{\rm id}_C)^{(1)}$ extends $(A\otimes C)$-strictly to the homomorphism $\overline{(k_X\otimes{\rm id}_C)^{(1)}}$ indicated by the lower right arrow. Hence the composition on the left side of \eqref{Covariancy.of.the.Strict.Ext.1} is well-defined on $M(A\otimes C;J_X\otimes C)$ and $(A\otimes C)$-strictly continuous. On the other hand, the horizontal map is the restriction of the usual strict extension $\overline{k_A\otimes{\rm id}_C}$ and $(A\otimes C)$-strictly continuous. Since \eqref{Covariancy.of.the.Strict.Ext.1} is valid on $J_X\otimes C$, the conclusion now follows by $(A\otimes C)$-strict continuity and the fact that $J_X\otimes C$ is $(A\otimes C)$-strictly dense in $M(A\otimes C;J_X\otimes C)$.
    \end{proof}

    Recall from \cite[Definition 12.4.3]{BO} the following terminology. Let $A$ and $C$ be $C^*$-algebras and $J$ be a closed subspace of $A$. The triple $(J,A,C)$ is said to \emph{satisfy the slice map property} if the space
        \[ F(J,A,C)=\{x\in A\otimes C: ({\rm id}\otimes\omega)(x)\in J\ {\rm for}\ \omega\in C^*\} \]
    equals the norm closure $J\otimes C$ of the algebraic tensor product $J\odot C$ in $A\otimes C$.

    \begin{rmks}\label{S.M.P.}\rm
        (1) If $J$ is an ideal of $A$, then $(J,A,C)$ satisfies the slice map property if and only if the sequence
            \[ 0\longrightarrow J\otimes C\longrightarrow A\otimes C\longrightarrow (A/J)\otimes C \longrightarrow0 \]
        is exact; this is the case if $A$ is locally reflexive or $C$ is exact (see below \cite[Definition~12.4.3]{BO}).

        (2) Let $\mathcal{H}$ be a Hilbert space. If $C$ is a $C^*$-subalgebra of $\mathcal{L}(\mathcal{H})$, then $F(J,A,C)$ equals the norm closure of the following space
            \[ \{x\in A\otimes C: ({\rm id}\otimes\omega)(x)\in J\ {\rm for}\ \omega\in\mathcal{L}(\mathcal{H})_*\}. \]
    \end{rmks}

    \begin{cor}\label{Sec.3.prop.for.thm.in.Sec.5}
        Let $(X,A)$ be a $C^*$-correspondence and $C$ be a $C^*$-algebra. Suppose that $(J_X,A,C)$ satisfies the slice map property. Then
            \[J_{X\otimes C}=J_X\otimes C.\]
        Furthermore,
            \[J_{M_{A\otimes C}(X\otimes C)}\subseteq M(A\otimes C;J_X\otimes C)\]
        and the injective representation
        \[(\overline{k_X\otimes{\rm id}_C},\overline{k_A\otimes{\rm id}_C}):(M_{A\otimes C}(X\otimes C),M(A\otimes C))\rightarrow M_{A\otimes C}(\mathcal{O}_X\otimes C)\]
        is covariant.
    \end{cor}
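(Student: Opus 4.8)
The statement comprises three assertions: the ideal identity $J_{X\otimes C}=J_X\otimes C$, the multiplier inclusion $J_{M_{A\otimes C}(X\otimes C)}\subseteq M(A\otimes C;J_X\otimes C)$, and the injectivity together with covariance of the extended representation (which is already a correspondence homomorphism into the identity correspondence on $M_{A\otimes C}(\mathcal{O}_X\otimes C)$ by Theorem~\ref{DKQ,Cor.A.14}.(i)). The plan is to establish the ideal identity first, since it is the heart of the matter, and then to obtain the other two assertions by feeding it into Lemmas~\ref{Lemma.for.Ext.Main.Thm.of.KQR2 0} and~\ref{Lemma.for.Ext.Main.Thm.of.KQR2}. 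Throughout I identify $\mathcal{K}(X\otimes C)=\mathcal{K}(X)\otimes C$ and $\varphi_{A\otimes C}=\varphi_A\otimes\id_C$, and I write $F(J_X,A,C)$ for the slice-map space, so that the hypothesis reads $F(J_X,A,C)=J_X\otimes C$.

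For the inclusion $J_X\otimes C\subseteq J_{X\otimes C}$ I use the characterization of $J_{X\otimes C}$ as the largest ideal of $A\otimes C$ mapped injectively into $\mathcal{K}(X\otimes C)$ by $\varphi_{A\otimes C}$. Since $\varphi_{A\otimes C}(J_X\otimes C)\subseteq\varphi_A(J_X)\otimes C\subseteq\mathcal{K}(X)\otimes C$, and since $\varphi_{A\otimes C}|_{J_X\otimes C}=(\varphi_A|_{J_X})\otimes\id_C$ is injective, being the minimal tensor product of the injective homomorphism $\varphi_A|_{J_X}$ with $\id_C$, the ideal $J_X\otimes C$ has the defining property, whence the inclusion. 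For the reverse inclusion I exploit the slice map property: it suffices to show $(\id\otimes\omega)(z)\in J_X$ for every $z\in J_{X\otimes C}$ and $\omega\in C^*$, for then $z\in F(J_X,A,C)=J_X\otimes C$. Fixing such $z$ and $\omega$, I verify the two conditions defining $J_X$. First, because $\varphi_{A\otimes C}(z)\in\mathcal{K}(X)\otimes C$ and slicing commutes with the left action, $\varphi_A((\id\otimes\omega)(z))=(\id\otimes\omega)(\varphi_{A\otimes C}(z))\in\mathcal{K}(X)$. Second, for $b\in\ker\varphi_A$ one has $\ker\varphi_A\otimes C\subseteq\ker\varphi_{A\otimes C}$, so $z\cdot(b\otimes c)=0$ for all $c\in C$; applying $\id\otimes\omega$ to this identity gives $(\id\otimes\omega_c)(z)\,b=0$, where $\omega_c=\omega(\,\cdot\,c)$, and letting $c$ run through an approximate identity of $C$ (so that $(\id\otimes\omega_c)(z)\to(\id\otimes\omega)(z)$ in norm) yields $(\id\otimes\omega)(z)\,b=0$. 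Thus $(\id\otimes\omega)(z)\in J_X$, completing the ideal identity.

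The second assertion is then immediate: applying Lemma~\ref{Lemma.for.Ext.Main.Thm.of.KQR2 0} to the correspondence $(X\otimes C,A\otimes C)$ gives $J_{M_{A\otimes C}(X\otimes C)}\subseteq M(A\otimes C;J_{X\otimes C})$, and the ideal identity rewrites the right-hand side as $M(A\otimes C;J_X\otimes C)$. For the third assertion, injectivity follows because $k_A\otimes\id_C$ is injective and nondegenerate, so its $(A\otimes C)$-strict extension $\overline{k_A\otimes\id_C}$ is injective. For covariance I combine the second assertion with Lemma~\ref{Lemma.for.Ext.Main.Thm.of.KQR2}: given $z\in J_{M_{A\otimes C}(X\otimes C)}$, the defining property of this ideal gives $\varphi_{M(A\otimes C)}(z)\in\mathcal{K}(M_{A\otimes C}(X\otimes C))$, where Theorem~\ref{DKQ,Cor.A.14}.(ii) identifies $\overline{k_X\otimes\id_C}^{\,(1)}$ with $\overline{(k_X\otimes\id_C)^{(1)}}$; and since $z\in M(A\otimes C;J_X\otimes C)$ by the second assertion, \eqref{Covariancy.of.the.Strict.Ext.1} yields $\overline{(k_X\otimes\id_C)^{(1)}}(\varphi_{M(A\otimes C)}(z))=\overline{k_A\otimes\id_C}(z)$, which is precisely the covariance relation.

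I expect the reverse inclusion in the ideal identity to be the main obstacle: it is the only place where the slice map property is genuinely used, and the delicate point is verifying the orthogonality-to-$\ker\varphi_A$ condition after slicing. The functionals one controls directly are the $\omega_c$, and the approximate-identity argument (equivalently, Cohen factorization of $C^*$ over $C$) is what is needed to pass back to an arbitrary $\omega\in C^*$; once this is in hand, the remaining two assertions are a formal assembly of the cited lemmas.
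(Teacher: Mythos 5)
Your proposal is correct and takes essentially the same route as the paper: the paper simply outsources the identity $J_{X\otimes C}=J_X\otimes C$ to the two parts of the proof of \cite[Lemma~2.6]{KQRo2}, which are exactly your maximality argument for $J_X\otimes C\subseteq J_{X\otimes C}$ and your slice-map (approximate identity) argument for the converse. The remaining two assertions are then assembled from Lemma~\ref{Lemma.for.Ext.Main.Thm.of.KQR2 0}, Theorem~\ref{DKQ,Cor.A.14}.(ii), and Lemma~\ref{Lemma.for.Ext.Main.Thm.of.KQR2} in precisely the way you do.
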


    \begin{proof}
        We always have $J_{X\otimes C}\supseteq J_X\otimes C$ as shown in the first part of the proof of \cite[Lemma 2.6]{KQRo2}. We thus only need to show the converse $J_{X\otimes C}\subseteq F(J_X,A,C)=J_X\otimes C$. But, this can be done in the same way as the second part of the proof of \cite[Lemma 2.6]{KQRo2}, and then the first assertion of the corollary follows. Lemma~\ref{Lemma.for.Ext.Main.Thm.of.KQR2 0} then verifies the second assertion on the inclusion. Finally, since $\varphi_{M(A\otimes C)}$ maps $J_{M_{A\otimes C}(X\otimes C)}$ into $\mathcal{K}(M_{A\otimes C}(X\otimes C))$ on which
            \[\overline{(k_X\otimes{\rm id})^{(1)}}=\overline{k_X\otimes{\rm id}_C}^{\,(1)}\]
        by Theorem \ref{DKQ,Cor.A.14}.(ii), the representation is covariant by Lemma \ref{Lemma.for.Ext.Main.Thm.of.KQR2}.
    \end{proof}

    \begin{cor}\label{O_(XotimesB)=O_XotimesB}
        Under the same hypothesis of Corollary \ref{Sec.3.prop.for.thm.in.Sec.5}, the injective representation
            \[(k_X\otimes{\rm id}_C,k_A\otimes{\rm id}_C):(X\otimes C,A\otimes C)\rightarrow\mathcal{O}_X\otimes C\]
        is covariant and the integrated form $(k_X\otimes{\rm id}_C)\times(k_A\otimes{\rm id}_C):\mathcal{O}_{X\otimes C}\rightarrow\mathcal{O}_X\otimes C$ is a surjective isomorphism.
    \end{cor}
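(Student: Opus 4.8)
The plan is to verify covariance first and then identify the integrated form as an isomorphism by combining a surjectivity argument with the gauge-invariant uniqueness theorem. For covariance, I would begin with the equality $J_{X\otimes C}=J_X\otimes C$ supplied by Corollary~\ref{Sec.3.prop.for.thm.in.Sec.5}. Since $J_X\otimes C$ is an ideal of $A\otimes C$, it is contained in its own strict closure $M(A\otimes C;J_X\otimes C)$, and Lemma~\ref{Lemma.for.Ext.Main.Thm.of.KQR2} tells us that the covariance relation $\overline{(k_X\otimes\id_C)^{(1)}}\circ\varphi_{M(A\otimes C)}=\overline{k_A\otimes\id_C}$ holds there. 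Restricting this relation to $J_X\otimes C$ and observing that on $\mathcal{K}(X\otimes C)$ and on $A\otimes C$ the strict extensions $\overline{(k_X\otimes\id_C)^{(1)}}$, $\varphi_{M(A\otimes C)}$, and $\overline{k_A\otimes\id_C}$ reduce to the original maps $(k_X\otimes\id_C)^{(1)}$, $\varphi_{A\otimes C}$, and $k_A\otimes\id_C$, I obtain $(k_X\otimes\id_C)^{(1)}(\varphi_{A\otimes C}(a))=(k_A\otimes\id_C)(a)$ for every $a\in J_X\otimes C=J_{X\otimes C}$. This is exactly the covariance of $(k_X\otimes\id_C,k_A\otimes\id_C)$; I regard transferring covariance from the multiplier level down to the correct ideal $J_{X\otimes C}$ as the main technical point, and it is precisely here that the slice map hypothesis enters through $J_{X\otimes C}=J_X\otimes C$.

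Covariance produces the integrated form $(k_X\otimes\id_C)\times(k_A\otimes\id_C)\colon\mathcal{O}_{X\otimes C}\to\mathcal{O}_X\otimes C$ via the universal property. For surjectivity, let $B$ denote its image, the $C^*$-subalgebra of $\mathcal{O}_X\otimes C$ generated by all $k_X(\xi)\otimes c$ and $k_A(a)\otimes c$. Because elementary tensors are dense in $\mathcal{O}_X\otimes C$ and the $*$-subalgebra generated by $k_X(X)\cup k_A(A)$ is dense in $\mathcal{O}_X$, it suffices to show that $g_1\cdots g_n\otimes c$ lies in $B$ for every word $g_1,\dots,g_n$ in $k_X(X)\cup k_X(X)^*\cup k_A(A)$ and every $c\in C$. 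Taking an approximate identity $\{f_\mu\}$ of $C$, the element $(g_1\otimes f_\mu)\cdots(g_{n-1}\otimes f_\mu)(g_n\otimes c)=g_1\cdots g_n\otimes f_\mu^{\,n-1}c$ belongs to $B$ and converges in norm to $g_1\cdots g_n\otimes c$; since $B$ is closed, the claim follows and $B=\mathcal{O}_X\otimes C$.

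For injectivity I would invoke the gauge-invariant uniqueness theorem \cite[Theorem~6.4]{Kat}, which requires the representation to be injective and to admit a gauge action. Injectivity holds because $k_A$ is injective, being part of the injective universal representation \cite[Proposition~4.9]{Kat}, and injectivity is preserved under the minimal tensor product, so $k_A\otimes\id_C$ is injective. A gauge action is obtained from the gauge action $\beta$ of $\mathbb{T}$ on $\mathcal{O}_X$: the tensor product action $\beta\otimes\id_C$ is a point-norm continuous action of $\mathbb{T}$ on $\mathcal{O}_X\otimes C$ with $(\beta_z\otimes\id_C)(k_X(\xi)\otimes c)=z\,(k_X(\xi)\otimes c)$ and $(\beta_z\otimes\id_C)(k_A(a)\otimes c)=k_A(a)\otimes c$, which is precisely the gauge compatibility required of $(k_X\otimes\id_C,k_A\otimes\id_C)$. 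The theorem then forces $(k_X\otimes\id_C)\times(k_A\otimes\id_C)$ to be injective, and combined with the surjectivity established above this exhibits it as a surjective isomorphism $\mathcal{O}_{X\otimes C}\cong\mathcal{O}_X\otimes C$.
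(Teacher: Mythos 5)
Your proof is correct and follows essentially the same route as the paper: covariance is deduced from the slice-map-property identification $J_{X\otimes C}=J_X\otimes C$ together with Lemma~\ref{Lemma.for.Ext.Main.Thm.of.KQR2}, and injectivity comes from the gauge action $\beta\otimes{\rm id}_C$ via \cite[Theorem 6.4]{Kat}. The only cosmetic difference is that you restrict Lemma~\ref{Lemma.for.Ext.Main.Thm.of.KQR2} directly to $J_X\otimes C\subseteq M(A\otimes C;J_X\otimes C)$, whereas the paper routes the covariance through the inclusion $J_{X\otimes C}\subseteq J_{M_{A\otimes C}(X\otimes C)}$ and the multiplier-level covariance of Corollary~\ref{Sec.3.prop.for.thm.in.Sec.5}; likewise your approximate-identity argument just spells out the surjectivity the paper calls clear.
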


    \begin{proof}
        Generally we have $J_Y\subseteq J_{M_B(Y)}$ for a $C^*$-cor\-re\-spond\-ence $(Y,B)$ since $J_Y$ is an ideal of $M(B)$ and is mapped injectively into $\mathcal{K}(Y)\subseteq\mathcal{K}(M_B(Y))$ by $\varphi_{M(B)}$. Hence $J_{X\otimes C}\subseteq J_{M_{A\otimes C}(X\otimes C)}$, and therefore $(k_X\otimes{\rm id}_C,k_A\otimes{\rm id}_C)$ is covariant by Corollary \ref{Sec.3.prop.for.thm.in.Sec.5}. The integrated form is clearly surjective. Since $(k_X,k_A)$ admits a gauge action, and hence so does $(k_X\otimes{\rm id}_C,k_A\otimes{\rm id}_C)$, the integrated form must be injective by \cite[Theorem 6.4]{Kat}.
    \end{proof}

    \subsection{Reduced and dual reduced Hopf $C^*$-algebras}

    By a \emph{Hopf $C^*$-algebra} we always mean a bisimplifiable Hopf $C^*$-algebra in the sense of \cite{BS}, that is, a pair $(S,\Delta)$ of a $C^*$-algebra $S$ and a nondegenerate homomorphism $\Delta:S\rightarrow M(S\otimes S)$ called the \emph{comultiplication} of $S$ satisfying
    \begin{itemize}
        \item[(i)] $\overline{\Delta\otimes{\rm id}}\circ\Delta=\overline{{\rm id}\otimes\Delta}\circ\Delta$;
        \item[(ii)] $\overline{\Delta(S)(1_{M(S)}\otimes S)}=S\otimes S=\overline{\Delta(S)(S\otimes1_{M(S)})}$.
    \end{itemize}
    Let $G$ be a locally compact group. Then $(C_0(G),\Delta_G)$ is a Hopf $C^*$-algebra with the comultiplication $\Delta_G(f)(r,s)=f(rs)$ for $f\in C_0(G)$ and $r,s\in G$. The full group $C^*$-algebra $C^*(G)$ equipped with the comultiplication given by $r\mapsto r\otimes r$ for $r\in G$ is also a Hopf $C^*$-algebra. The same is true for the reduced group $C^*$-algebra $C^*_r(G)$ such that the canonical surjection $\lambda:C^*(G)\rightarrow C^*_r(G)$ is a morphism in the sense of \cite{BS} (also see \cite[Example 4.2.2]{Timm}).

    Let $\mathcal{H}$ be a Hilbert space. A unitary operator $V$ acting on $\mathcal{H}\otimes\mathcal{H}$ is said to be \emph{multiplicative} if it satisfies the pentagonal relation $V_{12}V_{13}V_{23}=V_{23}V_{12}$, where we use the leg-numbering notations $V_{ij}$ such that $V_{12}\in\mathcal{L}(\mathcal{H}\otimes\mathcal{H}\otimes\mathcal{H})$ denotes the unitary $V\otimes1$ for example (see \cite[p.\ 428]{BS}). For each functional $\omega\in\mathcal{L}(\mathcal{H})_*$, define the operators $L(\omega)$ and $\rho(\omega)$ in $\mathcal{L}(\mathcal{H})$ by
        \[L(\omega)=\overline{\omega\otimes{\rm id}}(V),\quad\rho(\omega)=\overline{{\rm id}\otimes\omega}(V),\]
    where the maps $\overline{\omega\otimes{\rm id}}$ and $\overline{{\rm id}\otimes\omega}$ denote the usual strict extension to the multiplier algebra $M(\mathcal{K}(\mathcal{H})\otimes\mathcal{K}(\mathcal{H}))(=\mathcal{L}(\mathcal{H}\otimes\mathcal{H}))$. The \emph{reduced algebra} $S_V$ and the \emph{dual reduced algebra} $\widehat{S}_V$ are defined as the following norm closed subspaces of $\mathcal{L}(\mathcal{H})$:
        \[S_V=\overline{\{L(\omega):\omega\in\mathcal{L}(\mathcal{H})_*\}},\quad
        \widehat{S}_V=\overline{\{\rho(\omega):\omega\in\mathcal{L}(\mathcal{H})_*\}}.\]
    They are known to be nondegenerate subalgebras of $\mathcal{L}(\mathcal{H})$ (\cite[Prop\-o\-si\-tion~1.4]{BS}).

    A multiplicative unitary $V$ acting on $\mathcal{H}\otimes\mathcal{H}$ is said to be \emph{well-behaved} if both $S_V$ and $\widehat{S}_V$ are Hopf $C^*$-algebras with the comultiplications $\Delta_V(s)=V(s\otimes1)V^*$ and $\widehat{\Delta}_V(x)=V^*(1\otimes x)V$
%        \begin{equation}\label{Prel.comul.}
%            \Delta_V(s)=V(s\otimes1)V^*,\quad\widehat{\Delta}_V(x)=V^*(1\otimes x)V
%        \end{equation}
    for $s\in S$ and $x\in\widehat{S}$, and $V\in M(\widehat{S}\otimes S)$ (\cite[Definition~7.2.6.i)]{Timm}).

    \begin{rmk}\rm
        When we consider a well-behaved multiplicative unitary $V$, we will not need the last property $V\in M(\widehat{S}\otimes S)$. In fact, we only need the property that $V$ gives rise to two Hopf $C^*$-algebras $S_V$ and $\widehat{S}_V$. %We use the terminology of well-behavedness just because we do not want to define a new terminology.
        It should be stressed though that many important and significant Hopf $C^*$-algebras come from well-behaved multiplicative unitaries the class of which includes those with regularity \cite{BS}, manageability \cite{Wo4} and modularity \cite{SW}. In particular, locally compact quantum groups \cite{KV} are the Hopf $C^*$-algebras arising from well-behaved multiplicative unitaries.
    \end{rmk}

    For a locally compact group $G$, let $W_G$ and $\widehat{W}_G$ be the regular multiplicative unitaries acting on $L^2(G)\otimes L^2(G)$ by
        \[(W_G\xi)(r,s)=\xi(r,r^{-1}s),\quad(\widehat{W}_G\xi)(r,s)=\xi(sr,s)\]
    for $\xi\in C_c(G\times G)$ and $r,s\in G$. It can be shown that $S_{W_G}=C^*_r(G)=\widehat{S}_{\widehat{W}_G}$ as Hopf $C^*$-algebras. Let $\mu_G$ and $\check{\mu}_G$ be the nondegenerate embeddings $C_0(G)\hookrightarrow\mathcal{L}(L^2(G))$ given by
    \begin{equation}\label{Prel.pi.and.U}
        \big(\mu_G(f)h\big)(r)=f(r)h(r),\quad\big(\check{\mu}_G(f)h\big)(r)=f(r^{-1})h(r)
    \end{equation}
    for $h\in C_c(G)$. Then $\mu_G$ and $\check{\mu}_G$ are isomorphisms from the Hopf $C^*$-algebra $C_0(G)$ onto $\widehat{S}_{W_G}$ and $S_{\widehat{W}_G}$, respectively (see for example \cite[Example 9.3.11]{Timm}).

\subsection{Reduced crossed products}

    By a \emph{coaction} of a Hopf $C^*$-algebra $(S,\Delta)$ on a $C^*$-algebra $A$ we always mean a nondegenerate homomorphism $\delta:A\rightarrow M(A\otimes S)$ such that
    \begin{itemize}
        \item[(i)] $\delta$ satisfies the \emph{coaction identity} $\overline{\delta\otimes{\rm id}}\circ\delta=\overline{{\rm id}\otimes\Delta}\circ\delta$;
        \item[(ii)] $\delta$ satisfies the \emph{coaction nondegeneracy} $\overline{\delta(A)(1_{M(A)}\otimes S)}=A\otimes S$.
    \end{itemize}

    Let $V$ be a well-behaved multiplicative unitary acting on $\mathcal{H}\otimes\mathcal{H}$. Let $\delta$ be a coaction of the reduced Hopf $C^*$-algebra $S_V$ on $A$, and $\iota_{S_V}:S_V\hookrightarrow M(\mathcal{K}(\mathcal{H}))$ be the inclusion map. We denote by $\delta_\iota$ the following composition
        \begin{equation}\label{Prel.delta.iota}
            \delta_\iota:=\overline{{\rm id}_A\otimes\iota_{S_V}}\circ\delta:A\rightarrow M(A\otimes\mathcal{K}(\mathcal{H})).
        \end{equation}
    The \emph{reduced crossed product} $A\rtimes_\delta\widehat{S}_V$ of $A$ by the coaction $\delta$ of $S_V$ is defined to be the following norm closed subspace of $M(A\otimes\mathcal{K}(\mathcal{H}))$
        \[A\rtimes_\delta\widehat{S}_V=\overline{\delta_\iota(A)(1_{M(A)}\otimes\widehat{S}_V)},\]
    where $1_{M(A)}\otimes\widehat{S}_V$ denotes the image of the canonical embedding $\widehat{S}_V\hookrightarrow M(A\otimes\mathcal{K}(\mathcal{H}))$. By \cite[Lemma 7.2]{BS}, $A\rtimes_\delta\widehat{S}_V$ is a $C^*$-algebra.

    \begin{rmk}\rm
        In the literature, the reduced crossed product $A\rtimes_\delta\widehat{S}_V$ is usually defined as a subalgebra of $\mathcal{L}_A(A\otimes\mathcal{H})$ which can be identified with $M(A\otimes\mathcal{K}(\mathcal{H}))$. For the arguments concerning multiplier correspondences and the relevant strict topologies, it seems to be more convenient to work with $M(A\otimes\mathcal{K}(\mathcal{H}))$ rather than $\mathcal{L}_A(A\otimes\mathcal{H})$. This leads us to regard $A\rtimes_\delta\widehat{S}_V$ as a subalgebra of $M(A\otimes\mathcal{K}(\mathcal{H}))$.
    \end{rmk}

    Let $G$ be a locally compact group and $A$ be a $C^*$-algebra. It is well-known that there exists a one-to-one correspondence between actions of $G$ on $A$ and coactions of $C_0(G)$ on $A$ such that each action $\alpha$ determines a coaction $\delta^\alpha$ given by $\delta^\alpha(a)(r)=\alpha_r(a)$ for $a\in A$ and $r\in G$. Moreover, if $\alpha:G\rightarrow Aut(A)$ is an action then the reduced crossed product $A\rtimes_{\alpha,r}G$ coincides with the crossed product $A\rtimes_{\delta^\alpha_G}\widehat{S}_{\widehat{W}_G}$ by the coaction
        \[\delta^\alpha_G=\overline{{\rm id}_A\otimes\check{\mu}_G}\circ\delta^\alpha:A\rightarrow M(A\otimes S_{\widehat{W}_G})\]
    when viewed as subalgebras of $M(A\otimes\mathcal{K}(L^2(G)))$ (see for example \cite[Chapter~9]{Timm}). %We will freely use these facts in the proof of Corollary~\ref{Cor1.to.Main.Thm}, Theorem~\ref{actions=coactions}, and Corollary \ref{Appendix.B.Cor.1} with no further explanation.

    A \emph{nondegenerate coaction} of $G$ on a $C^*$-algebra $A$ is an injective coaction $\delta$ of the Hopf $C^*$-algebra $C^*(G)$ on $A$ (\cite[Definition A.21]{EKQR}). Let
        \begin{equation}\label{Prel.coaction.of.G}
            \delta_\lambda:=\overline{{\rm id}_A\otimes\lambda}\circ\delta:A\rightarrow M(A\otimes C^*_r(G))=M(A\otimes S_{W_G}).
        \end{equation}
    The crossed product $A\rtimes_\delta G$ by $\delta$ is defined to be the reduced crossed crossed product $A\rtimes_{\delta_\lambda}\widehat{S}_{W_G}$ by $\delta_\lambda$ (\cite[Definition~A.39]{EKQR}).

\section{Coactions of Hopf $C^*$-algebras on $C^*$-correspondences}\label{Sec.3}

    In this section, we define a coaction $(\sigma,\delta)$ of a Hopf $C^*$-algebra on a $C^*$-cor\-re\-spond\-ence $(X,A)$, and prove that $(\sigma,\delta)$ induces a coaction on the associated Cuntz-Pimsner algebra $\mathcal{O}_X$ under a certain invariance condition (Theorem~\ref{induced coactions on O_X}). Recall that the $C^*$-cor\-re\-spon\-dences considered in this paper are always nondegenerate.

    \begin{defn}\label{DefofCoactions}\rm
        A \emph{coaction} of a Hopf $C^*$-algebra $(S,\Delta)$ on a $C^*$-cor\-re\-spond\-ence $(X,A)$ is a nondegenerate correspondence homomorphism
            \[(\sigma,\delta):(X,A)\rightarrow(M(X\otimes S),M(A\otimes S))\]
        such that
        \begin{itemize}
            \item[\rm(i)] $\delta$ is a coaction of $S$ on the $C^*$-algebra $A$;
            \item[\rm(ii)] $\sigma$ satisfies the \emph{coaction identity} $\overline{\sigma\otimes{\rm id}_S}\circ\sigma=\overline{{\rm id}_X\otimes\Delta}\circ\sigma$;
            \item[\rm(iii)] $\sigma$ satisfies the \emph{coaction nondegeneracy}
                \[ \overline{\varphi_{M(A\otimes S)}(1_{M(A)}\otimes S)\,\sigma(X)}=X\otimes S.\]
        \end{itemize}
    \end{defn}

    Note that the strict extensions $\overline{\sigma\otimes{\rm id}_S}$ and $\overline{{\rm id}_X\otimes\Delta}$ in (ii) are well-defined because the tensor product of two nondegenerate correspondence homomorphisms is also nondegenerate (\cite[Proposition 1.38]{EKQR}).

    \begin{rmks}\rm
        (1) It should be noted that
            \[\overline{\sigma(X)\cdot(1_{M(A)}\otimes S)}=X\otimes S.\]
        This follows by the same argument as Remark 2.11.(1) and 2.11.(2) of \cite{EKQR}. We then have $\sigma(X)\subseteq M_S(X\otimes S)\subseteq M_{A\otimes S}(X\otimes S)$.

        (2) A coaction on $(X,A)$ in our sense is a coaction on the Hilbert $A$-module $X$ in the sense of \cite[Definition~2.2]{BS}.
    \end{rmks}

    \begin{rmk}\label{Unify.act.coact.}\rm
        Let $G$ be a locally compact group and $(X,A)$ be a $C^*$-cor\-re\-spond\-ence. Theorem \ref{actions=coactions} says that every action of $G$ on $(X,A)$ in the sense of \cite[Definition 2.5]{EKQR} determines a coaction of the Hopf $C^*$-algebra $C_0(G)$ on $(X,A)$, and one can define in this way a one-to-one correspondence between actions of $G$ on $(X,A)$ and coactions of $C_0(G)$ on $(X,A)$. On the other hand, a nondegenerate coaction of $G$ \cite[Definition 2.10]{EKQR} is by definition a coaction $(\sigma,\delta)$ of the Hopf $C^*$-algebra $C^*(G)$ on $(X,A)$ such that $\delta$ is injective. Definition \ref{DefofCoactions} thus unifies the notions of actions and nondegenerate coactions of locally compact groups on $C^*$-cor\-re\-spond\-ences.
    \end{rmk}

    By Proposition 2.27 (Proposition 2.30, respectively) of \cite{EKQR}, an action (nondegenrate coaction, respectively) of a locally compact group $G$ on $(X,A)$ determines an action (coaction, respectively) of $G$ on $\mathcal{K}(X)$, and the left action $\varphi_A$ satisfies an equivariance condition. The next proposition generalizes this in the Hopf $C^*$-algebra setting. Recall that we identify $\mathcal{K}(X_1\otimes X_2)=\mathcal{K}(X_1)\otimes\mathcal{K}(X_2)$ for two Hilbert modules $X_1$ and $X_2$. In particular, if $(X,A)$ is a $C^*$-cor\-re\-spond\-ence and $C$ is a $C^*$-algebra then $\mathcal{K}(X\otimes C)=\mathcal{K}(X)\otimes C$.

    \begin{prop}\label{coaction on KX}
        Let $(\sigma,\delta)$ be a coaction of a Hopf $C^*$-algebra $S$ on a $C^*$-cor\-re\-spond\-ence $(X,A)$. Then the nondegenerate homomorphism
            \[\sigma^{(1)}:\mathcal{K}(X)\rightarrow M(\mathcal{K}(X\otimes S))= M(\mathcal{K}(X)\otimes S)\]
        is a coaction of $S$ on $\mathcal{K}(X)$ and the left action $\varphi_A$ is $\delta$-$\sigma^{(1)}$ equivariant, that is, $\overline{\varphi_A\otimes{\rm id}_S}\circ\delta=\overline{\sigma^{(1)}}\circ\varphi_A$. If $\delta$ is injective then so is $\sigma^{(1)}$.
    \end{prop}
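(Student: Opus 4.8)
The plan is to reduce each of the three coaction axioms for $\sigma^{(1)}$, together with the equivariance and the injectivity, to the corresponding property of $\sigma$, working on the generators $\theta_{\xi,\eta}$ of $\mathcal{K}(X)$ and using the two relations in \eqref{Pre.psi.1}. Since $(\sigma,\delta)$ is nondegenerate, $\sigma^{(1)}$ is already a nondegenerate homomorphism with $\sigma^{(1)}(\theta_{\xi,\eta}) = \sigma(\xi)\sigma(\eta)^*$, taking values in $\mathcal{K}(M(X\otimes S)) \subseteq M(\mathcal{K}(X\otimes S)) = M(\mathcal{K}(X)\otimes S)$.

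For the coaction identity I would exploit the naturality of the assignment $\psi\mapsto\psi^{(1)}$. Two compatibilities are needed. First, for composable nondegenerate correspondence homomorphisms one has $(\overline{\psi'}\circ\psi)^{(1)} = \overline{\psi'^{(1)}}\circ\psi^{(1)}$; this follows by evaluating on $\theta_{\xi,\eta}$ and applying the second relation of \eqref{Pre.psi.1} to $m=\psi(\xi)$, $n=\psi(\eta)$. Second, under the identification $\mathcal{K}(X\otimes S)=\mathcal{K}(X)\otimes S$ one checks on generators that $(\sigma\otimes{\rm id}_S)^{(1)} = \sigma^{(1)}\otimes{\rm id}_S$ and $({\rm id}_X\otimes\Delta)^{(1)} = {\rm id}_{\mathcal{K}(X)}\otimes\Delta$, the latter because $\Delta$ is a homomorphism. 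Applying $(\cdot)^{(1)}$ to the coaction identity $\overline{\sigma\otimes{\rm id}_S}\circ\sigma = \overline{{\rm id}_X\otimes\Delta}\circ\sigma$ and feeding in these identities, together with uniqueness of strict extensions (so that $\overline{(\sigma\otimes{\rm id}_S)^{(1)}}=\overline{\sigma^{(1)}\otimes{\rm id}_S}$ and likewise for $\Delta$), yields $\overline{\sigma^{(1)}\otimes{\rm id}_S}\circ\sigma^{(1)} = \overline{{\rm id}_{\mathcal{K}(X)}\otimes\Delta}\circ\sigma^{(1)}$.

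For equivariance I would combine condition (i) of a correspondence homomorphism, $\sigma(\varphi_A(a)\xi) = \varphi_{M(A\otimes S)}(\delta(a))\sigma(\xi)$, with the first relation of \eqref{Pre.psi.1}, $\sigma(\varphi_A(a)\xi) = \overline{\sigma^{(1)}}(\varphi_A(a))\sigma(\xi)$. Since $\varphi_{M(A\otimes S)}$ coincides with $\overline{\varphi_A\otimes{\rm id}_S}$ on $M(A\otimes S)$, the difference $\overline{\sigma^{(1)}}(\varphi_A(a)) - \overline{\varphi_A\otimes{\rm id}_S}(\delta(a))$ annihilates every $\sigma(\xi)$, hence vanishes on $\overline{\sigma(X)\cdot(A\otimes S)}$. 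The companion relation $\overline{\sigma(X)(1_{M(A)}\otimes S)}=X\otimes S$ recorded after Definition~\ref{DefofCoactions} gives $\overline{\sigma(X)\cdot(A\otimes S)}=X\otimes S$, so the difference is $0$, which is exactly $\overline{\varphi_A\otimes{\rm id}_S}\circ\delta = \overline{\sigma^{(1)}}\circ\varphi_A$. For coaction nondegeneracy the key identity is $\sigma^{(1)}(\theta_{\xi,\eta})(1_{M(A)}\otimes s) = \sigma(\xi)\,[\varphi_{M(A\otimes S)}(1_{M(A)}\otimes s^*)\sigma(\eta)]^*$, obtained from $\sigma(\eta)^*\varphi_{M(A\otimes S)}(1\otimes s) = [\varphi_{M(A\otimes S)}(1\otimes s^*)\sigma(\eta)]^*$. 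By axiom (iii), the vectors $\omega:=\varphi_{M(A\otimes S)}(1\otimes s^*)\sigma(\eta)$ are dense in $X\otimes S$, so $N:=\overline{\sigma^{(1)}(\mathcal{K}(X))(1_{M(A)}\otimes S)} = \overline{\operatorname{span}}\{\sigma(\xi)\omega^*:\xi\in X,\ \omega\in X\otimes S\}\subseteq\mathcal{K}(X\otimes S)$. For the reverse inclusion I would approximate an arbitrary $\zeta\in X\otimes S$ by sums $\sigma(\xi)\cdot(1_{M(A)}\otimes t)$ via the same companion relation; since $\sigma(\xi)(1\otimes t)\omega^* = \sigma(\xi)[\omega\cdot(1\otimes t^*)]^*$ is again of the generating form, every $\theta_{\zeta,\omega}=\zeta\omega^*$ lies in $N$, whence $N=\mathcal{K}(X\otimes S)=\mathcal{K}(X)\otimes S$.

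Finally, injectivity is immediate: if $\delta$ is injective then $(\sigma,\delta)$ is an injective correspondence homomorphism, so $\sigma$ is isometric, and the first relation of \eqref{Pre.psi.1} then forces $\overline{\sigma^{(1)}}$, hence its restriction $\sigma^{(1)}$, to be injective. I expect the main obstacle to be the coaction identity --- not the generator computation itself, but the bookkeeping required to push $(\cdot)^{(1)}$ through the strict extensions, i.e.\ justifying rigorously that the naturality identities survive passage to the multiplier level as $\overline{(\sigma\otimes{\rm id}_S)^{(1)}}=\overline{\sigma^{(1)}\otimes{\rm id}_S}$, for which one relies on the nondegeneracy of the relevant homomorphisms and the uniqueness of their strict extensions.
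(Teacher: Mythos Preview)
Your proposal is correct and follows essentially the same route as the paper. The equivariance and coaction-nondegeneracy arguments are the paper's arguments almost verbatim (the paper writes the nondegeneracy computation as a single chain of equalities using both $\overline{\varphi_{M(A\otimes S)}(1\otimes S)\sigma(X)}=X\otimes S$ and $\overline{\sigma(X)\cdot(1\otimes S)}=X\otimes S$, but the content is identical to yours), and the injectivity is the same appeal to $\sigma$ being isometric. The only substantive difference is the coaction identity: the paper simply cites \cite[Remark~2.8(a)]{BS0}, whereas you spell out the naturality argument $(\overline{\psi'}\circ\psi)^{(1)}=\overline{\psi'^{(1)}}\circ\psi^{(1)}$ together with the tensor identifications $(\sigma\otimes\mathrm{id}_S)^{(1)}=\sigma^{(1)}\otimes\mathrm{id}_S$ and $(\mathrm{id}_X\otimes\Delta)^{(1)}=\mathrm{id}_{\mathcal{K}(X)}\otimes\Delta$; this is exactly what that reference contains, and your bookkeeping concern about strict extensions is handled by the nondegeneracy of all maps involved plus uniqueness of strict extensions, as you note.
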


    \begin{proof}
        As noted in \cite[Remark~2.8.(a)]{BS0}, $\sigma^{(1)}$ satisfies the coaction identity. We also have
        \begin{equation}\label{Sec.3.Eqn.for.Coact.KX}
        \begin{aligned}
            \overline{\sigma^{(1)}(\mathcal{K}(X))(1_{M(\mathcal{K}(X))}\otimes S)}
            &= \overline{\sigma(X)\sigma(X)^*\varphi_{M(A\otimes S)}(1_{M(A)}\otimes S)} \\
            &= \overline{\sigma(X)\big(\varphi_{M(A\otimes S)}(1_{M(A)}\otimes S)\,\sigma(X)\big)^*} \\
            &= \overline{\sigma(X)\big((\sigma(X)\cdot(1_{M(A)}\otimes S))\cdot(1_{M(A)}\otimes S)\big)^*} \\
            &= \overline{\big(\sigma(X)\cdot(1_{M(A)}\otimes S)\big)\big(\sigma(X)\cdot(1_{M(A)}\otimes S)\big)^*} \\
            &= \overline{(X\otimes S)(X\otimes S)^*} = \mathcal{K}(X\otimes S),
        \end{aligned}
        \end{equation}
        in the third and fifth step of which we use the coaction nondegeneracy of $\sigma$. This shows that $\sigma^{(1)}$ satisfies the coaction nondegeneracy, and thus $\sigma^{(1)}$ is a coaction.

        The first relation of \eqref{Pre.psi.1} and the fact that $(\sigma,\delta)$ is a correspondence homomorphism yield
            \[\overline{\sigma^{(1)}}(\varphi_A(a))\,\sigma(\xi)=\sigma(\varphi_A(a)\xi)=\varphi_{M(A\otimes S)}(\delta(a))\,\sigma(\xi).\]
        for $a\in A$ and $\xi\in X$. Multiplying by $1_{M(A)}\otimes s$ on both end sides from the right gives
            \[\overline{\sigma^{(1)}}(\varphi_A(a))\big(\sigma(\xi)\cdot(1_{M(A)}\otimes s)\big)=\varphi_{M(A\otimes S)}(\delta(a))\big(\sigma(\xi)\cdot(1_{M(A)}\otimes s)\big)\]
        which leads to $\overline{\sigma^{(1)}}(\varphi_A(a))=\varphi_{M(A\otimes S)}(\delta(a))$ by the coaction nondegeneracy of $\sigma$. But $\varphi_{M(A\otimes S)}=\overline{\varphi_A\otimes{\rm id}_S}$ by definition, and then the $\delta$-$\sigma^{(1)}$ equivariancy of $\varphi_A$ follows. For the last assertion, see the comment below \cite[Lemma~2.4]{Kat}.
    \end{proof}

    \begin{defn}\label{Sec.3.delta.invariant}\rm
        Let $(\sigma,\delta)$ be a coaction of a Hopf $C^*$-algebra $S$ on a $C^*$-cor\-re\-spond\-ence $(X,A)$. We say that the ideal $J_X$ is \emph{weakly $\delta$-invariant} if
            \[\delta(J_X)(1_{M(A)}\otimes S)\subseteq J_X\otimes S.\]
    \end{defn}

    \begin{rmk}\label{WI.for.delta}\rm
        The coaction nondegeneracy of $\delta$ implies that $J_X$ is weakly $\delta$-invariant if and only if $\delta(J_X)(A\otimes S)\subseteq J_X\otimes S$, namely
            \[\delta(J_X)\subseteq M(A\otimes S;J_X\otimes S).\]
    \end{rmk}

    Under the assumption of the last inclusion in Remark \ref{WI.for.delta} with $S=C^*(G)$, it was proved in \cite[Proposition~3.1]{KQRo2} that every coaction of a locally compact group $G$ on $(X,A)$ induces a coaction of $G$ on the associated Cuntz-Pimsner algebra $\mathcal{O}_X$. Modifying the proof of \cite[Proposition 3.1]{KQRo2} we now prove the next theorem.

    \begin{thm}\label{induced coactions on O_X}
        Let $(\sigma,\delta)$ be a coaction of a Hopf $C^*$-algebra $S$ on a $C^*$-cor\-re\-spond\-ence $(X,A)$ such that the ideal $J_X$ is weakly $\delta$-invariant. Then the representation
        \[
            (\overline{k_X\otimes{\rm id}_S}\circ\sigma,\overline{k_A\otimes{\rm id}_S}\circ\delta):(X,A)\rightarrow M_{A\otimes S}(\mathcal{O}_X\otimes S)
        \]
        is covariant, and its integrated form $\zeta:=(\overline{k_X\otimes{\rm id}_S}\circ\sigma)\times(\overline{k_A\otimes{\rm id}_S}\circ\delta)$ is a coaction of $S$ on $\mathcal{O}_X$ such that the diagram
        \begin{equation}\label{Sec.3.diagram}
        \begin{gathered}
            \xymatrix{(X,A) \ar[rr]^-{(\sigma,\delta)} \ar[d]_-{(k_X,k_A)}
            && (M_{A\otimes S}(X\otimes S),M(A\otimes S)) \ar[d]^-{(\overline{k_X\otimes{\rm id}_S},\overline{k_A\otimes{\rm id}_S})} \\
                \mathcal{O}_X \ar[rr]_-{\zeta} && M_{A\otimes S}(\mathcal{O}_X\otimes S)}
        \end{gathered}
        \end{equation}
        commutes. If $\delta$ is injective then so is $\zeta$.
    \end{thm}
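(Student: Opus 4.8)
The plan is to verify covariance of the representation, then show its integrated form $\zeta$ is a genuine coaction satisfying the coaction identity and nondegeneracy, and finally handle injectivity. Let me think through each piece.

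For covariance: I need $(\overline{k_X\otimes\mathrm{id}_S}\circ\sigma)^{(1)}(\varphi_A(a)) = (\overline{k_A\otimes\mathrm{id}_S}\circ\delta)(a)$ for $a\in J_X$. The key should be to relate $(\overline{k_X\otimes\mathrm{id}_S}\circ\sigma)^{(1)}$ to $\overline{(k_X\otimes\mathrm{id}_S)^{(1)}}\circ\sigma^{(1)}$. Using the composition formula for the "$(1)$" construction (the generalized $\psi^{(1)}$), and Proposition \ref{coaction on KX} which gives $\overline{\sigma^{(1)}}\circ\varphi_A = \overline{\varphi_A\otimes\mathrm{id}_S}\circ\delta = \varphi_{M(A\otimes S)}\circ\delta$. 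So for $a\in J_X$, $\sigma^{(1)}(\varphi_A(a)) = \varphi_{M(A\otimes S)}(\delta(a))$, which lands in $M_{A\otimes S}(\mathcal{K}(X\otimes S))$ applied to $\delta(a)\in M(A\otimes S;J_X\otimes S)$ by weak $\delta$-invariance (Remark \ref{WI.for.delta}). Then apply Lemma \ref{Lemma.for.Ext.Main.Thm.of.KQR2}: on $M(A\otimes S;J_X\otimes S)$ one has $\overline{(k_X\otimes\mathrm{id}_S)^{(1)}}\circ\varphi_{M(A\otimes S)} = \overline{k_A\otimes\mathrm{id}_S}$. Chaining these gives exactly $(\overline{k_A\otimes\mathrm{id}_S}\circ\delta)(a)$. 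This is where weak $\delta$-invariance is essential: it guarantees $\delta(a)$ sits in the domain of the lemma.

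For the coaction identity and nondegeneracy of $\zeta$: commutativity of diagram \eqref{Sec.3.diagram} is immediate from the definition of the integrated form, since $\zeta\circ k_X = \overline{k_X\otimes\mathrm{id}_S}\circ\sigma$ and $\zeta\circ k_A = \overline{k_A\otimes\mathrm{id}_S}\circ\delta$. To verify $\overline{\zeta\otimes\mathrm{id}_S}\circ\zeta = \overline{\mathrm{id}\otimes\Delta}\circ\zeta$, I would check both sides agree on the generators $k_X(X)$ and $k_A(A)$, reducing the correspondence-level identity to the known coaction identities for $\sigma$ and $\delta$ from Definition \ref{DefofCoactions}. The care needed here is that composing $\zeta$ with strict extensions requires tracking which multiplier algebras and strict topologies are in play; I expect to invoke Remarks \ref{Prel.Str.Ext.} to identify the various extensions and confirm the strictly continuous maps agree on the generating set. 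For nondegeneracy $\overline{\zeta(\mathcal{O}_X)(1\otimes S)} = \mathcal{O}_X\otimes S$, I would use that $\mathcal{O}_X$ is generated by $k_X(X)$ and $k_A(A)$ together with the coaction nondegeneracy of $\sigma$ and $\delta$, and the nondegeneracy of the correspondence homomorphism $(\overline{k_X\otimes\mathrm{id}_S},\overline{k_A\otimes\mathrm{id}_S})$.

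The main obstacle I anticipate is the covariance verification, specifically the bookkeeping to justify that $(\overline{k_X\otimes\mathrm{id}_S}\circ\sigma)^{(1)} = \overline{(k_X\otimes\mathrm{id}_S)^{(1)}}\circ\sigma^{(1)}$ on the relevant domain, since $\sigma$ maps into multipliers rather than $X\otimes S$ itself and $k_X\otimes\mathrm{id}_S$ is only defined after an $(A\otimes S)$-strict extension. I would resolve this by working within the $(A\otimes S)$-multiplier correspondence $(M_{A\otimes S}(X\otimes S),M(A\otimes S))$ using Theorem \ref{DKQ,Cor.A.14}.(ii), which asserts that $\overline{(k_X\otimes\mathrm{id}_S)^{(1)}}$ agrees with the $(1)$-construction of the extended map on $\mathcal{K}(M_{A\otimes S}(X\otimes S))$, and by checking the desired identity first on elements of the form $\theta_{\xi,\eta}$ (where everything is computed via the second relation of \eqref{Pre.psi.1}) before passing to the strict closure. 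Once covariance is secured, the remaining coaction axioms and injectivity (which follows since $\delta$ injective forces $\zeta$ injective via the gauge-invariant uniqueness theorem, as $\zeta$ restricted through $k_A$ is injective and $(k_X,k_A)$ admits a gauge action compatible with $\zeta$) should follow by the standard density and universality arguments.
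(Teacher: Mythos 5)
Your proposal is correct and follows essentially the same route as the paper's proof: covariance via the composition identity of Theorem~\ref{DKQ,Cor.A.14}.(ii), the equivariance $\overline{\sigma^{(1)}}\circ\varphi_A=\varphi_{M(A\otimes S)}\circ\delta$, weak $\delta$-invariance in the form $\delta(J_X)\subseteq M(A\otimes S;J_X\otimes S)$ of Remark~\ref{WI.for.delta}, and Lemma~\ref{Lemma.for.Ext.Main.Thm.of.KQR2}, followed by the coaction axioms via strict-continuity and density arguments on the generators and injectivity via the gauge-invariant uniqueness theorem with the gauge action coming from $\overline{\beta_z\otimes{\rm id}_S}$. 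The bookkeeping you flag as the main remaining work (the $(A\otimes S)$-strict to $(A\otimes S\otimes S)$-strict continuity needed to verify the coaction identity on $k_X(X)$, since $\sigma(X)$ lies only in $M_{A\otimes S}(X\otimes S)$) is exactly what the paper's proof carries out in detail.
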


    \begin{proof}
        Let us first prove that $(\overline{k_X\otimes{\rm id}_S}\circ\sigma,\overline{k_A\otimes{\rm id}_S}\circ\delta)$ is covariant, that is,
            \begin{equation*}\label{Sec.3.Lemma1.Pf.1}
                \big(\overline{k_X\otimes{\rm id}_S}\circ\sigma\big)^{(1)}\circ\varphi_A=\overline{k_A\otimes{\rm id}_S}\circ\delta
            \end{equation*}
        on $J_X$. Since $\sigma(X)\subseteq M_{A\otimes S}(X\otimes S)$ and thus $\sigma^{(1)}(\mathcal{K}(X))\subseteq\mathcal{K}(M_{A\otimes S}(X\otimes S))$, we have
            \[\big(\overline{k_X\otimes{\rm id}_S}\circ\sigma\big)^{(1)}=\overline{(k_X\otimes{\rm id}_S)^{(1)}}\circ\sigma^{(1)}\]
        on $\mathcal{K}(X)$ by Theorem \ref{DKQ,Cor.A.14}.(ii). We then have
        \begin{align*}
            \big(\overline{k_X\otimes{\rm id}_S}\circ\sigma\big)^{(1)}\circ\varphi_A
            &=\overline{(k_X\otimes{\rm id}_S)^{(1)}}\circ\sigma^{(1)}\circ\varphi_A \\
            &=\overline{(k_X\otimes{\rm id}_S)^{(1)}}\circ\overline{\varphi_{A\otimes S}}\circ\delta
        \end{align*}
        on $J_X$ since $\overline{\sigma^{(1)}}\circ\varphi_A=\overline{\varphi_{A\otimes S}}\circ\delta$ by \cite[Lemma~3.3]{KQRo1}. Hence, the requirement that $(\overline{k_X\otimes{\rm id}_S}\circ\sigma,\overline{k_A\otimes{\rm id}_S}\circ\delta)$ be covariant amounts to that
            \begin{equation*}\label{Sec.3.Lemma1.Pf.4}
                \overline{(k_X\otimes{\rm id}_S)^{(1)}}\circ\overline{\varphi_{A\otimes S}}\circ\delta=\overline{k_A\otimes{\rm id}_S}\circ\delta
            \end{equation*}
       holds on $J_X$. By Remark \ref{WI.for.delta}, this equality will follow if we show that
            \[\overline{(k_X\otimes{\rm id}_S)^{(1)}}\circ\overline{\varphi_{A\otimes S}}=\overline{k_A\otimes{\rm id}_S}\]
        on $M(A\otimes S;J_X\otimes S)$. But, this is the content of Lemma \ref{Lemma.for.Ext.Main.Thm.of.KQR2}, and therefore the representation $(\overline{k_X\otimes{\rm id}_S}\circ\sigma,\overline{k_A\otimes{\rm id}_S}\circ\delta)$ is covariant.

        We now show that $\zeta$ is a coaction of $(S,\Delta)$ on $\mathcal{O}_X$. Since
            \begin{align*}
                \overline{(1_{M(\mathcal{O}_X)}\otimes S)\zeta(k_X(X))}
                &= \overline{\overline{k_A\otimes{\rm id}_S}(1_{M(A)}\otimes S)\overline{k_X\otimes{\rm id}_S}(\sigma(X))} \\
                &= \overline{k_X\otimes{\rm id}_S\big(\varphi_{M(A\otimes S)}(1_{M(A)}\otimes S)\,\sigma(X)\big)} \\
                &= \overline{k_X(X)\odot S},
            \end{align*}
        we have
            \[
                \overline{\zeta(k_X(X)^*)(1_{M(\mathcal{O}_X)}\otimes S)}
                = \big(\,\overline{(1_{M(\mathcal{O}_X)}\otimes S)\zeta(k_X(X))}\,\big)^*
                = \overline{k_X(X)^*\odot S}.
            \]
        We also have $\overline{\zeta(k_X(X))(1_{M(\mathcal{O}_X)}\otimes S)}=\overline{k_X(X)\odot S}$. From these and the coaction nondegeneracy of $\delta$, we can deduce that $\zeta$ satisfies the coaction nondegeneracy.

        The coaction nondegeneracy of $\zeta$ implies $\zeta(\mathcal{O}_X)\subseteq M_{A\otimes S}(\mathcal{O}_X\otimes S)$, and then we have the commutative diagram \eqref{Sec.3.diagram}.

        We can easily see that $\overline{\zeta\otimes{\rm id}_S}\circ\zeta\circ k_A=\overline{{\rm id}_{\mathcal{O}_X}\otimes\Delta}\circ\zeta\circ k_A$ by \eqref{Sec.3.diagram}, strict continuity, and the coaction identity of $\delta$. To prove the corresponding equality for $k_X$, we first note the followings. Let $x\in A\otimes S$ and $m\in M_{A\otimes S}(\mathcal{O}_X\otimes S)$. Then
            \begin{align*}
                (\zeta\otimes{\rm id}_S)\big((k_A\otimes{\rm id}_S)(x)\,m\big)
                &=\overline{k_A\otimes{\rm id}_S\otimes{\rm id}_S}\big((\delta\otimes{\rm id}_S)(x)\big)\,\overline{\zeta\otimes{\rm id}_S}(m), \\
                ({\rm id}_{\mathcal{O}_X}\otimes\Delta)\big((k_A\otimes{\rm id}_S)(x)\,m\big)
                &=\overline{k_A\otimes{\rm id}_S\otimes{\rm id}_S}\big(({\rm id}_{A}\otimes\Delta)(x)\big)\,\overline{{\rm id}_{\mathcal{O}_X}\otimes\Delta}(m),
            \end{align*}
        and similarly for $(\zeta\otimes{\rm id}_S)\big(m\,(k_A\otimes{\rm id}_S)(x)\big)$ and $({\rm id}_{\mathcal{O}_X}\otimes\Delta)\big(m\,(k_A\otimes{\rm id}_S)(x)\big)$. From these relations and also the nondegeneracy of $\delta\otimes{\rm id}_S$ and ${\rm id}_A\otimes\Delta$, we deduce that the restrictions
            \[\overline{\zeta\otimes{\rm id}_S},\ \overline{{\rm id}_{\mathcal{O}_X}\otimes\Delta}:M_{A\otimes S}(\mathcal{O}_X\otimes S)\rightarrow M_{A\otimes S\otimes S}(\mathcal{O}_X\otimes S\otimes S)\]
        are $(A\otimes S)$-strict to $(A\otimes S\otimes S)$-strictly continuous (cf.\ \cite[Lemma~A.5]{DKQ}). Therefore the following compositions
            \begin{multline}\label{Sec.3.Pf.Thm.mult.1}
                \overline{\zeta\otimes{\rm id}_S}\circ\overline{k_X\otimes{\rm id}_S},\ \overline{{\rm id}_{\mathcal{O}_X}\otimes\Delta}\circ\overline{k_X\otimes{\rm id}_S}:\\
                M_{A\otimes S}(X\otimes S)\rightarrow M_{A\otimes S\otimes S}(\mathcal{O}_X\otimes S\otimes S)
            \end{multline}
        are $(A\otimes S)$-strict to $(A\otimes S\otimes S)$-strictly continuous. Similarly, both maps
            \[\overline{\sigma\otimes{\rm id}_S},\ \overline{{\rm id}_X\otimes\Delta}:M_{A\otimes S}(X\otimes S)\rightarrow M_{A\otimes S\otimes S}(X\otimes S\otimes S)\]
        are $(A\otimes S)$-strict to $(A\otimes S\otimes S)$-strictly continuous, and hence so are the maps
            \begin{multline}\label{Sec.3.Pf.Thm.mult.2}
                \overline{k_X\otimes{\rm id}_S\otimes{\rm id}_S}\circ\overline{\sigma\otimes{\rm id}_S},\ \overline{k_X\otimes{\rm id}_S\otimes{\rm id}_S}\circ\overline{{\rm id}_X\otimes\Delta}:\\
                M_{A\otimes S}(X\otimes S)\rightarrow M_{A\otimes S\otimes S}(\mathcal{O}_X\otimes S\otimes S).
            \end{multline}
        Since the equalities
            \begin{align*}
                \overline{\zeta\otimes{\rm id}_S}\circ\overline{k_X\otimes{\rm id}_S}
                &=\overline{k_X\otimes{\rm id}_S\otimes{\rm id}_S}\circ\overline{\sigma\otimes{\rm id}_S}, \\
                \overline{k_X\otimes{\rm id}_S\otimes{\rm id}_S}\circ\overline{{\rm id}_X\otimes\Delta}
                &=\overline{{\rm id}_{\mathcal{O}_X}\otimes\Delta}\circ\overline{k_X\otimes{\rm id}_S}
            \end{align*}
        hold on $X\odot S$ which is $(A\otimes S)$-strictly dense in $M_{A\otimes S}(X\otimes S)$ and since $\sigma(X)\subseteq M_{A\otimes S}(X\otimes S)$, we now have
            \begin{align*}
                \overline{\zeta\otimes{\rm id}_S}\circ\zeta\circ k_X
                &=\overline{\zeta\otimes{\rm id}_S}\circ\overline{k_X\otimes{\rm id}_S}\circ\sigma \\
                &=\overline{k_X\otimes{\rm id}_S\otimes{\rm id}_S}\circ\overline{\sigma\otimes{\rm id}_S}\circ\sigma \\
                &=\overline{k_X\otimes{\rm id}_S\otimes{\rm id}_S}\circ\overline{{\rm id}_X\otimes\Delta}\circ\sigma \\
                &=\overline{{\rm id}_{\mathcal{O}_X}\otimes\Delta}\circ\overline{k_X\otimes{\rm id}_S}\circ\sigma
                =\overline{{\rm id}_{\mathcal{O}_X}\otimes\Delta}\circ\zeta\circ k_X
            \end{align*}
        by the $(A\otimes S)$-strict to $(A\otimes S\otimes S)$-strict continuity of the maps of \eqref{Sec.3.Pf.Thm.mult.1} and \eqref{Sec.3.Pf.Thm.mult.2} and also by the coaction identity of $\sigma$. Thus $\zeta$ satisfies the coaction identity.

        For the last assertion of the theorem, assume that $\delta$ is injective. We only need to show by \cite[Theorem~6.4]{Kat} that the injective covariant representation $(\overline{k_X\otimes{\rm id}_S}\circ\sigma,\overline{k_A\otimes{\rm id}_S}\circ\delta)$ admits a gauge action. Let $\beta:\mathbb{T}\rightarrow Aut(\mathcal{O}_X)$ be the gauge action. Note that for each $z\in\mathbb{T}$, the strict extension $\overline{\beta_z\otimes{\rm id}_S}$ on $M(\mathcal{O}_X\otimes S)$ maps $M_{A\otimes S}(\mathcal{O}_X\otimes S)$ onto itself. Then the composition
            \begin{multline}\label{Sec.3.Comp.Str.Maps}
                (\overline{\beta_z\otimes{\rm id}_S}\circ\overline{k_X\otimes{\rm id}_S},\,\overline{\beta_z\otimes{\rm id}_S}\circ\overline{k_A\otimes{\rm id}_S}): \\
                (M_{A\otimes S}(X\otimes S),M(A\otimes S))\rightarrow M_{A\otimes S}(\mathcal{O}_X\otimes S)
            \end{multline}
        gives a representation which is clearly $(A\otimes S)$-strictly continuous. Since the equalities
            \begin{align*}
                \overline{\beta_z\otimes{\rm id}_S}\circ\overline{k_X\otimes{\rm id}_S}(m)
                &=z\,\overline{k_X\otimes{\rm id}_S}(m), \\
                \quad\overline{\beta_z\otimes{\rm id}_S}\circ\overline{k_A\otimes{\rm id}_S}(n)
                &=\overline{k_A\otimes{\rm id}_S}(n)
            \end{align*}
        are valid for $m\in X\odot S$ and $n\in A\odot S$, and the representation \eqref{Sec.3.Comp.Str.Maps} is $(A\otimes S)$-strictly continuous, the above equalities still hold for $m\in M_{A\otimes S}(X\otimes S)$ and $n\in M(A\otimes S)$. Since $\sigma(X)\subseteq M_{A\otimes S}(X\otimes S)$, it thus follows that
            \[\overline{\beta_z\otimes{\rm id}_S}\circ\overline{k_X\otimes{\rm id}_S}\circ\sigma=z\,\overline{k_X\otimes{\rm id}_S}\circ\sigma,\]
        and similarly that $\overline{\beta_z\otimes{\rm id}_S}\circ\overline{k_A\otimes{\rm id}_S}\circ\delta=\overline{k_A\otimes{\rm id}_S}\circ\delta$. This proves that the restrictions of $\overline{\beta_z\otimes{\rm id}_S}$ to $\zeta(\mathcal{O}_X)$ $(z\in\mathbb{T})$ define a gauge action of $\mathbb{T}$ on $\zeta(\mathcal{O}_X)$, which establishes the theorem.
    \end{proof}

    \begin{defn}\rm
        We call $\zeta$ in Theorem \ref{induced coactions on O_X} the coaction \emph{induced} by $(\sigma,\delta)$.
    \end{defn}

    \begin{rmks}\label{Sec.3.Rmk.to.Thm}\rm
        (1) Let $G$ be a locally compact group. If $(\sigma,\delta)$ is a coaction of $C_0(G)$ on $(X,A)$, then $\overline{\delta(J_X)(1_{M(A)}\otimes S)}=J_X\otimes S$ by \cite[Lemma 2.6.(a)]{HaoNg} and Theorem \ref{actions=coactions}. Hence, $J_X$ is automatically weakly $\delta$-invariant in this case.

        (2) Replacing in the diagram \eqref{Sec.3.diagram} the $(A\otimes S)$-multiplier correspondence and $(A\otimes S)$-multiplier algebra by $(M_S(X\otimes S),M_S(A\otimes S))$ and $M_S(\mathcal{O}_X\otimes S)$, respectively, we can regard $(\overline{k_X\otimes{\rm id}_S},\overline{k_A\otimes{\rm id}_S})$ as the $S$-strict extension by Remarks \ref{Prel.Str.Ext.}.(2).
    \end{rmks}

\section{Reduced crossed product correspondences}\label{Sec.4}

    In this section and the next, we restrict our attention to coactions of reduced Hopf $C^*$-algebras defined by well-behaved multiplicative unitaries.

    \begin{notation}\rm
        To simplify the notations, we often write $S$ and $\widehat{S}$, respectively, for the ``reduced'' and ``dual reduced'' Hopf $C^*$-algebras $S_V$ and $\widehat{S}_V$ defined by a well-behaved multiplicative unitary $V$. We also write $\mathscr{K}$ for $\mathcal{K}(\mathcal{H})$. %Hence $S$ and $\widehat{S}$ are nondegenerate subalgebras of $M(\mathscr{K})$ for some $\mathscr{K}$.

        Let $(\sigma,\delta)$ be a coaction of $S$ on $(X,A)$ and $\iota_S:S\hookrightarrow M(\mathscr{K})$ be the inclusion map. As $\delta_\iota$ in \eqref{Prel.delta.iota}, we denote by $\sigma_\iota$ the composition
            \[\sigma_\iota=\overline{{\rm id}_X\otimes\iota_{S}}\circ\sigma,\]
        where $\overline{{\rm id}_X\otimes\iota_{S}}$ is the strict extension. Evidently, $(\sigma_\iota,\delta_\iota)$ is a nondegenerate correspondence homomorphism:
            \[ \xymatrix  @C=0pc {(X,A) \ar[rr]^-{(\sigma_\iota,\delta_\iota)} \ar[dr]_-{(\sigma,\delta)}
            && (M(X\otimes\mathscr{K}),M(A\otimes\mathscr{K})) \\
            & (M(X\otimes S),M(A\otimes S)) \ar[ur]_-{\qquad(\overline{{\rm id}_X\otimes\iota_S},\,\overline{{\rm id}_A\otimes\iota_S})} &
            }\]

%        If $B$ is a $C^*$-algebra, the canonical embeddings of $x\in\widehat{S}$ and $s\in S$ in $ M(B\otimes\mathscr{K})$ will be written as $1_{M(B)}\otimes x$ and $1_{M(B)}\otimes s$.
    \end{notation}

%    A proof for the next lemma can be found in \cite{Bui}.
    For a coaction $(\sigma,\delta)$ of $S$ on $(X,A)$, we have $\sigma_\iota(X)\cdot(1_{M(A)}\otimes\widehat{S})\subseteq M(X\otimes\mathscr{K})$ and similarly for $\varphi_{M(A\otimes\mathscr{K})}(1_{M(A)}\otimes\widehat{S})\,\sigma_\iota(X)$ since $(M(X\otimes\mathscr{K}),M(A\otimes\mathscr{K}))$ is a $C^*$-cor\-re\-spond\-ence.

    \begin{lem}[{\cite[Proposition~1.3]{Bui}}]\label{Lemma similar to BS's}
        The norm closures in $M(X\otimes\mathscr{K})$ of the subspaces $\sigma_\iota(X)\cdot(1_{M(A)}\otimes\widehat{S})$ and $\varphi_{M(A\otimes\mathscr{K})}(1_{M(A)}\otimes\widehat{S})\,\sigma_\iota(X)$ coincide.
    \end{lem}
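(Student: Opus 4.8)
The plan is to establish the two inclusions between $P:=\overline{\sigma_\iota(X)\cdot(1_{M(A)}\otimes\widehat{S})}$ and $Q:=\overline{\varphi_{M(A\otimes\mathscr{K})}(1_{M(A)}\otimes\widehat{S})\,\sigma_\iota(X)}$ separately. The one ingredient with content is the coaction identity $\overline{\sigma\otimes{\rm id}_S}\circ\sigma=\overline{{\rm id}_X\otimes\Delta}\circ\sigma$ of Definition~\ref{DefofCoactions}(ii). Embedding $S\hookrightarrow\mathscr{K}$ on both tensor legs and using $\Delta_V(s)=V(s\otimes1)V^*$, this identity becomes
\begin{equation*}
   \overline{\sigma_\iota\otimes{\rm id}_\mathscr{K}}\,(\sigma_\iota(\xi)) = V_{23}\,(\sigma_\iota(\xi))_{12}\,V_{23}^{*}\qquad(\xi\in X),\tag{$\star$}
\end{equation*}
where the leftmost $V_{23}$ is read as the left action $\varphi_{M(A\otimes\mathscr{K}\otimes\mathscr{K})}(1\otimes V)$ and the rightmost $V_{23}^{*}$ as right multiplication by $1\otimes V^{*}$, the two $\mathscr{K}$-legs being coefficient legs. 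I will also use that, since $\widehat{S}$ is $*$-closed, it is generated both by the slices $\rho(\omega)=\overline{{\rm id}\otimes\omega}(V)$ and by the slices $\overline{{\rm id}\otimes\omega}(V^{*})$, $\omega\in\mathcal{L}(\mathcal{H})_*$.

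For $Q\subseteq P$ I would start from
\begin{equation*}
   \varphi_{M(A\otimes\mathscr{K})}(1\otimes\rho(\omega))\,\sigma_\iota(\xi)=\overline{{\rm id}\otimes{\rm id}\otimes\omega}\big(V_{23}\,(\sigma_\iota(\xi))_{12}\big),
\end{equation*}
rewrite $V_{23}(\sigma_\iota(\xi))_{12}=\big(\overline{\sigma_\iota\otimes{\rm id}_\mathscr{K}}(\sigma_\iota(\xi))\big)V_{23}$ by $(\star)$, and approximate $\overline{\sigma_\iota\otimes{\rm id}_\mathscr{K}}(\sigma_\iota(\xi))$ $(A\otimes\mathscr{K})$-strictly by finite sums $\sum_i\sigma_\iota(\xi_i)\otimes c_i$ with $\xi_i\in X$ and $c_i\in\mathscr{K}$. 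Slicing the third leg then pulls each $\sigma_\iota(\xi_i)$ out on the left and turns the remaining factor into $\overline{{\rm id}\otimes\omega}\big((1\otimes c_i)V\big)=\rho(\omega(c_i\,\cdot\,))\in\widehat{S}$, exhibiting the left-hand side as a limit of elements $\sum_i\sigma_\iota(\xi_i)\cdot(1\otimes\rho(\omega(c_i\,\cdot\,)))\in P$. The reverse inclusion $P\subseteq Q$ is the mirror image: beginning from $\sigma_\iota(\xi)\cdot\big(1\otimes\overline{{\rm id}\otimes\omega}(V^{*})\big)=\overline{{\rm id}\otimes{\rm id}\otimes\omega}\big((\sigma_\iota(\xi))_{12}V_{23}^{*}\big)$, I would use $(\star)$ in the form $(\sigma_\iota(\xi))_{12}V_{23}^{*}=V_{23}^{*}\,\overline{\sigma_\iota\otimes{\rm id}_\mathscr{K}}(\sigma_\iota(\xi))$ together with the same approximation. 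Because $V^{*}$ acts only on the coefficient legs and leaves the $X$-leg of each $\sigma_\iota(\xi_i)$ untouched, slicing the third leg now reassembles the result as $\sum_i\varphi_{M(A\otimes\mathscr{K})}(1\otimes\hat{y}_i)\,\sigma_\iota(\xi_i)$ with $\hat{y}_i=\overline{{\rm id}\otimes\omega(\,\cdot\,c_i)}(V^{*})\in\widehat{S}$, which lies in $Q$.

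Passing from these generators to the norm closures is routine, using the nondegeneracy of $\widehat{S}\subseteq\mathcal{L}(\mathcal{H})$, the coaction nondegeneracy of $\sigma$, and the density of $X\odot\mathscr{K}$ in $M_{A\otimes\mathscr{K}}(X\otimes\mathscr{K})$. I expect the main obstacle to be the leg bookkeeping proper to the correspondence setting: one must keep the left action $\varphi_{M(A\otimes\mathscr{K}\otimes\mathscr{K})}$ and the right module multiplication carefully apart when reading $(\star)$ and the slice maps, and one must justify both that $\overline{{\rm id}\otimes{\rm id}\otimes\omega}$ commutes with the strict limits involved and that $\overline{\sigma_\iota\otimes{\rm id}_\mathscr{K}}(\sigma_\iota(\xi))$ really is approximable $(A\otimes\mathscr{K})$-strictly by elementary tensors $\sigma_\iota(\xi_i)\otimes c_i$. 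Once these continuity and factorization points are in place, the argument needs nothing beyond $(\star)$ and the multiplicativity of $V$ already used to produce $\Delta_V=\mathrm{Ad}\,V$.
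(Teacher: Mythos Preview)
The paper does not give its own proof here: it attributes the lemma to \cite[Proposition~1.3]{Bui} and only remarks that Bui's argument uses just the pentagonal relation, hence transfers from regular to well-behaved multiplicative unitaries unchanged. Your outline is exactly that argument---the identity $(\star)$ is the coaction identity rewritten via $\Delta_V=\mathrm{Ad}\,V$, and the two inclusions follow by slicing the third leg after commuting $V_{23}$ past $(\sigma_\iota(\xi))_{12}$---so your approach coincides with the one the paper invokes. The technical points you flag (strict approximability of $\overline{\sigma_\iota\otimes{\rm id}_\mathscr{K}}(\sigma_\iota(\xi))$ by finite sums $\sum_i\sigma_\iota(\xi_i)\otimes c_i$, and compatibility of the third-leg slice with the strict limits) are routine once one uses that $\sigma_\iota(\xi)\in M_{\mathscr{K}}(X\otimes\mathscr{K})$ by the coaction nondegeneracy of $\sigma$, so that $\sigma_\iota(\xi)$ is already a strict limit of elements of $X\odot\mathscr{K}$, and that all the maps in play are strictly continuous on bounded sets.
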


    Although \cite[Proposition~1.3]{Bui} assumes the regularity condition \cite{BS} for multiplicative unitaries, the proof uses only the pentagonal relation, and hence works the same for well-behaved multiplicative unitaries.

    We denote by $X\rtimes_\sigma\widehat{S}$ the norm closure of the subspaces considered in Lemma~\ref{Lemma similar to BS's}:
        \[X\rtimes_\sigma\widehat{S}:=\overline{\sigma_\iota(X)\cdot(1_{M(A)}\otimes\widehat{S})}
        =\overline{\varphi_{M(A\otimes\mathscr{K})}(1_{M(A)}\otimes\widehat{S})\,\sigma_\iota(X)}.\]
    It is obvious that $X\rtimes_\sigma\widehat{S}$ is a Hilbert $(A\rtimes_\delta\widehat{S})$-module with respect to the restriction of the operations on $(M(X\otimes\mathscr{K}),M(A\otimes\mathscr{K}))$.

    \begin{thm}\label{crossed product correspondences}
        Let $(\sigma,\delta)$ be a coaction of a reduced Hopf $C^*$-algebra $S$ on a $C^*$-cor\-re\-spon\-dence $(X,A)$. Then $(X\rtimes_\sigma\widehat{S},A\rtimes_\delta\widehat{S})$ is a nondegenerate $C^*$-cor\-re\-spond\-ence such that the inclusion
            \begin{equation*}\label{Sec.4.Inc.in.Thm}
                (X\rtimes_\sigma\widehat{S},A\rtimes_\delta\widehat{S})\hookrightarrow
                (M(X\otimes\mathscr{K}),M(A\otimes\mathscr{K}))
            \end{equation*}
        is a nondegenerate correspondence homomorphism. The left action $\varphi_{A\rtimes_\delta\widehat{S}}$ is injective if $\varphi_A$ is injective. Also,
            \begin{equation*}\label{Sec.4.Cpt.Eq}
                \mathcal{K}(X\rtimes_\sigma\widehat{S})=\mathcal{K}(X)\rtimes_{\sigma^{(1)}}\widehat{S},
            \end{equation*}
        where $\sigma^{(1)}$ is the coaction in Proposition \ref{coaction on KX}, and
            \begin{equation*}\label{PhiDelta=DeltaMu}
                \varphi_{A\rtimes_\delta\widehat{S}}\big(\delta_\iota(a)(1_{M(A)}\otimes x)\big)=\overline{\sigma^{(1)}_\iota}(\varphi_A(a))(1_{M(\mathcal{K}(X))}\otimes x)
            \end{equation*}
        for $a\in A$ and $x\in\widehat{S}$.
    \end{thm}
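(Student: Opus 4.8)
The plan is to verify the four assertions in the natural order, exploiting throughout that $(X\rtimes_\sigma\widehat S, A\rtimes_\delta\widehat S)$ sits inside the honest $C^*$-correspondence $(M(X\otimes\mathscr K), M(A\otimes\mathscr K))$, so that all the module operations are simply restrictions. First I would establish that $(X\rtimes_\sigma\widehat S, A\rtimes_\delta\widehat S)$ really is a nondegenerate $C^*$-correspondence. The Hilbert-module structure over $A\rtimes_\delta\widehat S$ was already noted before the theorem, so the substantive points are that the inner product lands in $A\rtimes_\delta\widehat S$ and that the left action $\varphi_{M(A\otimes\mathscr K)}$ carries $A\rtimes_\delta\widehat S$ into $\mathcal L(X\rtimes_\sigma\widehat S)$ and preserves $X\rtimes_\sigma\widehat S$. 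For the inner product I would compute $\langle \sigma_\iota(\xi)(1\otimes x),\,\sigma_\iota(\eta)(1\otimes y)\rangle = (1\otimes x^*)\,\delta_\iota(\langle\xi,\eta\rangle_A)\,(1\otimes y)$ using that $(\sigma,\delta)$ is a correspondence homomorphism, and this lies in $(1\otimes\widehat S)\,\delta_\iota(A)\,(1\otimes\widehat S)\subseteq A\rtimes_\delta\widehat S$; nondegeneracy of both legs follows from the coaction nondegeneracy of $\sigma$ and $\delta$ together with the two equal descriptions of $X\rtimes_\sigma\widehat S$ in Lemma~\ref{Lemma similar to BS's}. That the inclusion is a nondegenerate correspondence homomorphism is then immediate from these density statements.

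Next I would prove the compacts identity $\mathcal K(X\rtimes_\sigma\widehat S)=\mathcal K(X)\rtimes_{\sigma^{(1)}}\widehat S$. The inclusion $\subseteq$ is the key computation: a generating rank-one operator $\theta_{\sigma_\iota(\xi)(1\otimes x),\,\sigma_\iota(\eta)(1\otimes y)}$ equals $\sigma_\iota(\xi)(1\otimes x)(1\otimes y^*)\sigma_\iota(\eta)^*=\sigma_\iota(\xi)\sigma_\iota(\eta)^*(1\otimes xy^*)$ up to the module relations, and by the second relation of \eqref{Pre.psi.1} applied to $\sigma_\iota$ this is $\overline{\sigma^{(1)}_\iota}(\theta_{\xi,\eta})(1\otimes xy^*)$, which sits in $\overline{\sigma^{(1)}_\iota(\mathcal K(X))(1_{M(\mathcal K(X))}\otimes\widehat S)}=\mathcal K(X)\rtimes_{\sigma^{(1)}}\widehat S$. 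The reverse inclusion follows by reversing this identification, again via \eqref{Pre.psi.1} and the two-sided density of Lemma~\ref{Lemma similar to BS's}. The displayed formula for $\varphi_{A\rtimes_\delta\widehat S}$ is then obtained by applying $\overline{\sigma^{(1)}}\circ\varphi_A=\overline{\varphi_A\otimes{\rm id}_S}\circ\delta$ from Proposition~\ref{coaction on KX} (composed with $\overline{{\rm id}\otimes\iota_S}$ to pass to the $\mathscr K$-picture) to the generator $\delta_\iota(a)(1\otimes x)$, and recognizing the right-hand side as the action of this element on $X\rtimes_\sigma\widehat S$.

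Finally I would treat injectivity of $\varphi_{A\rtimes_\delta\widehat S}$ when $\varphi_A$ is injective. By the displayed formula and density, $\varphi_{A\rtimes_\delta\widehat S}$ is essentially $\overline{\sigma^{(1)}_\iota}\circ\varphi_A$ restricted to the crossed product, so it suffices to know that $\overline{\sigma^{(1)}_\iota}$ is injective on the relevant range; since $\sigma^{(1)}$ is a coaction (Proposition~\ref{coaction on KX}) and its $\iota$-amplification is faithful on $\mathcal K(X)$, injectivity of $\varphi_A$ propagates to $\varphi_{A\rtimes_\delta\widehat S}$. The main obstacle I anticipate is purely technical rather than conceptual: keeping the three strict extensions ($\sigma_\iota$, $\delta_\iota$, and $\overline{\sigma^{(1)}_\iota}$) consistent while passing between the $S$-picture and the $\mathscr K$-picture, and justifying that the formal manipulations with $1\otimes x$, $1\otimes y$ for $x,y\in\widehat S$ remain inside the appropriate multiplier correspondence so that $\overline{\sigma^{(1)}_\iota}(\theta_{\xi,\eta})(1\otimes xy^*)$ is literally the compact operator $\theta$ computed in the crossed product. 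Everything else reduces to the identities \eqref{Pre.psi.1}, the equivariance of $\varphi_A$, and the symmetric description of $X\rtimes_\sigma\widehat S$ already in hand.
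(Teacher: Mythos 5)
Your proposal follows the same skeleton as the paper's proof: realize everything inside $(M(X\otimes\mathscr{K}),M(A\otimes\mathscr{K}))$, get the correspondence structure and the nondegeneracy statements from Lemma~\ref{Lemma similar to BS's} together with coaction nondegeneracy, identify the compacts via rank-one operators and \eqref{Pre.psi.1}, and obtain the last displayed formula from the $\delta$-$\sigma^{(1)}$ equivariance of Proposition~\ref{coaction on KX}. Those parts are sound. However, your injectivity step is genuinely wrong. You reduce injectivity of $\varphi_{A\rtimes_\delta\widehat{S}}$ to faithfulness of $\sigma^{(1)}_\iota$ on $\mathcal{K}(X)$, but that faithfulness is not available: Proposition~\ref{coaction on KX} gives injectivity of $\sigma^{(1)}$ only when $\delta$ is injective, and the theorem does not assume $\delta$ injective. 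Concretely, if $\varphi_A$ is injective and $a\in J_X\cap\ker\delta$ is nonzero, then equivariance gives $\sigma^{(1)}(\varphi_A(a))=\overline{\varphi_A\otimes{\rm id}_S}(\delta(a))=0$ while $\varphi_A(a)\neq0$, so your key claim can fail exactly under the theorem's hypotheses. (Moreover, ``$\overline{\sigma^{(1)}_\iota}\circ\varphi_A$ restricted to the crossed product'' does not parse: $\varphi_A$ is defined on $A$, not on $A\rtimes_\delta\widehat{S}$, so injectivity of the left action cannot be checked generator-by-generator in this way.) The paper's argument is one line and bypasses $\sigma^{(1)}$ entirely: $\varphi_{A\rtimes_\delta\widehat{S}}$ is by construction the restriction to $A\rtimes_\delta\widehat{S}$ of the strict extension $\overline{\varphi_A\otimes{\rm id}_{\mathscr{K}}}$, and the strict extension of an injective nondegenerate homomorphism is injective.

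There is a second, smaller gap in the compacts computation. The element-wise identity $\sigma_\iota(\xi)(1_{M(A)}\otimes xy^*)\,\sigma_\iota(\eta)^* = \sigma_\iota(\xi)\sigma_\iota(\eta)^*\,\varphi_{M(A\otimes\mathscr{K})}(1_{M(A)}\otimes xy^*)$ on which your inclusion rests is false: $1_{M(A)}\otimes xy^*$ does not commute past $\sigma_\iota(\eta)^*$ for individual elements --- if it did, Lemma~\ref{Lemma similar to BS's} would be a triviality rather than a consequence of the pentagonal relation. What is true, and what the paper uses, is an equality of closed linear spans: taking adjoints in Lemma~\ref{Lemma similar to BS's} gives $\overline{(1_{M(A)}\otimes\widehat{S})\,\sigma_\iota(X)^*}=\overline{\sigma_\iota(X)^*\,\varphi_{M(A\otimes\mathscr{K})}(1_{M(A)}\otimes\widehat{S})}$, and running the computation \eqref{Sec.3.Eqn.for.Coact.KX} with this in place of coaction nondegeneracy yields
\begin{equation*}
\mathcal{K}(X\rtimes_\sigma\widehat{S})
=\overline{{\rm span}}\,(X\rtimes_\sigma\widehat{S})(X\rtimes_\sigma\widehat{S})^*
=\overline{\sigma^{(1)}_\iota(\mathcal{K}(X))\,(1_{M(\mathcal{K}(X))}\otimes\widehat{S})}
=\mathcal{K}(X)\rtimes_{\sigma^{(1)}}\widehat{S}.
\end{equation*}
You do flag this commutation issue in your final paragraph as ``purely technical,'' but the repair is not bookkeeping: the argument must be recast at the level of closed spans, because the pointwise identity simply fails.
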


    \begin{proof}
        The first assertion amounts to saying that
        \begin{itemize}
            \item[(i)] the Hilbert $(A\rtimes_\delta\widehat{S})$-module $X\rtimes_\sigma\widehat{S}$ is a nondegenerate $C^*$-cor\-re\-spond\-ence such that $\varphi_{A\rtimes_\delta\widehat{S}}=\varphi_{M(A\otimes\mathscr{K})}|_{A\rtimes_\delta\widehat{S}}$, namely
                    \[\overline{\varphi_{M(A\otimes\mathscr{K})}\big(\delta_\iota(A)(1_{M(A)}\otimes\widehat{S})\big)\,
                    \sigma_\iota(X)\cdot(1_{M(A)}\otimes\widehat{S})}=X\rtimes_\sigma\widehat{S};\]
            \item[(ii)] the inclusion $(X\rtimes_\sigma\widehat{S},A\rtimes_\delta\widehat{S})\hookrightarrow
                (M(X\otimes\mathscr{K}),M(A\otimes\mathscr{K}))$ is a nondegenerate correspondence homomorphism, namely
                    \[\overline{(X\rtimes_\sigma\widehat{S})\cdot(A\otimes\mathscr{K})}=X\otimes\mathscr{K},\quad
                    \overline{(A\rtimes_\delta\widehat{S})(A\otimes\mathscr{K})}=A\otimes\mathscr{K}.\]
        \end{itemize}
        Lemma \ref{Lemma similar to BS's} shows that
            \[\overline{\varphi_{M(A\otimes\mathscr{K})}(1_{M(A)}\otimes\widehat{S})\,
                \sigma_\iota(X)\cdot(1_{M(A)}\otimes\widehat{S})}=\overline{\sigma_\iota(X)\cdot(1_{M(A)}\otimes\widehat{S})}.\]
        Since $\varphi_A$ is nondegenerate, this equality combined with the following
            \[\varphi_{M(A\otimes\mathscr{K})}(\delta_\iota(A))\,\sigma_\iota(X)=\sigma_\iota(\varphi_A(A)X)\]
        gives (i). Since $S$ and $\widehat{S}$ are both nondegenerate subalgebras of $M(\mathscr{K})$, we have
            \begin{equation}\label{Sec.4.in.the.Pf.thm}
            \begin{aligned}
                \overline{(X\rtimes_\sigma\widehat{S})\cdot(A\otimes\mathscr{K})}
                &=\overline{\sigma_\iota(X)\cdot(A\otimes \widehat{S}\mathscr{K})} \\
                &=\overline{\sigma_\iota(X)\cdot(1_{M(A)}\otimes S)\cdot(A\otimes \widehat{S}\mathscr{K})} \\
                &=\overline{(X\otimes S)\cdot(A\otimes\mathscr{K})} = X\otimes\mathscr{K}
            \end{aligned}
            \end{equation}
        and similarly $\overline{(A\rtimes_\delta\widehat{S})(A\otimes\mathscr{K})}=A\otimes\mathscr{K}$. This verifies (ii), and the first assertion of the theorem is established. Since $\varphi_{A\rtimes_\delta\widehat{S}}$ is the restriction of $\overline{\varphi_A\otimes{\rm id}_{\mathscr{K}}}$ which is injective if $\varphi_A$ is, the assertion on the injectivity of $\varphi_{A\rtimes_\delta\widehat{S}}$ follows.

        As in the computation \eqref{Sec.3.Eqn.for.Coact.KX}, but using Lemma \ref{Lemma similar to BS's} instead of coaction nondegeneracy, we can deduce the equality $\mathcal{K}(X\rtimes_\sigma\widehat{S})=\mathcal{K}(X)\rtimes_{\sigma^{(1)}}\widehat{S}$. Finally, %since $\varphi_{A\rtimes_\delta\widehat{S}}$ is by definition the restriction of $\overline{\varphi_A\otimes{\rm id}_{\mathscr{K}}}$ to ${A\rtimes_\delta\widehat{S}}$, we have
        \begin{align*}
            \varphi_{A\rtimes_\delta\widehat{S}}\big(\delta_\iota(a)(1_{M(A)}\otimes x)\big)
            &= \overline{\varphi_A\otimes{\rm id}_{\mathscr{K}}}\circ\overline{{\rm id}_A\otimes\iota_S}(\delta(a))\,(1_{M(\mathcal{K}(X))}\otimes x)  \\
            &= \overline{{\rm id}_{\mathcal{K}(X)}\otimes\iota_{S}}\circ\overline{\varphi_A\otimes{\rm id}_S}(\delta(a))\,(1_{M(\mathcal{K}(X))}\otimes x) \\
            &= \overline{{\rm id}_{\mathcal{K}(X)}\otimes\iota_{S}}\circ\overline{\sigma^{(1)}}(\varphi_A(a))\,(1_{M(\mathcal{K}(X))}\otimes x) \\
            &= \overline{\sigma^{(1)}_\iota}(\varphi_A(a))(1_{M(\mathcal{K}(X))}\otimes x),
        \end{align*}
        in the third step of which we use the $\delta$-$\sigma^{(1)}$ equivariancy of $\varphi_A$ obtained in Proposition \ref{coaction on KX}. This completes the proof.
    \end{proof}

    \begin{defn}\label{Sec.4.Def}\rm
        We call the $C^*$-correspondence $(X\rtimes_\sigma\widehat{S},A\rtimes_\delta\widehat{S})$ in Theorem~\ref{crossed product correspondences} the \emph{reduced crossed product correspondence} of $(X,A)$ by the coaction $(\sigma,\delta)$ of $S$.
    \end{defn}

    \begin{rmk}\label{sigmaL,deltaL define a non-deg.corr.hom.}\rm
        We require no universal property of the crossed product $A\rtimes_\delta\widehat{S}$ to define the left action $\varphi_{A\rtimes_\delta\widehat{S}}:A\rtimes_\delta\widehat{S}\rightarrow\mathcal{L}(X\rtimes_\sigma\widehat{S})$. It is just the restriction of $\varphi_{M(A\otimes\mathscr{K})}$.
    \end{rmk}

    \begin{rmk}\label{Sec.4.Rmk.for.Justi.}\rm
        For an action $(\gamma,\alpha)$ of a locally compact group $G$ on $(X,A)$, one can form the crossed product correspondence $(X\rtimes_{\gamma,r}G,A\rtimes_{\alpha,r}G)$ by \cite[Proposition~3.2]{EKQR}. We will see in Corollary \ref{Appendix.B.Cor.1} that it is isomorphic to the reduced crossed product correspondence $(X\rtimes_{\sigma^\gamma_G}\widehat{S}_{\widehat{W}_G},A\rtimes_{\delta^\alpha_G}\widehat{S}_{\widehat{W}_G})$, where $(\sigma^\gamma_G,\delta^\alpha_G)$ is the coaction of the Hopf $C^*$-algebra $S_{\widehat{W}_G}$ given in \eqref{Appendix.B.Cor.1.Eqn}. On the other hand, if $(\sigma,\delta)$ is a nondegenerate coaction of $G$ on $(X,A)$ (\cite[Definition 2.10]{EKQR}) and if $\sigma_\lambda:=\overline{{\rm id}_X\otimes\lambda}\circ\sigma$ as \eqref{Prel.coaction.of.G}, then the crossed product correspondence by $(\sigma,\delta)$ in the sense of \cite[Proposition~3.9]{EKQR} is just the reduced crossed product correspondence by the coaction $(\sigma_\lambda,\delta_\lambda)$ of the Hopf $C^*$-algebra $S_{W_G}$. Construction in Theorem~\ref{crossed product correspondences} thus extends both of the crossed product correspondences by actions and nondegenerate coactions of locally compact groups on $C^*$-cor\-re\-spond\-ences.
    \end{rmk}

    As in \cite[Remark 2.7]{KQRo2}, we have the following corollary, the proof of which is routine.

    \begin{cor}\label{Sec.4.jxja}
        Let $(\sigma,\delta)$ be a coaction of $S$ on $(X,A)$. Then the map
            \[(j_X^\sigma,j_A^\delta):(X,A)\rightarrow (M(X\rtimes_\sigma\widehat{S}),M(A\rtimes_\delta\widehat{S}))\]
        defined by
            \[j_X^\sigma(\xi)\cdot c:=\sigma_\iota(\xi)\cdot c,\quad j_A^\delta(a)c:=\delta_\iota(a)c\]
        for $\xi\in X$, $a\in A$, and $c\in A\rtimes_\delta\widehat{S}$ is a nondegenerate correspondence homomorphism such that $j_X^\sigma(X)\subseteq M_{A\rtimes_\delta\widehat{S}}(X\rtimes_\sigma\widehat{S})$.
    \end{cor}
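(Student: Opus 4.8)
The plan is to realize both maps as left-multiplication operators inside the ambient multiplier correspondence $(M(X\otimes\mathscr{K}),M(A\otimes\mathscr{K}))$ and to reduce every verification to three identities for $(\sigma_\iota,\delta_\iota)$ obtained by applying the strict extension $\overline{{\rm id}\otimes\iota_S}$ to the defining relations of the correspondence homomorphism $(\sigma,\delta)$, namely $\sigma_\iota(\xi)\cdot\delta_\iota(a)=\sigma_\iota(\xi\cdot a)$, $\sigma_\iota(\xi)^*\sigma_\iota(\eta)=\delta_\iota(\langle\xi,\eta\rangle_A)$, and $\varphi_{M(A\otimes\mathscr{K})}(\delta_\iota(a))\,\sigma_\iota(\xi)=\sigma_\iota(\varphi_A(a)\xi)$ (the last being the $\delta$-$\sigma^{(1)}$ equivariancy of Proposition~\ref{coaction on KX}). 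Throughout I would work on the dense elementary tensors $\delta_\iota(a)(1_{M(A)}\otimes x)$ and $\sigma_\iota(\eta)\cdot(1_{M(A)}\otimes x)$ spanning $A\rtimes_\delta\widehat{S}$ and $X\rtimes_\sigma\widehat{S}$, and pass to general elements by norm continuity.

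First I would treat $j_A^\delta$. Since $\delta_\iota$ is a homomorphism, $\delta_\iota(a)\,\delta_\iota(a')(1_{M(A)}\otimes x)=\delta_\iota(aa')(1_{M(A)}\otimes x)$ shows that left multiplication by $\delta_\iota(a)$ maps $A\rtimes_\delta\widehat{S}$ into itself; its adjoint is left multiplication by $\delta_\iota(a)^*=\delta_\iota(a^*)$, so $j_A^\delta(a)\in M(A\rtimes_\delta\widehat{S})$ and $j_A^\delta$ is a $*$-homomorphism. Nondegeneracy follows from $\overline{\delta_\iota(A)\,\delta_\iota(A)(1_{M(A)}\otimes\widehat{S})}=\overline{\delta_\iota(A)(1_{M(A)}\otimes\widehat{S})}=A\rtimes_\delta\widehat{S}$.

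Next I would treat $j_X^\sigma$. On the spanning elements, $\sigma_\iota(\xi)\cdot\delta_\iota(a)(1_{M(A)}\otimes x)=\sigma_\iota(\xi\cdot a)\cdot(1_{M(A)}\otimes x)$ lies in $X\rtimes_\sigma\widehat{S}$, so left multiplication by $\sigma_\iota(\xi)$ indeed maps $A\rtimes_\delta\widehat{S}$ into $X\rtimes_\sigma\widehat{S}$; the candidate adjoint, left multiplication by $\sigma_\iota(\xi)^*$, sends $\sigma_\iota(\eta)\cdot(1_{M(A)}\otimes x)$ to $\delta_\iota(\langle\xi,\eta\rangle_A)(1_{M(A)}\otimes x)\in A\rtimes_\delta\widehat{S}$. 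Hence $j_X^\sigma(\xi)\in\mathcal{L}(A\rtimes_\delta\widehat{S},X\rtimes_\sigma\widehat{S})=M(X\rtimes_\sigma\widehat{S})$. The correspondence homomorphism axioms are then immediate from the three identities: axiom~(ii) is $\langle j_X^\sigma(\xi),j_X^\sigma(\eta)\rangle=j_X^\sigma(\xi)^*j_X^\sigma(\eta)=j_A^\delta(\langle\xi,\eta\rangle_A)$, and axiom~(i) follows from the equivariancy identity together with $\varphi_{A\rtimes_\delta\widehat{S}}=\varphi_{M(A\otimes\mathscr{K})}|_{A\rtimes_\delta\widehat{S}}$. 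Nondegeneracy of the $X$-part comes from $\overline{\sigma_\iota(X)\cdot\delta_\iota(A)(1_{M(A)}\otimes\widehat{S})}=\overline{\sigma_\iota(X\cdot A)\cdot(1_{M(A)}\otimes\widehat{S})}=X\rtimes_\sigma\widehat{S}$.

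Finally, for the inclusion $j_X^\sigma(X)\subseteq M_{A\rtimes_\delta\widehat{S}}(X\rtimes_\sigma\widehat{S})$ I would verify $\varphi_{A\rtimes_\delta\widehat{S}}(b)\,j_X^\sigma(\xi)\in X\rtimes_\sigma\widehat{S}$ for $b\in A\rtimes_\delta\widehat{S}$, which amounts to $\varphi_{M(A\otimes\mathscr{K})}(b)\,\sigma_\iota(\xi)\in X\rtimes_\sigma\widehat{S}$. On $b=\delta_\iota(a)(1_{M(A)}\otimes x)$ I would factor $\varphi_{M(A\otimes\mathscr{K})}(b)\,\sigma_\iota(\xi)=\varphi_{M(A\otimes\mathscr{K})}(\delta_\iota(a))\big(\varphi_{M(A\otimes\mathscr{K})}(1_{M(A)}\otimes x)\,\sigma_\iota(\xi)\big)$, where the bracketed factor lies in $X\rtimes_\sigma\widehat{S}$ by the second description of the module; then $\varphi_{M(A\otimes\mathscr{K})}(\delta_\iota(a))=\overline{\varphi_{A\rtimes_\delta\widehat{S}}}(j_A^\delta(a))$ preserves $X\rtimes_\sigma\widehat{S}$. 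This is the one place where the ordering matters, and the only subtlety worth flagging: the step legitimately applies the extension of the (nondegenerate) left action $\varphi_{A\rtimes_\delta\widehat{S}}$ to the multiplier algebra only because $j_A^\delta(a)$ has already been shown to be a genuine element of $M(A\rtimes_\delta\widehat{S})$. Beyond this bookkeeping the argument is entirely routine, consisting of the three identities and density.
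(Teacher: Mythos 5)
Your proof is correct, and it supplies exactly what the paper intends: the paper gives no argument for Corollary~\ref{Sec.4.jxja}, declaring the proof routine (as in \cite[Remark~2.7]{KQRo2}), and your verification---realizing $j_X^\sigma$ and $j_A^\delta$ as left multiplications inside $(M(X\otimes\mathscr{K}),M(A\otimes\mathscr{K}))$, reducing all axioms to the three correspondence-homomorphism identities for $(\sigma_\iota,\delta_\iota)$, and using both descriptions of $X\rtimes_\sigma\widehat{S}$ from Lemma~\ref{Lemma similar to BS's} for the inclusion $j_X^\sigma(X)\subseteq M_{A\rtimes_\delta\widehat{S}}(X\rtimes_\sigma\widehat{S})$---is precisely that routine argument, carried out correctly (including the one genuine subtlety you flag, that $\varphi_{M(A\otimes\mathscr{K})}(\delta_\iota(a))$ preserves $X\rtimes_\sigma\widehat{S}$).
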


    \begin{rmk}\label{Sec.4.Rmk.for.Pf.Main.Thm}\rm
        Since $(X\rtimes_\sigma\widehat{S},A\rtimes_\delta\widehat{S})$ is by Theorem~\ref{crossed product correspondences} a nondegenerate $C^*$-cor\-re\-spond\-ence, we have by Theorem~\ref{DKQ,Cor.A.14}.(i) the $(A\rtimes_\delta\widehat{S})$-strict extension
            \[(\overline{k_{X\rtimes_\sigma\widehat{S}}},\overline{k_{A\rtimes_\delta\widehat{S}}}):
            (M_{A\rtimes_\delta\widehat{S}}(X\rtimes_\sigma\widehat{S}),M(A\rtimes_\delta\widehat{S}))\hookrightarrow M_{A\rtimes_\delta\widehat{S}}(\mathcal{O}_{X\rtimes_\sigma\widehat{S}})\]
        for the universal covariant representation $(k_{X\rtimes_\sigma\widehat{S}},k_{A\rtimes_\delta\widehat{S}})$. The composition
            \[(\overline{k_{X\rtimes_\sigma\widehat{S}}}\circ j_X^\sigma,\overline{k_{A\rtimes_\delta\widehat{S}}}\circ j_A^\delta):(X,A)\rightarrow M_{A\rtimes_\delta\widehat{S}}(\mathcal{O}_{X\rtimes_\sigma\widehat{S}}).\]
        then gives a representation of $(X,A)$. It can be shown that this representation is covariant although we do not need this fact in the sequel.
    \end{rmk}

\section{Reduced crossed products}\label{Sec.5}

    In this section, we first show that the $C^*$-cor\-re\-spond\-ence $(X\rtimes_\sigma\widehat{S},A\rtimes_\delta\widehat{S})$ has a representation $(k_X\rtimes_\sigma{\rm id},k_A\rtimes_\delta{\rm id})$ on the reduced crossed product $\mathcal{O}_X\rtimes_\zeta\widehat{S}$. We then provide a couple of equivalent conditions that this representation is covariant, which is readily seen to be the case if the ideal $J_{X\rtimes_\sigma\widehat{S}}$ of $A\rtimes_\delta\widehat{S}$ is generated by $\delta_\iota(J_X)$ or the left action $\varphi_A$ is injective. Under this covariance condition, the integrated form of the representation $(X\rtimes_\sigma\widehat{S},A\rtimes_\delta\widehat{S})$ gives an isomorphism between the $C^*$-algebra $\mathcal{O}_X\rtimes_\zeta\widehat{S}$ and the Cuntz-Pimsner algebra $\mathcal{O}_{X\rtimes_\sigma\widehat{S}}$. The representation
    \begin{equation*}\label{Sec.5.Begin}
        (\overline{k_X\otimes{\rm id}_{\mathscr{K}}},\overline{k_A\otimes{\rm id}_{\mathscr{K}}}):
        (M_{A\otimes\mathscr{K}}(X\otimes\mathscr{K}),M(A\otimes\mathscr{K}))\rightarrow M_{A\otimes\mathscr{K}}(\mathcal{O}_X\otimes\mathscr{K})
    \end{equation*}
    will play an important role in our analysis.

    Recall that $\overline{k_X\otimes\id_C}$ denotes the $(A\otimes C)$-strict extension to $M_{A\otimes C}(X\otimes C)$.

    \begin{lem}\label{the diagram}
        Let $(X,A)$ be a $C^*$-correspondence. Let $S$ be a reduced Hopf $C^*$-algebra and $\iota_{S}:S\hookrightarrow M(\mathscr{K})$ be the inclusion. Then the following diagram commutes:
        \begin{equation}\label{Sec.5.diagram}
        \begin{gathered}
        \xymatrix{(M_{A\otimes S}(X\otimes S),M(A\otimes S))
            \ar[rr]^-{(\overline{{\rm id}_X\otimes\iota_{S}},\overline{{\rm id}_A\otimes\iota_{S}})}
            \ar[d]_-{(\overline{k_X\otimes{\rm id}_{S}},\overline{k_A\otimes{\rm id}_{S}})}
            && (M_{A\otimes\mathscr{K}}(X\otimes\mathscr{K}),M(A\otimes\mathscr{K}))
            \ar[d]^-{(\overline{k_X\otimes{\rm id}_{\mathscr{K}}},\overline{k_A\otimes{\rm id}_{\mathscr{K}}})} \\
            M_{A\otimes S}(\mathcal{O}_X\otimes S)
            \ar[rr]_-{\overline{{\rm id}_{\mathcal{O}_X}\otimes\iota_{S}}}
            && M_{A\otimes\mathscr{K}}(\mathcal{O}_X\otimes\mathscr{K}) }
        \end{gathered}
        \end{equation}
    \end{lem}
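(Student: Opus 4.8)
The plan is to realize the two composites of \eqref{Sec.5.diagram} as correspondence homomorphisms that are continuous from the $(A\otimes S)$-strict topology to the $(A\otimes\mathscr{K})$-strict topology, and then to check that they agree on an $(A\otimes S)$-strictly dense subspace. First I would record that each of the four arrows is (the restriction of) a usual strict extension of a nondegenerate correspondence homomorphism, so that Remarks~\ref{Prel.Str.Ext.}.(1) applies. The two vertical arrows are exactly the $(A\otimes S)$- and $(A\otimes\mathscr{K})$-strict extensions furnished by the Notation that defines $\overline{k_X\otimes\id_C}$. For the top horizontal arrow I would apply \cite[Proposition~A.11]{DKQ} to the pair $(\id_X\otimes\iota_S,\id_A\otimes\iota_S)$, which is nondegenerate since $\iota_S(S)$ is nondegenerate in $M(\mathscr{K})$; this yields an $(A\otimes S)$-strict to $(A\otimes\mathscr{K})$-strictly continuous map into $M_{A\otimes\mathscr{K}}(X\otimes\mathscr{K})$. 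For the bottom horizontal arrow I would likewise invoke \cite[Proposition~A.11]{DKQ} for the nondegenerate homomorphism $\id_{\mathcal{O}_X}\otimes\iota_S$, taking $\kappa_C=k_A\otimes\id_S$, $\kappa_D=k_A\otimes\id_{\mathscr{K}}$, and the compatibility homomorphism $\lambda=k_A\otimes\iota_S$; the required identity $\pi(\kappa_C(c)a)=\lambda(c)\pi(a)$ is immediate on elementary tensors.

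With the four arrows in place, I would observe that the right-down composite $\overline{k_X\otimes\id_{\mathscr{K}}}\circ\overline{\id_X\otimes\iota_S}$ passes through the intermediate $(A\otimes\mathscr{K})$-strict topology while the down-right composite $\overline{\id_{\mathcal{O}_X}\otimes\iota_S}\circ\overline{k_X\otimes\id_S}$ passes through the intermediate $(A\otimes S)$-strict topology; in either case the composite is continuous from the $(A\otimes S)$-strict topology on $M_{A\otimes S}(X\otimes S)$ to the $(A\otimes\mathscr{K})$-strict topology on $M_{A\otimes\mathscr{K}}(\mathcal{O}_X\otimes\mathscr{K})$, and analogously for the $A$-components on $M(A\otimes S)$.

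Next I would verify commutativity on the algebraic tensor products $X\odot S$ and $A\odot S$. For the down-right composite this is a one-line evaluation on elementary tensors, $\xi\otimes s\mapsto k_X(\xi)\otimes s\mapsto k_X(\xi)\otimes\iota_S(s)$ and $a\otimes s\mapsto k_A(a)\otimes\iota_S(s)$, since on $\mathcal{O}_X\odot S$ the extension $\overline{\id_{\mathcal{O}_X}\otimes\iota_S}$ is just $\id_{\mathcal{O}_X}\otimes\iota_S$. For the right-down composite the top map sends $\xi\otimes s$ to $\xi\otimes\iota_S(s)$, and the only subtle point is evaluating $\overline{k_X\otimes\id_{\mathscr{K}}}$ at $\xi\otimes\iota_S(s)$, because $\iota_S(s)$ lies in $M(\mathscr{K})$ rather than $\mathscr{K}$; here I would insert an approximate unit $(e_\lambda)$ of $\mathscr{K}$, note that $\xi\otimes e_\lambda\iota_S(s)\to\xi\otimes\iota_S(s)$ $(A\otimes\mathscr{K})$-strictly with $\xi\otimes e_\lambda\iota_S(s)\in X\odot\mathscr{K}$, and invoke $(A\otimes\mathscr{K})$-strict continuity to conclude $\overline{k_X\otimes\id_{\mathscr{K}}}(\xi\otimes\iota_S(s))=k_X(\xi)\otimes\iota_S(s)$. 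Thus both composites agree with $\xi\otimes s\mapsto k_X(\xi)\otimes\iota_S(s)$ and $a\otimes s\mapsto k_A(a)\otimes\iota_S(s)$ on the algebraic tensor products.

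Finally, since $X\odot S$ and $A\odot S$ are $(A\otimes S)$-strictly dense in $M_{A\otimes S}(X\otimes S)$ and $M(A\otimes S)$, and both composites are $(A\otimes S)$-strict to $(A\otimes\mathscr{K})$-strictly continuous with values in the Hausdorff space $M_{A\otimes\mathscr{K}}(\mathcal{O}_X\otimes\mathscr{K})$, the two composites coincide, which is precisely the commutativity of \eqref{Sec.5.diagram}. I expect the main obstacle to be the correct set-up and continuity of the bottom horizontal map $\overline{\id_{\mathcal{O}_X}\otimes\iota_S}$ on the $(A\otimes S)$-multiplier algebra --- in particular pinning down the compatibility homomorphism $\lambda=k_A\otimes\iota_S$ needed to apply \cite[Proposition~A.11]{DKQ} --- together with the approximate-unit evaluation of $\overline{k_X\otimes\id_{\mathscr{K}}}$ on $\xi\otimes\iota_S(s)$; everything else is bookkeeping with the strict topologies.
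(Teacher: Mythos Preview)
Your proposal is correct and follows essentially the same approach as the paper: establish that the horizontal maps are $(A\otimes S)$-strict to $(A\otimes\mathscr{K})$-strictly continuous via \cite[Proposition~A.11]{DKQ}, deduce the same continuity for both composites, verify agreement on $(X\odot S,A\odot S)$, and conclude by density. Your write-up is simply more explicit than the paper's---in particular the compatibility data $\lambda=k_A\otimes\iota_S$ for the bottom arrow and the approximate-unit evaluation of $\overline{k_X\otimes\id_{\mathscr{K}}}(\xi\otimes\iota_S(s))$ are details the paper suppresses but which you handle correctly.
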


    \begin{proof}
        By \cite[Proposition A.11]{DKQ}, we see that the upper and lower horizontal maps are $(A\otimes S)$-strict to $(A\otimes\mathscr{K})$-strictly continuous. Hence the two compositions in \eqref{Sec.5.diagram} are $(A\otimes S)$-strict to $(A\otimes\mathscr{K})$-strictly continuous. Since the diagram commutes on $(X\odot S,A\odot S)$, the conclusion follows by strict continuity.
    \end{proof}

    \begin{cor}\label{Sec.5.cor.to.diagram}
        Let $(\sigma,\delta)$ be a coaction of $S$ on $(X,A)$ such that $J_X$ is weakly $\delta$-invariant. Then $\sigma_\iota(X)\subseteq M_{A\otimes\mathscr{K}}(X\otimes\mathscr{K})$ and
            \begin{equation*}\label{Sec.5.upper.zeta.iota.2}
                X\rtimes_\sigma\widehat{S}\subseteq M_{A\otimes\mathscr{K}}(X\otimes\mathscr{K}).
            \end{equation*}
        Also,
        \begin{equation}\label{Sec.5.zeta.iota}
            \overline{k_X\otimes{\rm id}_{\mathscr{K}}}(\sigma_\iota(\xi))=\zeta_\iota(k_X(\xi)), \quad
            \overline{k_A\otimes{\rm id}_{\mathscr{K}}}(\delta_\iota(a))=\zeta_\iota(k_A(a))
        \end{equation}
        for $\xi\in X$ and $a\in A$.
    \end{cor}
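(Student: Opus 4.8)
The plan is to deduce both assertions from the two commutative diagrams already available: diagram \eqref{Sec.3.diagram} of Theorem~\ref{induced coactions on O_X} and diagram \eqref{Sec.5.diagram} of Lemma~\ref{the diagram}. The weak $\delta$-invariancy of $J_X$ is used only to invoke Theorem~\ref{induced coactions on O_X}, which guarantees that the induced coaction $\zeta$, and hence $\zeta_\iota=\overline{{\rm id}_{\mathcal{O}_X}\otimes\iota_S}\circ\zeta$, is defined; beyond this the hypothesis plays no further role.

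First I would establish the two inclusions. By the remark following Definition~\ref{DefofCoactions} we have $\sigma(X)\subseteq M_S(X\otimes S)\subseteq M_{A\otimes S}(X\otimes S)$. Since the upper horizontal arrow $\overline{{\rm id}_X\otimes\iota_S}$ of \eqref{Sec.5.diagram} carries $M_{A\otimes S}(X\otimes S)$ into $M_{A\otimes\mathscr{K}}(X\otimes\mathscr{K})$ and $\sigma_\iota=\overline{{\rm id}_X\otimes\iota_S}\circ\sigma$, it follows at once that $\sigma_\iota(X)\subseteq M_{A\otimes\mathscr{K}}(X\otimes\mathscr{K})$. Because $M_{A\otimes\mathscr{K}}(X\otimes\mathscr{K})$ is a norm-closed right $M(A\otimes\mathscr{K})$-submodule of $M(X\otimes\mathscr{K})$ and $1_{M(A)}\otimes\widehat{S}\subseteq M(A\otimes\mathscr{K})$, the product $\sigma_\iota(X)\cdot(1_{M(A)}\otimes\widehat{S})$ and its norm closure $X\rtimes_\sigma\widehat{S}$ remain inside $M_{A\otimes\mathscr{K}}(X\otimes\mathscr{K})$.

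For the equalities \eqref{Sec.5.zeta.iota}, I would read off from diagram \eqref{Sec.3.diagram} that $\zeta\circ k_X=\overline{k_X\otimes{\rm id}_S}\circ\sigma$ and $\zeta\circ k_A=\overline{k_A\otimes{\rm id}_S}\circ\delta$, and then apply $\overline{{\rm id}_{\mathcal{O}_X}\otimes\iota_S}$ to both sides. Evaluating at $\xi\in X$ gives $\zeta_\iota(k_X(\xi))=\overline{{\rm id}_{\mathcal{O}_X}\otimes\iota_S}\big(\overline{k_X\otimes{\rm id}_S}(\sigma(\xi))\big)$; since $\sigma(\xi)\in M_{A\otimes S}(X\otimes S)$, the commutativity of \eqref{Sec.5.diagram} rewrites the right-hand side as $\overline{k_X\otimes{\rm id}_{\mathscr{K}}}\big(\overline{{\rm id}_X\otimes\iota_S}(\sigma(\xi))\big)=\overline{k_X\otimes{\rm id}_{\mathscr{K}}}(\sigma_\iota(\xi))$, which is the first equality. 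The same reasoning applied to $k_A$ and $\delta$ yields $\zeta_\iota(k_A(a))=\overline{k_A\otimes{\rm id}_{\mathscr{K}}}(\delta_\iota(a))$.

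The only point requiring care, rather than a genuine obstacle, is the bookkeeping of domains and strict extensions: one must check that each element to which a diagram is applied actually lies in the multiplier correspondence where the commutativity is asserted (here $\sigma(\xi)\in M_{A\otimes S}(X\otimes S)$, supplied by the remark after Definition~\ref{DefofCoactions}), so that Lemma~\ref{the diagram} and diagram \eqref{Sec.3.diagram} apply verbatim. Everything else is a direct substitution.
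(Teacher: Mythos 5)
Your proof is correct and takes essentially the same route as the paper's: both deduce the inclusions and the equalities \eqref{Sec.5.zeta.iota} by combining diagrams \eqref{Sec.3.diagram} and \eqref{Sec.5.diagram}, using that $M_{A\otimes\mathscr{K}}(X\otimes\mathscr{K})$ absorbs right multiplication by $M(A\otimes\mathscr{K})$ to pass from $\sigma_\iota(X)$ to $X\rtimes_\sigma\widehat{S}$. The only difference is that you spell out the domain bookkeeping (e.g.\ $\sigma(\xi)\in M_{A\otimes S}(X\otimes S)$ so that the diagrams apply) which the paper treats as immediate.
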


    \begin{proof}
        By Theorem \ref{induced coactions on O_X}, we can consider the induced coaction $\zeta$ on $\mathcal{O}_X$ making the diagram \eqref{Sec.3.diagram} commute. Combining \eqref{Sec.3.diagram} and \eqref{Sec.5.diagram} we see that
            \[\sigma_\iota(X)=\overline{{\rm id}_X\otimes\iota_S}(\sigma(X))\subseteq M_{A\otimes\mathscr{K}}(X\otimes\mathscr{K}),\]
        and thus
            \begin{align*}
            X\rtimes_\sigma\widehat{S}&=\overline{\sigma_\iota(X)\cdot(1_{M(A)}\otimes\widehat{S})}\\
            &\subseteq M_{A\otimes\mathscr{K}}(X\otimes\mathscr{K})\cdot M(A\otimes\mathscr{K})=M_{A\otimes\mathscr{K}}(X\otimes\mathscr{K}).
            \end{align*}
        The equalities of \eqref{Sec.5.zeta.iota} are also immediate from \eqref{Sec.3.diagram} and \eqref{Sec.5.diagram}.
    \end{proof}

    \begin{rmk}\label{Sec.5.Rmk.to.Cor.to.Diag}\rm
        From Corollary \ref{Sec.5.cor.to.diagram} (and also from Theorem \ref{crossed product correspondences}), we have an injective correspondence homomorphism
            \[(X\rtimes_\sigma\widehat{S},A\rtimes_\delta\widehat{S})\hookrightarrow
                (M_{A\otimes\mathscr{K}}(X\otimes\mathscr{K}),M(A\otimes\mathscr{K})).\]
        We then have
            \[\mathcal{K}(X\rtimes_\sigma\widehat{S})\subseteq \mathcal{K}\big(M_{A\otimes\mathscr{K}}(X\otimes\mathscr{K})\big) \subseteq M_{A\otimes\mathscr{K}}\big(\mathcal{K}(X\otimes\mathscr{K})\big).\]
    \end{rmk}

    \begin{prop}\label{embedding XxShat to OxShat is a Toep.rep.}
        Let $(\sigma,\delta)$ be a coaction of $S$ on $(X,A)$ such that $J_X$ is weakly $\delta$-invariant. Then, the restriction of $(\overline{k_X\otimes{\rm id}_{\mathscr{K}}},\overline{k_A\otimes{\rm id}_{\mathscr{K}}})$ to $(X\rtimes_\sigma\widehat{S},A\rtimes_\delta\widehat{S})$ defines an injective representation
            \begin{equation*}\label{Sec.5.Prop.1}
                (k_X\rtimes_\sigma{\rm id}_{\widehat{S}},k_A\rtimes_\delta{\rm id}_{\widehat{S}}):(X\rtimes_\sigma\widehat{S},A\rtimes_\delta\widehat{S})\rightarrow\mathcal{O}_X\rtimes_{\zeta}\widehat{S}
            \end{equation*}
        such that
            \begin{equation}\label{Sec.5.Rep.1}
            \begin{aligned}
                k_X\rtimes_\sigma{\rm id}_{\widehat{S}}\big(\sigma_\iota(\xi)\cdot(1_{M(A)}\otimes x)\big) &=\zeta_\iota(k_X(\xi))(1_{M(\mathcal{O}_X)}\otimes x), \\
                k_A\rtimes_\delta{\rm id}_{\widehat{S}}\big(\delta_\iota(a)(1_{M(A)}\otimes x)\big)
                &=\zeta_\iota(k_A(a))(1_{M(\mathcal{O}_X)}\otimes x), \\
                k_X\rtimes_\sigma{\rm id}_{\widehat{S}}\big(\varphi_{M(A\otimes\mathscr{K})}(1_{M(A)}\otimes x)\sigma_\iota(\xi)\big)
                &= (1_{M(\mathcal{O}_X)}\otimes x)\zeta_\iota(k_X(\xi))
            \end{aligned}
            \end{equation}
        for $\xi\in X$, $x\in\widehat{S}$, and $a\in A$.
    \end{prop}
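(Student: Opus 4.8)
The plan is to realize the stated map as the restriction of the already-understood representation $(\overline{k_X\otimes{\rm id}_{\mathscr{K}}},\overline{k_A\otimes{\rm id}_{\mathscr{K}}})$ to the sub-correspondence $(X\rtimes_\sigma\widehat{S},A\rtimes_\delta\widehat{S})$, and then to compute its values and identify its range. First I would invoke Corollary~\ref{Sec.5.cor.to.diagram}, which gives $X\rtimes_\sigma\widehat{S}\subseteq M_{A\otimes\mathscr{K}}(X\otimes\mathscr{K})$ and $A\rtimes_\delta\widehat{S}\subseteq M(A\otimes\mathscr{K})$, so that the restriction makes sense. Since the restriction of a correspondence homomorphism to a sub-correspondence is again a correspondence homomorphism, and the left action $\varphi_{A\rtimes_\delta\widehat{S}}$ is by Remark~\ref{sigmaL,deltaL define a non-deg.corr.hom.} just the restriction of $\varphi_{M(A\otimes\mathscr{K})}$, the pair $(k_X\rtimes_\sigma{\rm id}_{\widehat{S}},k_A\rtimes_\delta{\rm id}_{\widehat{S}})$ automatically satisfies the two correspondence-homomorphism axioms. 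It then remains to verify the formulas \eqref{Sec.5.Rep.1}, to check that the range sits inside $\mathcal{O}_X\rtimes_\zeta\widehat{S}$, and to establish injectivity.

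To establish the formulas I would argue on generators. A typical element of $X\rtimes_\sigma\widehat{S}$ is $\sigma_\iota(\xi)\cdot(1_{M(A)}\otimes x)$; applying $\overline{k_X\otimes{\rm id}_{\mathscr{K}}}$ and using that it intertwines the module action with $\overline{k_A\otimes{\rm id}_{\mathscr{K}}}$, together with $\overline{k_A\otimes{\rm id}_{\mathscr{K}}}(1_{M(A)}\otimes x)=1_{M(\mathcal{O}_X)}\otimes x$ and the identity $\overline{k_X\otimes{\rm id}_{\mathscr{K}}}(\sigma_\iota(\xi))=\zeta_\iota(k_X(\xi))$ of \eqref{Sec.5.zeta.iota}, yields the first formula; the second follows in the same way from multiplicativity of $\overline{k_A\otimes{\rm id}_{\mathscr{K}}}$ and the second identity of \eqref{Sec.5.zeta.iota}. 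For the third formula I would use the alternative description $\overline{\varphi_{M(A\otimes\mathscr{K})}(1_{M(A)}\otimes\widehat{S})\,\sigma_\iota(X)}$ of $X\rtimes_\sigma\widehat{S}$ furnished by Lemma~\ref{Lemma similar to BS's}, and apply axiom~(i) of the correspondence homomorphism, which converts the left action $\varphi_{M(A\otimes\mathscr{K})}(1_{M(A)}\otimes x)$ into left multiplication by $\overline{k_A\otimes{\rm id}_{\mathscr{K}}}(1_{M(A)}\otimes x)=1_{M(\mathcal{O}_X)}\otimes x$; combined once more with \eqref{Sec.5.zeta.iota} this produces $(1_{M(\mathcal{O}_X)}\otimes x)\,\zeta_\iota(k_X(\xi))$.

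With the formulas in place, the containment of the range in $\mathcal{O}_X\rtimes_\zeta\widehat{S}$ follows directly from the definition $\mathcal{O}_X\rtimes_\zeta\widehat{S}=\overline{\zeta_\iota(\mathcal{O}_X)(1_{M(\mathcal{O}_X)}\otimes\widehat{S})}$: since $k_X(X),k_A(A)\subseteq\mathcal{O}_X$, each of $\zeta_\iota(k_X(\xi))(1_{M(\mathcal{O}_X)}\otimes x)$ and $\zeta_\iota(k_A(a))(1_{M(\mathcal{O}_X)}\otimes x)$ lies in $\mathcal{O}_X\rtimes_\zeta\widehat{S}$, and passing to norm closures shows that both $k_X\rtimes_\sigma{\rm id}_{\widehat{S}}(X\rtimes_\sigma\widehat{S})$ and $k_A\rtimes_\delta{\rm id}_{\widehat{S}}(A\rtimes_\delta\widehat{S})$ are contained in $\mathcal{O}_X\rtimes_\zeta\widehat{S}$. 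Hence $(k_X\rtimes_\sigma{\rm id}_{\widehat{S}},k_A\rtimes_\delta{\rm id}_{\widehat{S}})$ is a genuine representation of $(X\rtimes_\sigma\widehat{S},A\rtimes_\delta\widehat{S})$ on the $C^*$-algebra $\mathcal{O}_X\rtimes_\zeta\widehat{S}$.

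Finally, injectivity reduces to injectivity of $k_A\rtimes_\delta{\rm id}_{\widehat{S}}$, which is the restriction of $\overline{k_A\otimes{\rm id}_{\mathscr{K}}}$ to $A\rtimes_\delta\widehat{S}\subseteq M(A\otimes\mathscr{K})$. The representation $(k_X\otimes{\rm id}_{\mathscr{K}},k_A\otimes{\rm id}_{\mathscr{K}})$ is injective because $(k_X,k_A)$ is injective (\cite[Proposition~4.9]{Kat}) and minimal tensoring with the identity on $\mathscr{K}$ preserves injectivity; its strict extension is therefore injective as well, and restricting to $A\rtimes_\delta\widehat{S}$ keeps it injective. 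I expect the point requiring the most care to be precisely the range computation, namely confirming that the restriction lands in the reduced crossed product $\mathcal{O}_X\rtimes_\zeta\widehat{S}$ itself rather than merely in the ambient multiplier algebra $M_{A\otimes\mathscr{K}}(\mathcal{O}_X\otimes\mathscr{K})$; the explicit formulas \eqref{Sec.5.Rep.1} together with the density of $\zeta_\iota(\mathcal{O}_X)(1_{M(\mathcal{O}_X)}\otimes\widehat{S})$ are exactly what settle this, after which the two axioms and injectivity are inherited from the ambient map.
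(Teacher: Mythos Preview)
Your proposal is correct and follows essentially the same route as the paper: invoke Corollary~\ref{Sec.5.cor.to.diagram} so that the restriction is well defined, compute the formulas \eqref{Sec.5.Rep.1} on generators via \eqref{Sec.5.zeta.iota} and the correspondence-homomorphism axioms for the strict extension, and read off both the range containment in $\mathcal{O}_X\rtimes_\zeta\widehat{S}$ and injectivity from these. The paper's write-up is terser (it simply notes that the restriction is an injective representation into $M_{A\otimes\mathscr{K}}(\mathcal{O}_X\otimes\mathscr{K})$, verifies the first two formulas, and says the third ``can be seen similarly''), but the argument is the same.
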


    \begin{proof}
        Since $X\rtimes_\sigma\widehat{S}\subseteq M_{A\otimes\mathscr{K}}(X\otimes\mathscr{K})$, the restriction
            \[(k_X\rtimes_\sigma{\rm id}_{\widehat{S}},k_A\rtimes_\delta{\rm id}_{\widehat{S}}):=(\overline{k_X\otimes{\rm id}_{\mathscr{K}}}|_{X\rtimes_\sigma\widehat{S}},\,\overline{k_A\otimes{\rm id}_{\mathscr{K}}}|_{A\rtimes_\delta\widehat{S}})\]
        makes sense and is an injective representation of $(X\rtimes_\sigma\widehat{S},A\rtimes_\delta\widehat{S})$ on $M_{A\otimes \mathscr{K}}(\mathcal{O}_X\otimes\mathscr{K})$. Using the equalities \eqref{Sec.5.zeta.iota}, we have
        \begin{align*}
            k_X\rtimes_\sigma{\rm id}_{\widehat{S}}\big(\sigma_\iota(\xi)\cdot(1_{M(A)}\otimes x)\big)
            &=\overline{k_X\otimes{\rm id}_{\mathscr{K}}}\big(\sigma_\iota(\xi)\cdot(1_{M(A)}\otimes x)\big) \\
            &=\overline{k_X\otimes{\rm id}_{\mathscr{K}}}(\sigma_\iota(\xi))\,\overline{k_A\otimes{\rm id}_{\mathscr{K}}}(1_{M(A)}\otimes x) \\
            &=\zeta_\iota(k_X(\xi))(1_{M(\mathcal{O}_X)}\otimes x)
        \end{align*}
         for $\xi\in X$ and $x\in\widehat{S}$, and similarly for $k_A\rtimes_\delta{\rm id}_{\widehat{S}}$. This proves the first two equalities of \eqref{Sec.5.Rep.1}, and hence $(k_X\rtimes_\sigma{\rm id}_{\widehat{S}},k_A\rtimes_\delta{\rm id}_{\widehat{S}})$ is a representation on $\mathcal{O}_X\rtimes_\zeta\widehat{S}$. The last of \eqref{Sec.5.Rep.1} can be seen similarly.
    \end{proof}

    For an action $(\gamma,\alpha)$ of a locally compact group group $G$ on $(X,A)$, the ideal $J_{X\rtimes_{\gamma,r}G}$ for the crossed product correspondence $(X\rtimes_{\gamma,r}G,A\rtimes_{\delta,r}G)$ is known to be equal to the crossed product $J_X\rtimes_{\alpha,r}G$ if $G$ is amenable (\cite[Proposition~2.7]{HaoNg}) or if $G$ is discrete such that it is exact or $\alpha$ has Exel's Approximation Property (\cite[Theorem 5.5]{BKQR}). We now give a partial analogue of this fact in the Hopf $C^*$-algebra setting.

    \begin{prop}\label{Ext.Main.Thm.of.KQR2}
        Let $(\sigma,\delta)$ be a coaction of $S$ on $(X,A)$ such that $J_X$ is weakly $\delta$-invariant. Then
            \begin{equation}\label{Sec.5.Drop.CP.KQR2}
                \delta_\iota(J_X)(1_{M(A)}\otimes\widehat{S})\subseteq J_{X\rtimes_\sigma\widehat{S}}.
            \end{equation}
        In particular, if $J_X=A$ then $J_{X\rtimes_\sigma\widehat{S}}=A\rtimes_\delta\widehat{S}$.
    \end{prop}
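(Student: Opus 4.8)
The goal is to show that for $a\in J_X$ and $x\in\widehat{S}$, the element $\delta_\iota(a)(1_{M(A)}\otimes x)$ lies in $J_{X\rtimes_\sigma\widehat{S}}$. Recall that by definition $J_{X\rtimes_\sigma\widehat{S}}$ consists of those elements of $A\rtimes_\delta\widehat{S}$ that are mapped into $\mathcal{K}(X\rtimes_\sigma\widehat{S})$ by the left action $\varphi_{A\rtimes_\delta\widehat{S}}$ and that annihilate $\ker\varphi_{A\rtimes_\delta\widehat{S}}$. The plan is to verify these two requirements separately, with the main content lying in the first.

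\emph{Compactness.} First I would use the explicit formula from Theorem~\ref{crossed product correspondences},
    \[\varphi_{A\rtimes_\delta\widehat{S}}\big(\delta_\iota(a)(1_{M(A)}\otimes x)\big)=\overline{\sigma^{(1)}_\iota}(\varphi_A(a))(1_{M(\mathcal{K}(X))}\otimes x),\]
to compute the left action on our candidate element. Since $a\in J_X$, we have $\varphi_A(a)\in\mathcal{K}(X)$, so $\sigma^{(1)}(\varphi_A(a))$ makes sense as a genuine element (not just a multiplier) and $\overline{\sigma^{(1)}_\iota}(\varphi_A(a))=\sigma^{(1)}_\iota(\varphi_A(a))$. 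The key point is then to recognize, via the description $\mathcal{K}(X\rtimes_\sigma\widehat{S})=\mathcal{K}(X)\rtimes_{\sigma^{(1)}}\widehat{S}$ also furnished by Theorem~\ref{crossed product correspondences}, that $\sigma^{(1)}_\iota(\varphi_A(a))(1_{M(\mathcal{K}(X))}\otimes x)$ is by its very form an element of $\mathcal{K}(X)\rtimes_{\sigma^{(1)}}\widehat{S}=\mathcal{K}(X\rtimes_\sigma\widehat{S})$. This shows $\varphi_{A\rtimes_\delta\widehat{S}}\big(\delta_\iota(a)(1_{M(A)}\otimes x)\big)\in\mathcal{K}(X\rtimes_\sigma\widehat{S})$, which is the compactness half of membership in $J_{X\rtimes_\sigma\widehat{S}}$.

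\emph{Annihilation of the kernel.} Next I would handle the condition that $\delta_\iota(a)(1_{M(A)}\otimes x)$ annihilate $\ker\varphi_{A\rtimes_\delta\widehat{S}}$. Since $\varphi_{A\rtimes_\delta\widehat{S}}$ is the restriction of $\overline{\varphi_A\otimes{\rm id}_{\mathscr{K}}}$, its kernel is controlled by $\ker\varphi_A$; concretely one checks that $\ker\varphi_{A\rtimes_\delta\widehat{S}}$ is contained in the reduced crossed product $(\ker\varphi_A)\rtimes\widehat{S}$ sitting inside $(A\ker\varphi_A\otimes\mathscr{K})$, so it suffices to show that $\delta_\iota(a)(1_{M(A)}\otimes x)$ kills $\delta_\iota(b)(1_{M(A)}\otimes y)$ whenever $ab=0$ for all $b\in\ker\varphi_A$. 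The weak $\delta$-invariance of $J_X$ guarantees $\delta(a)(1_{M(A)}\otimes s)\in J_X\otimes S$, and the defining property $aA\ker\varphi_A=0$ for $a\in J_X$ propagates through $\delta_\iota$ multiplicatively; this is essentially the computation used in the proof of Lemma~\ref{Lemma.for.Ext.Main.Thm.of.KQR2 0}, now carried out at the level of the crossed products. I would invoke here the slice-map / strict-density techniques already used in Lemma~\ref{Lemma.for.Ext.Main.Thm.of.KQR2} to reduce to elementary tensors and conclude.

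\emph{Main obstacle and the special case.} The harder of the two steps is the annihilation condition, because $\ker\varphi_{A\rtimes_\delta\widehat{S}}$ need not be an honest reduced crossed product of $\ker\varphi_A$ in general, and describing it requires care; controlling it is precisely where weak $\delta$-invariance is essential and where one must argue by strict continuity rather than by a direct algebraic identity. Finally, for the last sentence, if $J_X=A$ then $\varphi_A$ is injective, so $\ker\varphi_A=0$ and the annihilation condition is vacuous; moreover $\delta_\iota(A)(1_{M(A)}\otimes\widehat{S})$ is by definition dense in $A\rtimes_\delta\widehat{S}$, and by \eqref{Sec.5.Drop.CP.KQR2} this dense subspace lies in the ideal $J_{X\rtimes_\sigma\widehat{S}}$, forcing $J_{X\rtimes_\sigma\widehat{S}}=A\rtimes_\delta\widehat{S}$.
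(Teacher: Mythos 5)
Your ``compactness'' step is correct, and it is in essence the same computation that drives the paper's proof: for $a\in J_X$, the formula $\varphi_{A\rtimes_\delta\widehat{S}}\big(\delta_\iota(a)(1_{M(A)}\otimes x)\big)=\sigma^{(1)}_\iota(\varphi_A(a))(1_{M(\mathcal{K}(X))}\otimes x)$ together with $\mathcal{K}(X\rtimes_\sigma\widehat{S})=\mathcal{K}(X)\rtimes_{\sigma^{(1)}}\widehat{S}$ from Theorem~\ref{crossed product correspondences} puts the image of the left action inside $\mathcal{K}(X\rtimes_\sigma\widehat{S})$. The genuine gap is in your ``annihilation'' step. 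Your reduction rests on the claim that $\ker\varphi_{A\rtimes_\delta\widehat{S}}$ is contained in ``the reduced crossed product $(\ker\varphi_A)\rtimes\widehat{S}$''; but this object is not even well defined, since $\ker\varphi_A$ need not be $\delta$-invariant and hence carries no coaction and no crossed product. Moreover, while elements $\delta_\iota(b)(1_{M(A)}\otimes y)$ with $b\in\ker\varphi_A$ do lie in $\ker\varphi_{A\rtimes_\delta\widehat{S}}$, nothing guarantees that such elements span a dense subspace of that kernel, so the assertion that ``it suffices to show that $\delta_\iota(a)(1_{M(A)}\otimes x)$ kills $\delta_\iota(b)(1_{M(A)}\otimes y)$'' is unsupported. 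You flag this yourself as the main obstacle, but flagging it does not remove it: the argument is incomplete at exactly its one hard point.

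There are two ways to close the gap. The paper's route avoids the kernel condition entirely: by \cite[Proposition~3.3]{Kat2}, if $(\psi,\pi)$ is an \emph{injective} representation of a correspondence $(Y,B)$, then any $c\in B$ with $\pi(c)\in\psi^{(1)}(\mathcal{K}(Y))$ automatically lies in $J_Y$ (both defining conditions of $J_Y$ are pulled back through the injective $\pi$). Applying this to the injective representation $(k_X\rtimes_\sigma{\rm id}_{\widehat{S}},k_A\rtimes_\delta{\rm id}_{\widehat{S}})$ of Proposition~\ref{embedding XxShat to OxShat is a Toep.rep.}, one only needs $k_A\rtimes_\delta{\rm id}_{\widehat{S}}\big(\delta_\iota(J_X)(1_{M(A)}\otimes\widehat{S})\big)\subseteq(k_X\rtimes_\sigma{\rm id}_{\widehat{S}})^{(1)}\big(\mathcal{K}(X\rtimes_\sigma\widehat{S})\big)$, which follows from weak $\delta$-invariance (giving $\delta_\iota(J_X)(1_{M(A)}\otimes\widehat{S})\subseteq M(A\otimes\mathscr{K};J_X\otimes\mathscr{K})$), Lemma~\ref{Lemma.for.Ext.Main.Thm.of.KQR2}, the $\delta$-$\sigma^{(1)}$ equivariance, and $\mathcal{K}(X)\rtimes_{\sigma^{(1)}}\widehat{S}=\mathcal{K}(X\rtimes_\sigma\widehat{S})$ --- that is, your compactness computation transported into $\mathcal{O}_X\rtimes_\zeta\widehat{S}$. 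Alternatively, your direct approach can be repaired without inventing a crossed product of $\ker\varphi_A$: since $\varphi_{A\rtimes_\delta\widehat{S}}$ is the restriction of the strict extension $\overline{\varphi_A\otimes{\rm id}_{\mathscr{K}}}$, and the kernel of the strict extension of a nondegenerate homomorphism $\pi$ equals $M(\,\cdot\,;\ker\pi)$, exactness of $\mathscr{K}$ yields $\ker\varphi_{A\rtimes_\delta\widehat{S}}\subseteq M(A\otimes\mathscr{K};\ker\varphi_A\otimes\mathscr{K})$; then weak invariance, the identity $J_X\ker\varphi_A=0$, and nondegeneracy of $S\subseteq M(\mathscr{K})$ give $\delta_\iota(J_X)(1_{M(A)}\otimes\widehat{S})\,(\ker\varphi_A\otimes\mathscr{K})=0$, whence $\delta_\iota(J_X)(1_{M(A)}\otimes\widehat{S})\,\ker\varphi_{A\rtimes_\delta\widehat{S}}=0$ because $A\otimes\mathscr{K}$ is nondegenerate in $M(A\otimes\mathscr{K})$. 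Either repair makes the statement \eqref{Sec.5.Drop.CP.KQR2} correct; your treatment of the special case $J_X=A$ is fine as written.
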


    \begin{proof}
        The last assertion of the proposition is an immediate consequence of the first. Hence we only need to prove \eqref{Sec.5.Drop.CP.KQR2}, which will follow by \cite[Proposition~3.3]{Kat2} if we show that
            \[k_A\rtimes_\delta{\rm id}_{\widehat{S}}\big(\delta_\iota(J_X)(1_{M(A)}\otimes\widehat{S})\big)\subseteq (k_X\rtimes_\sigma{\rm id}_{\widehat{S}})^{(1)}\big(\mathcal{K}(X\rtimes_\sigma\widehat{S})\big)\]
         since the representation $(k_X\rtimes_\sigma{\rm id}_{\widehat{S}},k_A\rtimes_\delta{\rm id}_{\widehat{S}})$ is injective. Let us first note the following. By Theorem~\ref{DKQ,Cor.A.14}.(ii), we have
            \[\overline{(k_X\otimes{\rm id}_{\mathscr{K}})^{(1)}}=\overline{k_X\otimes{\rm id}_{\mathscr{K}}}^{\,(1)}\]
        on $\mathcal{K}(M_{A\otimes\mathscr{K}}(X\otimes\mathscr{K}))$. Hence
            \begin{align*}
                \overline{(k_X\otimes{\rm id}_{\mathscr{K}})^{(1)}}\big(\mathcal{K}(X\rtimes_\sigma\widehat{S})\big)
                %&=\overline{k_X\otimes{\rm id}_{\mathscr{K}}}^{\,(1)}\big(\mathcal{K}(X)\rtimes_{\sigma^{(1)}}\widehat{S}\big) \\
                &=\overline{k_X\otimes{\rm id}_{\mathscr{K}}}^{\,(1)}\big(\mathcal{K}(X\rtimes_\sigma\widehat{S})\big) \\
                &=(k_X\rtimes_\sigma{\rm id}_{\widehat{S}})^{(1)}\big(\mathcal{K}(X\rtimes_\sigma\widehat{S})\big)
            \end{align*}
        by Remark~\ref{Sec.5.Rmk.to.Cor.to.Diag} and Proposition~\ref{embedding XxShat to OxShat is a Toep.rep.}.

        In much the same way as the calculation \eqref{Sec.4.in.the.Pf.thm} in the proof of Theorem~\ref{crossed product correspondences}, we see that
            \[\delta_\iota(J_X)(1_{M(A)}\otimes\widehat{S})\subseteq M(A\otimes\mathscr{K};J_X\otimes\mathscr{K})\]
        since $J_X$ is weakly $\delta$-invariant. It therefore follows by Proposition \ref{embedding XxShat to OxShat is a Toep.rep.}, Lemma~\ref{Lemma.for.Ext.Main.Thm.of.KQR2}, and the above equality that
        \begin{multline*}
            k_A\rtimes_\delta{\rm id}_{\widehat{S}}\big(\delta_\iota(J_X)(1_{M(A)}\otimes\widehat{S})\big) \\
            \begin{aligned}
            &= \overline{k_A\otimes{\rm id}_{\mathscr{K}}}\big(\delta_\iota(J_X)(1_{M(A)}\otimes\widehat{S})\big) \\
            &= \overline{(k_X\otimes{\rm id}_{\mathscr{K}})^{(1)}}\circ\varphi_{M(A\otimes\mathscr{K})}\big(\delta_\iota(J_X)(1_{M(A)}\otimes\widehat{S})\big) \\
            &= \overline{(k_X\otimes{\rm id}_{\mathscr{K}})^{(1)}}\big(\sigma^{(1)}_\iota(\varphi_A(J_X))(1_{M(\mathcal{K}(X))}\otimes\widehat{S})\big) \\
            &\subseteq\overline{(k_X\otimes{\rm id}_{\mathscr{K}})^{(1)}}\big(\mathcal{K}(X)\rtimes_{\sigma^{(1)}}\widehat{S}\big) \\
            &=(k_X\rtimes_\sigma{\rm id}_{\widehat{S}})^{(1)}\big(\mathcal{K}(X\rtimes_\sigma\widehat{S})\big),
            \end{aligned}
            \end{multline*}
        where the third and last step come from the $\delta$-$\sigma^{(1)}$ equivariancy of $\varphi_A$ and equality $\mathcal{K}(X)\rtimes_{\sigma^{(1)}}\widehat{S}=\mathcal{K}(X\rtimes_\sigma\widehat{S})$, respectively. This establishes the proposition.
    \end{proof}

    \begin{rmk}\label{Sec.5.Improve.KQRo2}\rm
        Recall from Definition 3.1 and Lemma 3.2 of \cite{KQRo1} that a nondegenerate correspondence homomorphism $(\psi,\pi):(X,A)\rightarrow(M(Y),M(B))$ is \emph{Cuntz-Pimsner covariant} if $\psi(X)\subseteq M_B(Y)$ and $\pi(J_X)\subseteq M(B;J_Y)$. Corollary~\ref{Sec.4.jxja} and Proposition \ref{Ext.Main.Thm.of.KQR2} then assure us that the representation $(j_X^\sigma,j_A^\delta)$ is always Cuntz-Pimsner covariant since \eqref{Sec.5.Drop.CP.KQR2} is obviously equivalent to
            \[j_A^\delta(J_X)\subseteq M(A\rtimes_\delta\widehat{S};J_{X\rtimes_\sigma\widehat{S}})\]
        which was a hypothesis of \cite[Theorem 4.4]{KQRo2} for $S=C^*(G)$. Therefore, Theorem~4.4 of \cite{KQRo2} can be improved as follows: if $(\sigma,\delta)$ is a nondegenerate coaction of a locally compact group $G$ on $(X,A)$ such that $\delta(J_X)\subseteq M(A\otimes C^*(G);J_X\otimes C^*(G))$, then we always have $\mathcal{O}_X\rtimes_\zeta G\cong \mathcal{O}_{X\rtimes_\sigma G}$.
    \end{rmk}

    \begin{thm}\label{Equiv.of.cov.}
        Let $(\sigma,\delta)$ be a coaction of a reduced Hopf $C^*$-algebra $S$ on a $C^*$-cor\-re\-spon\-dence $(X,A)$ such that $J_X$ is weakly $\delta$-invariant. Then the following conditions are equivalent:
        \begin{itemize}
        \item[\rm (i)] The representation $(k_X\rtimes_\sigma{\rm id}_{\widehat{S}},k_A\rtimes_\delta{\rm id}_{\widehat{S}}):(X\rtimes_\sigma\widehat{S},A\rtimes_\delta\widehat{S})\rightarrow\mathcal{O}_X\rtimes_\zeta\widehat{S}$ is covariant.
        \item[\rm (ii)] The ideal $J_{X\rtimes_\sigma\widehat{S}}$ is contained in $M(A\otimes\mathscr{K};J_X\otimes\mathscr{K})$.
        \item[\rm (iii)] The product $J_{X\rtimes_\sigma\widehat{S}}\,(\ker\varphi_A\otimes\mathscr{K})$ is zero.
        \end{itemize}
    \end{thm}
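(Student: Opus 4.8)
The plan is to prove the cycle (ii)$\Rightarrow$(i)$\Rightarrow$(iii)$\Rightarrow$(ii), with the direct implication (ii)$\Rightarrow$(iii) dropping out of the same computation. The decisive structural input is that $\mathscr{K}=\mathcal{K}(\mathcal{H})$ is nuclear, hence exact, so that $(J_X,A,\mathscr{K})$ satisfies the slice map property and Corollary~\ref{Sec.3.prop.for.thm.in.Sec.5} applies with $C=\mathscr{K}$; in particular $J_{X\otimes\mathscr{K}}=J_X\otimes\mathscr{K}$, and exactness also gives $\ker(\varphi_A\otimes{\rm id}_{\mathscr{K}})=\ker\varphi_A\otimes\mathscr{K}$. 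Throughout I write $\varphi:=\overline{\varphi_A\otimes{\rm id}_{\mathscr{K}}}=\varphi_{M(A\otimes\mathscr{K})}$ and use repeatedly that $\varphi_{A\rtimes_\delta\widehat{S}}$ is the restriction of $\varphi$ (Theorem~\ref{crossed product correspondences}), that $\mathcal{K}(X\rtimes_\sigma\widehat{S})\subseteq M_{A\otimes\mathscr{K}}(\mathcal{K}(X\otimes\mathscr{K}))$ (Remark~\ref{Sec.5.Rmk.to.Cor.to.Diag}), and that $\overline{(k_X\otimes{\rm id}_{\mathscr{K}})^{(1)}}=(k_X\rtimes_\sigma{\rm id}_{\widehat{S}})^{(1)}$ on $\mathcal{K}(X\rtimes_\sigma\widehat{S})$, exactly as established in the proof of Proposition~\ref{Ext.Main.Thm.of.KQR2}.

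For (ii)$\Rightarrow$(i), fix $c\in J_{X\rtimes_\sigma\widehat{S}}$, so that $\varphi_{A\rtimes_\delta\widehat{S}}(c)\in\mathcal{K}(X\rtimes_\sigma\widehat{S})$ by definition of $J_{X\rtimes_\sigma\widehat{S}}$. By the three identifications just recalled, covariance of $(k_X\rtimes_\sigma{\rm id}_{\widehat{S}},k_A\rtimes_\delta{\rm id}_{\widehat{S}})$ at $c$ is precisely the equality $\overline{(k_X\otimes{\rm id}_{\mathscr{K}})^{(1)}}(\varphi(c))=\overline{k_A\otimes{\rm id}_{\mathscr{K}}}(c)$. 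Since (ii) places $c$ in $M(A\otimes\mathscr{K};J_X\otimes\mathscr{K})$, this equality is exactly the content of Lemma~\ref{Lemma.for.Ext.Main.Thm.of.KQR2} with $C=\mathscr{K}$; hence the representation is covariant.

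For (i)$\Rightarrow$(iii), take $c\in J_{X\rtimes_\sigma\widehat{S}}$ and $d\in\ker\varphi_A\otimes\mathscr{K}$. Covariance gives $\overline{k_A\otimes{\rm id}_{\mathscr{K}}}(c)=\overline{(k_X\otimes{\rm id}_{\mathscr{K}})^{(1)}}(\varphi(c))$. Multiplying on the right by $(k_A\otimes{\rm id}_{\mathscr{K}})(d)$ and using the standard identity $\psi^{(1)}(T)\pi(a)=\psi^{(1)}(T\,\varphi(a))$ for the ambient representation $\psi=\overline{k_X\otimes{\rm id}_{\mathscr{K}}}$, $\pi=\overline{k_A\otimes{\rm id}_{\mathscr{K}}}$ (valid here by strict continuity from the rank-one case), I obtain $(k_A\otimes{\rm id}_{\mathscr{K}})(cd)=\overline{(k_X\otimes{\rm id}_{\mathscr{K}})^{(1)}}\big(\varphi(c)\varphi(d)\big)$. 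But $\varphi(d)=0$ since $d\in\ker\varphi_A\otimes\mathscr{K}=\ker(\varphi_A\otimes{\rm id}_{\mathscr{K}})$, so the right side vanishes, and injectivity of $k_A\otimes{\rm id}_{\mathscr{K}}$ on $A\otimes\mathscr{K}$ forces $cd=0$. Thus $J_{X\rtimes_\sigma\widehat{S}}\,(\ker\varphi_A\otimes\mathscr{K})=0$.

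For (iii)$\Rightarrow$(ii), I would show $c\,(A\otimes\mathscr{K})\cup(A\otimes\mathscr{K})\,c\subseteq J_{X\otimes\mathscr{K}}$ for each $c\in J_{X\rtimes_\sigma\widehat{S}}$, which together with $J_{X\otimes\mathscr{K}}=J_X\otimes\mathscr{K}$ yields $c\in M(A\otimes\mathscr{K};J_X\otimes\mathscr{K})$. For $a\in A\otimes\mathscr{K}$ one checks the two defining conditions of $J_{X\otimes\mathscr{K}}$ for $ca$ (and symmetrically for $ac$): the compactness condition $\varphi(ca)=\varphi(c)\varphi(a)\in\mathcal{K}(X\otimes\mathscr{K})$ holds because $\varphi(c)=\varphi_{A\rtimes_\delta\widehat{S}}(c)\in\mathcal{K}(X\rtimes_\sigma\widehat{S})\subseteq M_{A\otimes\mathscr{K}}(\mathcal{K}(X\otimes\mathscr{K}))$; and for $e\in\ker(\varphi_A\otimes{\rm id}_{\mathscr{K}})=\ker\varphi_A\otimes\mathscr{K}$ one has $(ca)e=c(ae)=0$ since $ae$ again lies in the ideal $\ker\varphi_A\otimes\mathscr{K}$ and (iii) applies. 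The very same computation, now reading $J_X\ker\varphi_A=0$ off the definition of $J_X$, also gives (ii)$\Rightarrow$(iii). I expect this last step to be the main obstacle: it is precisely where the exactness of $\mathscr{K}$ is genuinely used, both to identify $J_{X\otimes\mathscr{K}}$ with $J_X\otimes\mathscr{K}$ and to compute $\ker(\varphi_A\otimes{\rm id}_{\mathscr{K}})$, and it demands careful bookkeeping within the ambient multiplier correspondence $M_{A\otimes\mathscr{K}}(X\otimes\mathscr{K})$, whereas the other two implications are essentially formal consequences of Lemma~\ref{Lemma.for.Ext.Main.Thm.of.KQR2} and the $\psi^{(1)}$ identity.
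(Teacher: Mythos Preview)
Your proof is correct and largely parallels the paper's, but with one genuine difference worth noting. For (ii)$\Rightarrow$(i) and (iii)$\Rightarrow$(ii) you argue essentially as the paper does (the latter is literally the same computation). The divergence is in closing the cycle: the paper proves (i)$\Rightarrow$(ii) directly, invoking Katsura's characterization \cite[Proposition~5.14]{Kat2} that for an injective covariant representation $(\psi,\pi)$ one has $J=\pi^{-1}(\psi^{(1)}(\mathcal{K}))$, applied simultaneously to $(k_X\rtimes_\sigma{\rm id},k_A\rtimes_\delta{\rm id})$ and to the ambient $(\overline{k_X\otimes{\rm id}_{\mathscr{K}}},\overline{k_A\otimes{\rm id}_{\mathscr{K}}})$ on $M_{A\otimes\mathscr{K}}(X\otimes\mathscr{K})$; this yields the conceptually clean inclusion $J_{X\rtimes_\sigma\widehat{S}}\subseteq J_{M_{A\otimes\mathscr{K}}(X\otimes\mathscr{K})}\subseteq M(A\otimes\mathscr{K};J_X\otimes\mathscr{K})$. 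Your (i)$\Rightarrow$(iii) instead multiplies the covariance identity by $(k_A\otimes{\rm id}_{\mathscr{K}})(d)$ for $d\in\ker\varphi_A\otimes\mathscr{K}$ and uses the elementary relation $\overline{\psi^{(1)}}(T)\pi(a)=\psi^{(1)}(T\varphi(a))$ together with injectivity of $k_A\otimes{\rm id}_{\mathscr{K}}$. This is more hands-on but avoids the external input from \cite{Kat2}; the paper's route buys a structural statement (the comparison of the two Katsura ideals) that you do not obtain, but for the theorem as stated your argument is equally complete.
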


    \begin{proof}
        ${\rm (i)}\Leftrightarrow{\rm (ii)}$: Suppose (i). Since $(k_X\rtimes_\sigma{\rm id}_{\widehat{S}},k_A\rtimes_\delta{\rm id}_{\widehat{S}})$ is injective, we have
            \[J_{X\rtimes_\sigma\widehat{S}}=(k_A\rtimes_\sigma{\rm id}_{\widehat{S}})^{-1}\big((k_X\rtimes_\sigma{\rm id}_{\widehat{S}})^{(1)}(\mathcal{K}(X\rtimes_\sigma\widehat{S}))\big)\]
        by the comment below \cite[Proposition 5.14]{Kat2}. The same reason shows
            \[J_{M_{A\otimes\mathscr{K}}(X\otimes\mathscr{K})}=(\overline{k_A\otimes{\rm id}_{\mathscr{K}}})^{-1}\big(\overline{k_X\otimes{\rm id}_{\mathscr{K}}}^{\,(1)}\big(\mathcal{K}(M_{A\otimes\mathscr{K}}(X\otimes\mathscr{K}))\big)\big)\]
        since $\mathscr{K}$ is nuclear and then $(\overline{k_X\otimes{\rm id}_{\mathscr{K}}},\overline{k_A\otimes{\rm id}_{\mathscr{K}}})$ is covariant by Corollary~\ref{Sec.3.prop.for.thm.in.Sec.5}. It thus follows that $J_{X\rtimes_\sigma\widehat{S}}\subseteq J_{M_{A\otimes\mathscr{K}}(X\otimes\mathscr{K})}$ by Remark~\ref{Sec.5.Rmk.to.Cor.to.Diag} and Proposition \ref{embedding XxShat to OxShat is a Toep.rep.}. But, the latter is contained in $M(A\otimes\mathscr{K};J_X\otimes\mathcal{K})$ again by Corollary \ref{Sec.3.prop.for.thm.in.Sec.5}. This proves ${\rm (i)}\Rightarrow{\rm (ii)}$. Conversely, suppose (ii). Restricting the equality \eqref{Covariancy.of.the.Strict.Ext.1} of Lemma~\ref{Lemma.for.Ext.Main.Thm.of.KQR2} to the subalgebra $J_{X\rtimes_{\sigma}\widehat{S}}$, we can write
            \[(k_X\rtimes_\sigma{\rm id}_{\widehat{S}})^{(1)}\circ\varphi_{A\rtimes_\delta\widehat{S}}=k_A\rtimes_\delta{\rm id}_{\widehat{S}},\]
        which verifies ${\rm (ii)}\Rightarrow{\rm (i)}$.

        ${\rm(ii)}\Leftrightarrow{\rm(iii)}$: Assuming (ii) we have
            \begin{align*}
            J_{X\rtimes_{\sigma}\widehat{S}}(\ker\varphi_A\otimes\mathscr{K})
            &=J_{X\rtimes_{\sigma}\widehat{S}}(A\otimes\mathscr{K})(\ker\varphi_A\otimes\mathscr{K}) \\
            &\subseteq(J_X\otimes\mathscr{K})(\ker\varphi_A\otimes\mathscr{K})=0,
            \end{align*}
        and hence we get (iii). Finally, we always have
            \[\varphi_{A\otimes\mathscr{K}}\big((A\otimes\mathscr{K})J_{X\rtimes_\sigma\widehat{S}}\big)
            \subseteq\varphi_{A\otimes\mathscr{K}}(A\otimes\mathscr{K})\mathcal{K}(X\rtimes_\sigma\widehat{S})
            \subseteq \mathcal{K}(X\otimes\mathscr{K})\]
        by Remark \ref{Sec.5.Rmk.to.Cor.to.Diag}. Since $\ker\varphi_{A\otimes\mathscr{K}}=\ker(\varphi_A\otimes{\rm id}_{\mathscr{K}})=\ker\varphi_A\otimes\mathscr{K}$ by the exactness of $\mathscr{K}$, (iii) implies
            \[\big((A\otimes\mathscr{K})J_{X\rtimes_\sigma\widehat{S}}\big)\ker\varphi_{A\otimes\mathscr{K}}
            =(A\otimes\mathscr{K})\big(J_{X\rtimes_\sigma\widehat{S}}(\ker\varphi_A\otimes\mathscr{K})\big)=0.\]
        Therefore $(A\otimes\mathscr{K})J_{X\rtimes_\sigma\widehat{S}}\subseteq J_{X\otimes\mathscr{K}}$. But $J_{X\otimes\mathscr{K}}=J_X\otimes\mathscr{K}$ by Corollary~\ref{Sec.3.prop.for.thm.in.Sec.5}, which proves ${\rm(iii)}\Rightarrow{\rm(ii)}$.
    \end{proof}

    \begin{cor}\label{Sec.5.Cor.varphi.inj.}
        Let $(\sigma,\delta)$ be a coaction of $S$ on $(X,A)$ such that $J_X$ is weakly $\delta$-invariant. Assume that either (i) the ideal $J_{X\rtimes_\sigma\widehat{S}}$ of $A\rtimes_\delta\widehat{S}$ is generated by $\delta_\iota(J_X)$  %$J_{X\rtimes_\sigma\widehat{S}}=\overline{(1_{M(A)}\otimes\widehat{S})\delta_\iota(J_X)(1_{M(A)}\otimes\widehat{S})}$
        or (ii) $\varphi_A$ is injective. Then $(k_X\rtimes_\sigma{\rm id}_{\widehat{S}},k_A\rtimes_\delta{\rm id}_{\widehat{S}})$ is covariant.
    \end{cor}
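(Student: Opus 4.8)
The plan is to deduce the corollary directly from Theorem~\ref{Equiv.of.cov.} by checking, in each of the two cases, that one of its three equivalent conditions is satisfied; since the genuine analytic work has already been absorbed into Theorem~\ref{Equiv.of.cov.} and Proposition~\ref{Ext.Main.Thm.of.KQR2}, what remains is to identify the right condition in each case and verify it.

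Case (ii) is the quicker of the two, and I would dispatch it first by verifying condition (iii) of Theorem~\ref{Equiv.of.cov.}. If $\varphi_A$ is injective then $\ker\varphi_A=0$, so $\ker\varphi_A\otimes\mathscr{K}=0$ and the product $J_{X\rtimes_\sigma\widehat{S}}(\ker\varphi_A\otimes\mathscr{K})$ vanishes trivially. Condition (iii) of Theorem~\ref{Equiv.of.cov.} therefore holds, and the equivalence given there yields that $(k_X\rtimes_\sigma{\rm id}_{\widehat{S}},k_A\rtimes_\delta{\rm id}_{\widehat{S}})$ is covariant.

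For case (i) I would instead verify condition (ii) of Theorem~\ref{Equiv.of.cov.}, i.e.\ that $J_{X\rtimes_\sigma\widehat{S}}\subseteq M(A\otimes\mathscr{K};J_X\otimes\mathscr{K})$. The starting point is the inclusion
    \[\delta_\iota(J_X)(1_{M(A)}\otimes\widehat{S})\subseteq M(A\otimes\mathscr{K};J_X\otimes\mathscr{K}),\]
which is exactly the containment established along the way in the proof of Proposition~\ref{Ext.Main.Thm.of.KQR2} using the weak $\delta$-invariance of $J_X$. Now $M(A\otimes\mathscr{K};J_X\otimes\mathscr{K})$ is the strict closure of the ideal $J_X\otimes\mathscr{K}$ and hence is itself a closed two-sided ideal of $M(A\otimes\mathscr{K})$, while $A\rtimes_\delta\widehat{S}$ is a $C^*$-subalgebra of $M(A\otimes\mathscr{K})$. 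Consequently, multiplying the generators $\delta_\iota(J_X)(1_{M(A)}\otimes\widehat{S})$ on either side by elements of $A\rtimes_\delta\widehat{S}\subseteq M(A\otimes\mathscr{K})$ and passing to closed spans keeps everything inside the ambient ideal, so the closed ideal of $A\rtimes_\delta\widehat{S}$ generated by $\delta_\iota(J_X)$ is contained in $M(A\otimes\mathscr{K};J_X\otimes\mathscr{K})$. Under hypothesis (i) this generated ideal is precisely $J_{X\rtimes_\sigma\widehat{S}}$, whence condition (ii) of Theorem~\ref{Equiv.of.cov.} holds and covariance again follows.

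I do not expect a serious obstacle: Theorem~\ref{Equiv.of.cov.} packages the difficulty, and the corollary is essentially a matter of matching each hypothesis to the convenient equivalent condition. The only step requiring a moment's care is the ideal-propagation argument in case (i)---one must be sure that enlarging from the generating set $\delta_\iota(J_X)$ to the full ideal of $A\rtimes_\delta\widehat{S}$ does not leave $M(A\otimes\mathscr{K};J_X\otimes\mathscr{K})$---and this is automatic precisely because the latter is an ideal of $M(A\otimes\mathscr{K})$ already containing the generators.
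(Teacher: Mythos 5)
Your proposal is correct and takes essentially the same route as the paper: both cases are reduced to Theorem~\ref{Equiv.of.cov.}, with hypothesis (ii) matched to condition (iii) (trivially, since $\ker\varphi_A=0$) and hypothesis (i) matched to condition (ii). The only difference is cosmetic: for case (i) the paper redoes the nondegeneracy/weak-invariance computation directly to get $J_{X\rtimes_\sigma\widehat{S}}\,(A\otimes\mathscr{K})\subseteq J_X\otimes\mathscr{K}$, whereas you reuse the containment $\delta_\iota(J_X)(1_{M(A)}\otimes\widehat{S})\subseteq M(A\otimes\mathscr{K};J_X\otimes\mathscr{K})$ already established in the proof of Proposition~\ref{Ext.Main.Thm.of.KQR2} and propagate it via the fact that $M(A\otimes\mathscr{K};J_X\otimes\mathscr{K})$ is a closed two-sided ideal of $M(A\otimes\mathscr{K})$ --- both arguments rest on the same underlying weak-invariance calculation.
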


    \begin{proof}
        Assume (i), that is, $J_{X\rtimes_\sigma\widehat{S}}=\overline{(1_{M(A)}\otimes\widehat{S})\delta_\iota(J_X)(1_{M(A)}\otimes\widehat{S})}$. The nondegeneracy of $S$ and $\widehat{S}$ shows that
        \begin{align*}
            J_{X\rtimes_\sigma\widehat{S}}\,(A\otimes\mathscr{K}) %&=\overline{(A\rtimes_\delta\widehat{S})\delta_\iota(J_X)(A\rtimes_\delta\widehat{S})}\,(A\otimes\mathscr{K}) \\
            &=\overline{(1_{M(A)}\otimes\widehat{S})\delta_\iota(J_X)(1_{M(A)}\otimes\widehat{S})\,(A\otimes\mathscr{K})} \\
            &=\overline{(1_{M(A)}\otimes\widehat{S})\delta_\iota(J_X)\,(A\otimes\mathscr{K})} \\
            &=\overline{(1_{M(A)}\otimes\widehat{S})\delta_\iota(J_X)(1_{M(A)}\otimes S)\,(A\otimes\mathscr{K})} \\
            &\subseteq\overline{(1_{M(A)}\otimes\widehat{S})(J_X\otimes S)(A\otimes\mathscr{K})} =J_X\otimes\mathscr{K},
        \end{align*}
        in which the last inclusion follows from the weak $\delta$-invariancy of $J_X$. Hence we get the equivalent condition (ii) in Theorem~\ref{Equiv.of.cov.}. Evidently, assumption (ii) implies (iii) in Theorem~\ref{Equiv.of.cov.}.
    \end{proof}

    \begin{cor}\label{Sec.5.Cor.delta.trivial}
        Let $(\sigma,\delta)$ be a coaction of $S$ on $(X,A)$ such that $\delta$ is trivial, that is, $\delta(a)=a\otimes1_{M(S)}$ for $a\in A$. If the triple $(J_X,A,\widehat{S})$ satisfies the slice map property, then $(k_X\rtimes_\sigma{\rm id}_{\widehat{S}},k_A\rtimes_\delta{\rm id}_{\widehat{S}})$ is covariant. Moreover, $J_{X\rtimes_\sigma\widehat{S}}=J_X\otimes\widehat{S}$.
    \end{cor}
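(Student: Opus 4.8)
The plan is to read off both conclusions from Theorem~\ref{Equiv.of.cov.} together with the slice map property, after first recording the simplifications forced by the triviality of $\delta$. Since $\delta(a)=a\otimes 1_{M(S)}$ and the strict extension of the nondegenerate inclusion $\iota_S$ is unital, I would note that $\delta_\iota(a)=a\otimes 1_{M(\mathscr{K})}$, whence $A\rtimes_\delta\widehat{S}=\overline{(A\otimes 1_{M(\mathscr{K})})(1_{M(A)}\otimes\widehat{S})}=A\otimes\widehat{S}$. Moreover, by the left action formula of Theorem~\ref{crossed product correspondences}, $\varphi_{A\rtimes_\delta\widehat{S}}(b\otimes z)=\overline{\sigma^{(1)}_\iota}(\varphi_A(b))(1_{M(\mathcal{K}(X))}\otimes z)$ vanishes whenever $\varphi_A(b)=0$; hence $\ker\varphi_A\otimes\widehat{S}\subseteq\ker\varphi_{A\rtimes_\delta\widehat{S}}$.

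For covariance I would verify condition (iii) of Theorem~\ref{Equiv.of.cov.}. Let $c\in J_{X\rtimes_\sigma\widehat{S}}$. By the defining property of the ideal $J$, $c$ annihilates $\ker\varphi_{A\rtimes_\delta\widehat{S}}$, so in particular $c\,(\ker\varphi_A\otimes\widehat{S})=0$ by the inclusion just noted. Multiplying on the right by $1_{M(A)}\otimes\mathscr{K}$ inside $M(A\otimes\mathscr{K})$ and using the nondegeneracy $\overline{\widehat{S}\mathscr{K}}=\mathscr{K}$ gives $c\,(\ker\varphi_A\otimes\mathscr{K})=0$. As $c$ is arbitrary, $J_{X\rtimes_\sigma\widehat{S}}(\ker\varphi_A\otimes\mathscr{K})=0$, which is condition (iii), so covariance follows. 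Note that this step does not even invoke the slice map hypothesis.

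It then remains to identify $J_{X\rtimes_\sigma\widehat{S}}$. One inclusion is immediate: Proposition~\ref{Ext.Main.Thm.of.KQR2} gives $\delta_\iota(J_X)(1_{M(A)}\otimes\widehat{S})\subseteq J_{X\rtimes_\sigma\widehat{S}}$, and the left-hand closed span is $J_X\otimes\widehat{S}$, so $J_X\otimes\widehat{S}\subseteq J_{X\rtimes_\sigma\widehat{S}}$. For the reverse inclusion I would use that covariance is equivalent to condition (ii), so $J_{X\rtimes_\sigma\widehat{S}}\subseteq M(A\otimes\mathscr{K};J_X\otimes\mathscr{K})$; since also $J_{X\rtimes_\sigma\widehat{S}}\subseteq A\otimes\widehat{S}$, it suffices to show
\[ M(A\otimes\mathscr{K};J_X\otimes\mathscr{K})\cap(A\otimes\widehat{S})\subseteq J_X\otimes\widehat{S}. \]
Here the slice map property enters through the description of $F(J_X,A,\widehat{S})=J_X\otimes\widehat{S}$ in Remarks~\ref{S.M.P.}.(2): it is enough to prove $({\rm id}_A\otimes\omega)(c)\in J_X$ for every $\omega\in\mathcal{L}(\mathcal{H})_*$ whenever $c\in A\otimes\widehat{S}$ satisfies $c(A\otimes\mathscr{K})\subseteq J_X\otimes\mathscr{K}$.

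The technical heart, and the step I expect to be the main obstacle, is this last slicing computation, since the condition $c(A\otimes\mathscr{K})\subseteq J_X\otimes\mathscr{K}$ lives in the $\mathscr{K}$-leg whereas the slice map criterion tests the $\widehat{S}$-leg. For $\omega\in\mathcal{L}(\mathcal{H})_*$, $k\in\mathscr{K}$, and $a\in A$ one has $({\rm id}\otimes\omega)\big(c\,(a\otimes k)\big)=({\rm id}\otimes\omega_k)(c)\cdot a$, where $\omega_k(T):=\omega(Tk)$. Since $c(a\otimes k)\in J_X\otimes\mathscr{K}$ slices into $J_X$, this yields $({\rm id}\otimes\omega_k)(c)\,a\in J_X$ for all $a$, and letting $a$ run through an approximate unit of $A$ places $({\rm id}\otimes\omega_k)(c)$ in the closed ideal $J_X$. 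As $\omega$ ranges over $\mathcal{L}(\mathcal{H})_*$ and $k$ over $\mathscr{K}$, the functionals $\omega_k$ are norm-dense in $\mathcal{L}(\mathcal{H})_*$ (they correspond to the trace-class operators $k\rho$), so by continuity of $\omega\mapsto({\rm id}\otimes\omega)(c)$ and closedness of $J_X$ I obtain $({\rm id}\otimes\omega)(c)\in J_X$ for all $\omega\in\mathcal{L}(\mathcal{H})_*$. This places $c$ in $J_X\otimes\widehat{S}$ and hence gives $J_{X\rtimes_\sigma\widehat{S}}=J_X\otimes\widehat{S}$.
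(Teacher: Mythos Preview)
Your proof is correct and follows essentially the same route as the paper: verify condition~(iii) of Theorem~\ref{Equiv.of.cov.} for covariance, then use condition~(ii) together with Remarks~\ref{S.M.P.}.(2) and Proposition~\ref{Ext.Main.Thm.of.KQR2} to pin down $J_{X\rtimes_\sigma\widehat{S}}$. The slicing argument you give (with the functionals $\omega_k$ and the approximate-unit step) is exactly the paper's computation, phrased from the right instead of the left and with the density of $\{\omega_k\}$ made explicit rather than left implicit.

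There is one point where your argument is actually cleaner than the paper's. For covariance you only use the inclusion $\ker\varphi_A\otimes\widehat{S}\subseteq\ker\varphi_{A\rtimes_\delta\widehat{S}}$, which follows immediately from $\overline{\sigma^{(1)}}\circ\varphi_A=\overline{\varphi_A\otimes{\rm id}_S}\circ\delta$ and the triviality of $\delta$; the paper instead asserts the \emph{equality} $\ker\varphi_{A\rtimes_\delta\widehat{S}}=\ker\varphi_A\otimes\widehat{S}$ and cites Remarks~\ref{S.M.P.}.(1), which concerns the slice map property for an ideal of $A$---but the hypothesis is stated for $J_X$, not for $\ker\varphi_A$. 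Your observation that covariance does not require the slice map hypothesis at all sidesteps this entirely, and is worth keeping.
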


    \begin{proof}
        Since $\delta$ is trivial, $J_X$ is evidently weakly $\delta$-invariant. Hence the representation $(k_X\rtimes_\sigma{\rm id}_{\widehat{S}},k_A\rtimes_\delta{\rm id}_{\widehat{S}})$ on $\mathcal{O}_X\rtimes_\zeta\widehat{S}$ makes sense by Proposition~\ref{embedding XxShat to OxShat is a Toep.rep.}. To show that it is covariant, we check the equivalent condition (iii) in Theorem~\ref{Equiv.of.cov.}. First note that $\varphi_{A\rtimes_\delta\widehat{S}}=\varphi_{A\otimes\widehat{S}}=\varphi_A\otimes{\rm id}_{\widehat{S}}$. Then
        \begin{equation*}\label{Ker.for.Trivial}
            \ker\varphi_{A\rtimes_\delta\widehat{S}}=\ker(\varphi_A\otimes{\rm id}_{\widehat{S}})=\ker\varphi_A\otimes\widehat{S}
        \end{equation*}
        by Remarks \ref{S.M.P.}.(1). Since $\widehat{S}$ is a nondegenerate subalgebra of $\mathcal{L}(\mathcal{H})$, it follows that
            \[J_{X\rtimes_\sigma\widehat{S}}(\ker\varphi_A\otimes\mathscr{K})
            =\big(J_{X\rtimes_\sigma\widehat{S}}(\ker\varphi_A\otimes\widehat{S})\big)(1_{M(A)}\otimes\mathscr{K})=0,\]
        and therefore $(k_X\rtimes_\sigma{\rm id}_{\widehat{S}},k_A\rtimes_\delta{\rm id}_{\widehat{S}})$ is covariant.

        Let $\omega\in\mathcal{L}(\mathcal{H})_*$ and $T\in\mathscr{K}$. Applying the slice map ${\rm id}_A\otimes(\omega T)$ to $J_{X\rtimes_\sigma\widehat{S}}$ and then multiplying $a\in A$ yields
            \[a({\rm id}_A\otimes(\omega T))(J_{X\rtimes_\sigma\widehat{S}})=({\rm id}_A\otimes\omega)\big((a\otimes T)J_{X\rtimes_\sigma\widehat{S}}\big)\subseteq J_X,\]
        in which the last inclusion is due to the equivalent condition (ii) of Theorem~\ref{Equiv.of.cov.}. We thus have $({\rm id}_A\otimes\omega)(J_{X\rtimes_\sigma\widehat{S}})\subseteq J_X$ for $\omega\in\mathcal{L}(\mathcal{H})_*$, and conclude by Remarks~\ref{S.M.P.}.(2) that $J_{X\rtimes_\sigma\widehat{S}}\subseteq F(J_X,A,\widehat{S})=J_X\otimes\widehat{S}$. The converse follows from Proposition~\ref{Ext.Main.Thm.of.KQR2}.
    \end{proof}

    We now state and prove our main theorem.

    \begin{thm}\label{Main.Theorem.}
        Let $(\sigma,\delta)$ be a coaction of a reduced Hopf $C^*$-algebra $S$ on a $C^*$-cor\-re\-spon\-dence $(X,A)$ such that $J_X$ is weakly $\delta$-invariant. Suppose that the representation $(k_X\rtimes_\sigma{\rm id}_{\widehat{S}},k_A\rtimes_\delta{\rm id}_{\widehat{S}})$ is covariant. Then the integrated form
            \[(k_X\rtimes_\sigma{\rm id}_{\widehat{S}})\times(k_A\rtimes_\delta{\rm id}_{\widehat{S}}):\mathcal{O}_{X\rtimes_\sigma\widehat{S}}\rightarrow\mathcal{O}_X\rtimes_\zeta\widehat{S}\]
        is a surjective isomorphism.
    \end{thm}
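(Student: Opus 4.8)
The plan is to show that the integrated form $\Psi:=(k_X\rtimes_\sigma\id_{\widehat{S}})\times(k_A\rtimes_\delta\id_{\widehat{S}})$ is a surjective isomorphism. By the covariance hypothesis together with the injectivity established in Proposition~\ref{embedding XxShat to OxShat is a Toep.rep.}, $\Psi$ is a well-defined homomorphism from $\mathcal{O}_{X\rtimes_\sigma\widehat{S}}$ into $\mathcal{O}_X\rtimes_\zeta\widehat{S}$; it remains to prove that it is onto and one-to-one. Surjectivity will come from a generation argument, and injectivity from the gauge-invariant uniqueness theorem \cite[Theorem~6.4]{Kat}.

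For surjectivity, note that by \eqref{Sec.5.Rep.1} the image $\Psi(\mathcal{O}_{X\rtimes_\sigma\widehat{S}})$ is a $C^*$-subalgebra containing $\zeta_\iota(k_X(\xi))(1\otimes x)$ and $\zeta_\iota(k_A(a))(1\otimes x)$ for all $\xi\in X$, $a\in A$, $x\in\widehat{S}$, together with their adjoints (the third line of \eqref{Sec.5.Rep.1} moreover accounts for $(1\otimes x)\zeta_\iota(k_X(\xi))$). Since $\mathcal{O}_X\rtimes_\zeta\widehat{S}=\overline{\zeta_\iota(\mathcal{O}_X)(1\otimes\widehat{S})}$ and $\mathcal{O}_X$ is generated by $k_X(X)$ and $k_A(A)$, I would show that $\zeta_\iota(b)(1\otimes x)$ lies in the image for every $b$ in the $*$-algebra generated by $k_X(X)$ and $k_A(A)$. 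The one nontrivial point is to slide a factor $1\otimes\widehat{S}$ across $\zeta_\iota(\cdot)$: applying Lemma~\ref{Lemma similar to BS's} to the identity correspondence over $A$, and pushing it forward through $\overline{k_A\otimes\id_{\mathscr{K}}}$ via \eqref{Sec.5.zeta.iota} and the diagram \eqref{Sec.3.diagram}, yields $\overline{(1\otimes\widehat{S})\zeta_\iota(k_A(A))}=\overline{\zeta_\iota(k_A(A))(1\otimes\widehat{S})}$ and, applying the lemma directly to $(X,A)$, the analogous relation for $k_X(X)$. These sliding relations let products of generators be recombined into the required form $\zeta_\iota(\,\cdot\,)(1\otimes\widehat{S})$, whence surjectivity of $\Psi$ follows.

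For injectivity I would invoke \cite[Theorem~6.4]{Kat}: since $(k_X\rtimes_\sigma\id_{\widehat{S}},k_A\rtimes_\delta\id_{\widehat{S}})$ is injective and, by hypothesis, covariant, it suffices to produce a gauge action on its image. Let $\beta:\mathbb{T}\rightarrow\mathrm{Aut}(\mathcal{O}_X)$ be the gauge action. The computation in the last part of the proof of Theorem~\ref{induced coactions on O_X} shows that $\overline{\beta_z\otimes\id_S}\circ\zeta=\zeta\circ\beta_z$ holds on the generators $k_X(X)$ and $k_A(A)$, hence on all of $\mathcal{O}_X$; composing with $\overline{\id\otimes\iota_S}$ gives $\overline{\beta_z\otimes\id_{\mathscr{K}}}\circ\zeta_\iota=\zeta_\iota\circ\beta_z$. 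Consequently $\overline{\beta_z\otimes\id_{\mathscr{K}}}$ sends each $\zeta_\iota(b)(1\otimes x)$ to $\zeta_\iota(\beta_z(b))(1\otimes x)$, so its restriction defines a point-norm continuous action $\widehat{\beta}$ of $\mathbb{T}$ on $\mathcal{O}_X\rtimes_\zeta\widehat{S}$.

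Finally I would verify that $\widehat{\beta}$ is a gauge action for the representation. Using \eqref{Sec.5.Rep.1} together with $\beta_z(k_X(\xi))=z\,k_X(\xi)$ and $\beta_z(k_A(a))=k_A(a)$, one computes $\widehat{\beta}_z\big(k_X\rtimes_\sigma\id_{\widehat{S}}(\Xi)\big)=z\,k_X\rtimes_\sigma\id_{\widehat{S}}(\Xi)$ and $\widehat{\beta}_z\big(k_A\rtimes_\delta\id_{\widehat{S}}(c)\big)=k_A\rtimes_\delta\id_{\widehat{S}}(c)$ on the generating elements $\Xi=\sigma_\iota(\xi)\cdot(1\otimes x)$ and $c=\delta_\iota(a)(1\otimes x)$, and hence on all of $X\rtimes_\sigma\widehat{S}$ and $A\rtimes_\delta\widehat{S}$ by density and continuity. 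Thus the representation admits a gauge action, and \cite[Theorem~6.4]{Kat} shows $\Psi$ is injective; combined with surjectivity this yields the isomorphism. I expect the surjectivity step to be the main obstacle, since the sliding of $1\otimes\widehat{S}$ past $\zeta_\iota$ must be justified carefully from the reduced-crossed-product relations of Lemma~\ref{Lemma similar to BS's}; the gauge-action half is essentially forced once the $\beta$-equivariance of $\zeta$—already implicit in the proof of Theorem~\ref{induced coactions on O_X}—is recorded.
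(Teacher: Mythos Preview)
Your overall architecture matches the paper's: prove surjectivity directly, then deduce injectivity from the gauge-invariant uniqueness theorem. The injectivity half is essentially identical to the paper's argument---restrict $\overline{\beta_z\otimes\id_{\mathscr{K}}}$ to $\mathcal{O}_X\rtimes_\zeta\widehat{S}$ via the $\beta$-equivariance of $\zeta$ already established in Theorem~\ref{induced coactions on O_X}.

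For surjectivity the paper takes a different route from yours. It argues by induction on word length in $k_X(X)$, but the inductive step uses the multiplier embedding $(j_X^\sigma,j_A^\delta)$ of Corollary~\ref{Sec.4.jxja}: one shows that $\overline{\Psi}\big(\overline{k_{X\rtimes_\sigma\widehat{S}}}(j_X^\sigma(\xi))\big)=j_{\mathcal{O}_X}^\zeta(k_X(\xi))$ in $M(\mathcal{O}_X\rtimes_\zeta\widehat{S})$, so that right multiplication by $\zeta_\iota(k_X(\xi))$ on the target corresponds to right multiplication by a multiplier of $\mathcal{O}_{X\rtimes_\sigma\widehat{S}}$ on the source, and hence preserves the image of $\Psi$. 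Your sliding approach can also be made rigorous and avoids this multiplier machinery, but as written the ``recombination'' step has a gap. After sliding you are left with expressions of the form $[\text{element of the image}]\cdot\zeta_\iota(k_X(\eta))$, and the second factor is only a multiplier of $\mathcal{O}_X\rtimes_\zeta\widehat{S}$; nothing you have said explains why this multiplier preserves $\Psi(\mathcal{O}_{X\rtimes_\sigma\widehat{S}})$. The missing ingredient is Cohen--Hewitt factorization in $\widehat{S}$: writing each sliding coefficient as $y=y'y''$ lets you split such a product as $[\zeta_\iota(\,\cdots\,)(1\otimes y')]\cdot[(1\otimes y'')\zeta_\iota(k_X(\eta))]$, now a product of two elements already known to lie in the image (the first by the inductive hypothesis on shorter words, the second by the third line of~\eqref{Sec.5.Rep.1}). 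With this made explicit your induction closes and is arguably more elementary than the paper's; the paper's multiplier identity, on the other hand, is conceptually cleaner and connects the canonical maps $j_X^\sigma$ and $j_{\mathcal{O}_X}^\zeta$ directly.
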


    \begin{proof}
        Set $\Psi=(k_X\rtimes_\sigma{\rm id}_{\widehat{S}})\times(k_A\rtimes_\delta{\rm id}_{\widehat{S}})$. Note that the embedding $k_A\rtimes_\delta{\rm id}_{\widehat{S}}$ is clearly nondegenerate, and hence $\Psi$ is also nondegenerate.

        We claim that $\Psi(\mathcal{O}_{X\rtimes_\sigma\widehat{S}})$ contains all the elements of the form
            \[(1_{M(\mathcal{O}_X)}\otimes x)\big(\zeta_\iota\big(k_X(\xi_1)\cdots k_X(\xi_n)k_X(\eta_m)^*\cdots k_X(\eta_1)^*\big)\big)(1_{M(\mathcal{O}_X)}\otimes y)\]
        for nonnegative integers $m$ and $n$, vectors $\xi_1,\ldots,\xi_n,\eta_1,\ldots,\eta_m\in X$, and $x,y\in\widehat{S}$. This will prove that $\Psi$ is surjective  (\cite[Proposition 2.7]{Kat}). Since
            \[\zeta_\iota(k_A(A))(1_{M(\mathcal{O}_X)}\otimes\widehat{S})\subseteq\Psi(\mathcal{O}_{X\rtimes_\sigma\widehat{S}})\]
        by \eqref{Sec.5.Rep.1} of Proposition \ref{embedding XxShat to OxShat is a Toep.rep.}, we only show by considering adjoints that
            \begin{equation}\label{Sec.5.Pf.Main.Thm}
                (1_{M(\mathcal{O}_X)}\otimes x)\zeta_\iota(k_X(\xi_1)\cdots k_X(\xi_n))\in\Psi(\mathcal{O}_{X\rtimes_\sigma\widehat{S}})
            \end{equation}
        for positive integers $n$, vectors $\xi_1,\ldots,\xi_n\in X$, and $x\in\widehat{S}$. We now proceed by induction on $n$. For $n=1$, \eqref{Sec.5.Pf.Main.Thm} follows from the last equality of \eqref{Sec.5.Rep.1}. Suppose that \eqref{Sec.5.Pf.Main.Thm} is true for an $n$. Let $\xi,\xi_1,\ldots,\xi_n$ be $n+1$ vectors in $X$ and $x\in\widehat{S}$. Take an element $C\in\mathcal{O}_{X\rtimes_\sigma\widehat{S}}$ such that
            \[\Psi(C)=(1_{M(\mathcal{O}_X)}\otimes x)\zeta_\iota\big(k_X(\xi_1)k_X(\xi_2)\cdots k_X(\xi_n)\big).\]
        By Remarks \ref{Sec.4.Rmk.for.Pf.Main.Thm}.(2), we have
            \[\overline{k_{X\rtimes_\sigma\widehat{S}}}(j_X^\sigma(\xi))\in M_{A\rtimes_\delta\widehat{S}}(\mathcal{O}_{X\rtimes_\sigma\widehat{S}}).\]
        We claim that
            \begin{equation}\label{Sec.5.Pf.Main.Thm-2}
                \overline{\Psi}\big(\overline{k_{X\rtimes_\sigma\widehat{S}}}(j_X^\sigma(\xi))\big)=j_{\mathcal{O}_X}^\zeta(k_X(\xi)),
            \end{equation}
        where $j_{\mathcal{O}_X}^\zeta:\mathcal{O}_X\rightarrow M(\mathcal{O}_X\rtimes_\zeta\widehat{S})$ is the canonical homomorphism such that $j_{\mathcal{O}_X}^\zeta(c)D=\zeta_\iota(c)D$ for $c\in\mathcal{O}_X$ and $D\in\mathcal{O}_X\rtimes_\zeta\widehat{S}$. In fact, for
            \[v=\Psi\big(k_{A\rtimes_\delta\widehat{S}}\big(\delta_\iota(a)(1_{M(A)}\otimes x)\big)\big)=\zeta_\iota(k_A(a))(1_{M(\mathcal{O}_X)}\otimes x),\]
        we have
            \begin{align*}
                \overline{\Psi}\big(\overline{k_{X\rtimes_\sigma\widehat{S}}}(j_X^\sigma(\xi))\big)\,v
                &=\Psi\big(\overline{k_{X\rtimes_\sigma\widehat{S}}}(j_X^\sigma(\xi))\,k_{A\rtimes_\delta\widehat{S}}\big(\delta_\iota(a)(1_{M(A)}\otimes x)\big)\big) \\
                &=\Psi\big(k_{X\rtimes_\sigma\widehat{S}}\big(j_X^\sigma(\xi)\cdot(\delta_\iota(a)(1_{M(A)}\otimes x))\big)\big) \\
                %&=\overline{\Psi}\big(k_{X\rtimes_\sigma\widehat{S}}\big(\sigma_\iota(\xi)\cdot(\delta_\iota(a)(1_{M(A)}\otimes x))\big)\big) \\
                &=\Psi\big(k_{X\rtimes_\sigma\widehat{S}}\big(\sigma_\iota(\xi\cdot a)\cdot(1_{M(A)}\otimes x)\big)\big) \\
                &=\zeta_\iota\big(k_X(\xi\cdot a)\big)(1_{M(\mathcal{O}_X)}\otimes x) \\
                &=j_{\mathcal{O}_X}^\zeta(k_X(\xi))\,\zeta_\iota(k_A(a))(1_{M(\mathcal{O}_X)}\otimes x)
                =j_{\mathcal{O}_X}^\zeta(k_X(\xi))\,v,
            \end{align*}
        which verifies the equality \eqref{Sec.5.Pf.Main.Thm-2} since $k_A\rtimes_\delta{\rm id}_{\widehat{S}}$ is nondegenerate. It is now obvious that for the product $C\,\overline{k_{X\rtimes_\sigma\widehat{S}}}(j_X^\sigma(\xi))\in\mathcal{O}_{X\rtimes_\sigma\widehat{S}}$ we have
            \[\Psi\big(C\,\overline{k_{X\rtimes_\sigma\widehat{S}}}(j_X^\sigma(\xi))\big)
            =(1_{M(\mathcal{O}_X)}\otimes x)\zeta_\iota\big(k_X(\xi_1)\cdots k_X(\xi_n)k_X(\xi)\big).\]
        Consequently, the statement \eqref{Sec.5.Pf.Main.Thm} is shown to be true for all positive integer $n$, and hence $\Psi$ is surjective.

        Let $\beta:\mathbb{T}\rightarrow Aut(\mathcal{O}_X)$ be the gauge action. Then the strict extensions $\overline{\beta_z\otimes{\rm id}_{\mathscr{K}}}$ are automorphisms on $M(\mathcal{O}_X\otimes\mathscr{K})$. We have
        \begin{equation*}\label{EqualityforGIUT1}
        \begin{aligned}
            \overline{\beta_z\otimes{\rm id}_{\mathscr{K}}}\big(\zeta_\iota(k_X(\xi))\big)&=z\zeta_\iota(k_X(\xi)), \\
            \overline{\beta_z\otimes{\rm id}_{\mathscr{K}}}\big(\zeta_\iota(k_A(a))\big)&=\zeta_\iota(k_A(a))
        \end{aligned}
        \end{equation*}
        in the same way as the last part of the proof of Theorem \ref{induced coactions on O_X}. Therefore,
            \begin{multline*}
            \overline{\beta_z\otimes{\rm id}_{\mathscr{K}}}\big((k_X\rtimes_\sigma{\rm id}_{\widehat{S}})\big(\sigma_\iota(\xi)\cdot(1_{M(A)}\otimes x)\big)\big) \\
            \begin{aligned}
            &=\overline{\beta_z\otimes{\rm id}_{\mathscr{K}}}\big(\zeta_\iota(k_X(\xi))(1_{M(\mathcal{O}_X)}\otimes x)\big) \\
            &=z\,\zeta_\iota(k_X(\xi))(1_{M(\mathcal{O}_X)}\otimes x) \\
            &=z\,(k_X\rtimes_\sigma{\rm id}_{\widehat{S}})\big(\sigma_\iota(\xi)\cdot(1_{M(A)}\otimes x)\big)
            \end{aligned}
            \end{multline*}
        and similarly
            \[\overline{\beta_z\otimes{\rm id}_{\mathscr{K}}}\big((k_A\rtimes_\delta{\rm id}_{\widehat{S}})\big(\delta_\iota(a)(1_{M(A)}\otimes x)\big)\big)=(k_A\rtimes_\delta{\rm id}_{\widehat{S}})\big(\delta_\iota(a)(1_{M(A)}\otimes x)\big).\]
        This proves that the restriction $\overline{\beta_z\otimes{\rm id}_{\mathscr{K}}}|_{\mathcal{O}_X\rtimes_\zeta\widehat{S}}$ defines an automorphism on $\mathcal{O}_X\rtimes_\zeta\widehat{S}=\Psi(\mathcal{O}_{X\rtimes_\sigma\widehat{S}})$, and the injective covariant representation $(k_X\rtimes_\sigma{\rm id}_{\widehat{S}},k_A\rtimes_\delta{\rm id}_{\widehat{S}})$ admits a gauge action. We thus conclude by \cite[Theorem~6.4]{Kat} that $\Psi$ is injective as well, which completes the proof.
    \end{proof}

    Applying Theorem \ref{Main.Theorem.} to group actions we can extend Theorem 2.10 of \cite{HaoNg} as Corollary~\ref{Cor1.to.Main.Thm} states below, the proof of which will be given in Appendix B. Let $(\gamma,\alpha)$ be an action of a locally compact group $G$ on $(X,A)$. By Theorem \ref{actions=coactions}, $(\gamma,\alpha)$ defines a coaction $(\sigma^\gamma,\delta^\alpha)$ of $C_0(G)$ on $(X,A)$, which induces a coaction $\zeta$ of $C_0(G)$ on $\mathcal{O}_X$ by Theorem \ref{induced coactions on O_X} and Remarks~\ref{Sec.3.Rmk.to.Thm}.(1). Let $\beta^\zeta$ be the action of $G$ on $\mathcal{O}_X$ corresponding to the coaction $\zeta$. In a similar way to \cite[Corollary~2.9]{HaoNg}, we define a representation
        \[(k_X\rtimes_\gamma G,k_A\rtimes_\alpha G):(X\rtimes_{\gamma,r}G,A\rtimes_{\alpha,r}G)\rightarrow\mathcal{O}_X\rtimes_{\beta^\zeta,r}G\]
    by
        \[(k_X\rtimes_\gamma G)(f)(r)=k_X(f(r)),\quad(k_A\rtimes_\alpha G)(g)(r)=k_A(g(r))\]
    for $f\in C_c(G,X)$, $g\in C_c(G,A)$, and $r\in G$.

    \begin{cor}\label{Cor1.to.Main.Thm}
        Let $(\gamma,\alpha)$ be an action of a locally compact group $G$ on $(X,A)$. If the representation $(k_X\rtimes_\gamma G,k_A\rtimes_\alpha G)$ is covariant, then its integrated form $(k_X\rtimes_\gamma G)\times(k_A\rtimes_\alpha G):\mathcal{O}_{X\rtimes_{\gamma,r}G}\rightarrow\mathcal{O}_X\rtimes_{\beta^\zeta,r}G$ is a surjective isomorphism.
    \end{cor}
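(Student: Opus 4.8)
The plan is to deduce Corollary~\ref{Cor1.to.Main.Thm} as a direct application of Theorem~\ref{Main.Theorem.} to the coaction of the reduced Hopf $C^*$-algebra $S_{\widehat{W}_G}$ attached to the action $(\gamma,\alpha)$. First I would pass from the action to coactions: by Theorem~\ref{actions=coactions} the action $(\gamma,\alpha)$ determines a coaction $(\sigma^\gamma,\delta^\alpha)$ of $C_0(G)$ on $(X,A)$, and composing with the Hopf $C^*$-algebra isomorphism $\check{\mu}_G\colon C_0(G)\to S_{\widehat{W}_G}$ of the Preliminaries yields a coaction $(\sigma^\gamma_G,\delta^\alpha_G)$ of $S_{\widehat{W}_G}$ on $(X,A)$, where $S_{\widehat{W}_G}$ is defined by the well-behaved multiplicative unitary $\widehat{W}_G$ and $\widehat{S}_{\widehat{W}_G}=C^*_r(G)$. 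Since $\check{\mu}_G$ is a Hopf $C^*$-algebra isomorphism, the automatic weak $\delta^\alpha$-invariancy of $J_X$ for $C_0(G)$-coactions (Remarks~\ref{Sec.3.Rmk.to.Thm}.(1)) transports, via $\overline{{\rm id}\otimes\check{\mu}_G}$, to weak $\delta^\alpha_G$-invariancy, so the hypothesis on $J_X$ in Theorem~\ref{Main.Theorem.} is met for free.

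The heart of the argument is a dictionary between the group-theoretic and Hopf-algebraic objects. I would invoke Corollary~\ref{Appendix.B.Cor.1} to identify the reduced crossed product correspondence $(X\rtimes_{\sigma^\gamma_G}\widehat{S}_{\widehat{W}_G},A\rtimes_{\delta^\alpha_G}\widehat{S}_{\widehat{W}_G})$ with the crossed product correspondence $(X\rtimes_{\gamma,r}G,A\rtimes_{\alpha,r}G)$ of \cite{EKQR}, hence $\mathcal{O}_{X\rtimes_{\sigma^\gamma_G}\widehat{S}_{\widehat{W}_G}}\cong\mathcal{O}_{X\rtimes_{\gamma,r}G}$. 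On the target side I would show that the coaction of $S_{\widehat{W}_G}$ on $\mathcal{O}_X$ induced by $(\sigma^\gamma_G,\delta^\alpha_G)$ via Theorem~\ref{induced coactions on O_X} equals $\overline{{\rm id}_{\mathcal{O}_X}\otimes\check{\mu}_G}\circ\zeta$, where $\zeta$ is the induced $C_0(G)$-coaction. This is a naturality statement following from the compatibility of strict extensions $\overline{k_X\otimes{\rm id}_{S_{\widehat{W}_G}}}\circ\overline{{\rm id}_X\otimes\check{\mu}_G}=\overline{{\rm id}_{\mathcal{O}_X}\otimes\check{\mu}_G}\circ\overline{k_X\otimes{\rm id}_{C_0(G)}}$ (in the spirit of Lemma~\ref{the diagram}, with $\check{\mu}_G$ in place of $\iota_S$) together with the uniqueness of integrated forms. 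Consequently $\mathcal{O}_X\rtimes_\zeta\widehat{S}_{\widehat{W}_G}=\mathcal{O}_X\rtimes_{\beta^\zeta,r}G$, since $\beta^\zeta$ is by definition the action corresponding to the $C_0(G)$-coaction $\zeta$ and, as recalled in the Preliminaries, the reduced crossed product by a group action coincides with that by the associated $S_{\widehat{W}_G}$-coaction.

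Next I would check that, under these two identifications, the representation $(k_X\rtimes_{\sigma^\gamma_G}{\rm id},k_A\rtimes_{\delta^\alpha_G}{\rm id})$ of Proposition~\ref{embedding XxShat to OxShat is a Toep.rep.} coincides with $(k_X\rtimes_\gamma G,k_A\rtimes_\alpha G)$. This is a computation on the dense subspaces $C_c(G,X)$ and $C_c(G,A)$: one evaluates both maps on a function and compares, using the explicit formulas \eqref{Sec.5.Rep.1} for the former and the defining relations $(k_X\rtimes_\gamma G)(f)(r)=k_X(f(r))$, $(k_A\rtimes_\alpha G)(g)(r)=k_A(g(r))$ for the latter, together with the explicit form of the isomorphism supplied by Corollary~\ref{Appendix.B.Cor.1}. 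Granting this, the covariance hypothesis on $(k_X\rtimes_\gamma G,k_A\rtimes_\alpha G)$ is exactly the covariance of $(k_X\rtimes_{\sigma^\gamma_G}{\rm id},k_A\rtimes_{\delta^\alpha_G}{\rm id})$, so Theorem~\ref{Main.Theorem.} applies; its conclusion that the integrated form is a surjective isomorphism then translates, through the identifications above, into the assertion $\mathcal{O}_{X\rtimes_{\gamma,r}G}\cong\mathcal{O}_X\rtimes_{\beta^\zeta,r}G$.

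The main obstacle is not a new estimate but the bookkeeping of these identifications, above all the naturality check that inducing a coaction on $\mathcal{O}_X$ commutes with composing by $\check{\mu}_G$, and the matching of the two representations on $C_c(G,X)$; each of these reduces to the compatibility of strict extensions and the uniqueness of integrated forms, once Corollary~\ref{Appendix.B.Cor.1} has furnished the isomorphism of crossed product correspondences. For this reason I would defer the proof to Appendix~\ref{App.B}, where Corollary~\ref{Appendix.B.Cor.1} is established.
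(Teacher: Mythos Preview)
Your proposal is correct and follows essentially the same route as the paper: pass to the coaction $(\sigma^\gamma_G,\delta^\alpha_G)$ of $S_{\widehat{W}_G}$, use Corollary~\ref{Appendix.B.Cor.1} to identify $(X\rtimes_{\gamma,r}G,A\rtimes_{\alpha,r}G)$ with $(X\rtimes_{\sigma^\gamma_G}\widehat{S}_{\widehat{W}_G},A\rtimes_{\delta^\alpha_G}\widehat{S}_{\widehat{W}_G})$, check that under this identification and the equality $\mathcal{O}_X\rtimes_{\beta^\zeta,r}G=\mathcal{O}_X\rtimes_{\zeta_G}\widehat{S}_{\widehat{W}_G}$ the representation $(k_X\rtimes_\gamma G,k_A\rtimes_\alpha G)$ becomes $(k_X\rtimes_{\sigma^\gamma_G}{\rm id},k_A\rtimes_{\delta^\alpha_G}{\rm id})$, and then invoke Theorem~\ref{Main.Theorem.}. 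The paper packages the matching of representations into a single commutative diagram rather than a computation on $C_c(G,X)$, but the content is the same.
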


    For the amenability in the next theorem, we refer to \cite{BS}. See also \cite{Ng}.

    \begin{thm}\label{amen.V}
        Let $(\sigma,\delta)$ be a coaction on $(X,A)$ of a reduced Hopf $C^*$-algebra $S$ defined by an amenable regular multiplicative unitary such that $J_X$ is weakly $\delta$-invariant. If $A$ is nuclear (or exact, respectively), then the same is true for $\mathcal{O}_X\rtimes_{\zeta}\widehat{S}$.
    \end{thm}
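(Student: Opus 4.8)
The plan is to deduce the conclusion by chaining together two essentially independent facts: that nuclearity and exactness pass from the coefficient algebra $A$ to the Cuntz-Pimsner algebra $\mathcal{O}_X$, and that the reduced crossed product by an amenable coaction preserves both properties. Observe first that the hypotheses make the statement meaningful: since $J_X$ is weakly $\delta$-invariant, Theorem~\ref{induced coactions on O_X} furnishes the induced coaction $\zeta$ of $S$ on $\mathcal{O}_X$, so that $\mathcal{O}_X\rtimes_\zeta\widehat{S}=\overline{\zeta_\iota(\mathcal{O}_X)(1_{M(\mathcal{O}_X)}\otimes\widehat{S})}$ is genuinely the reduced crossed product of $\mathcal{O}_X$ by $\zeta$ in the sense recalled in Section~\ref{Sec.2}.

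First I would invoke Katsura's criteria for Cuntz-Pimsner algebras \cite{Kat,Kat2}, which guarantee in particular that $\mathcal{O}_X$ is nuclear whenever $A$ is nuclear and exact whenever $A$ is exact; only these two implications are needed here. Thus the hypothesis on $A$ at once upgrades to the corresponding property for the $C^*$-algebra $\mathcal{O}_X$, and the problem is reduced to transporting nuclearity (resp.\ exactness) across the reduced crossed product $\mathcal{O}_X\rtimes_\zeta\widehat{S}$.

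The decisive step is exactly where the amenability and regularity of the multiplicative unitary $V$ enter. By the amenability theory of \cite{BS} (see also \cite{Ng}), an amenable regular multiplicative unitary makes the reduced crossed product coincide with its universal counterpart and forces the reduced-crossed-product construction to respect nuclearity and exactness: the invariant mean supplied by amenability renders $\widehat{S}$ nuclear and the canonical map out of the full crossed product an isomorphism, after which the passage $B\rightsquigarrow B\rtimes_\delta\widehat{S}$ preserves both properties. Applying this with $B=\mathcal{O}_X$ yields that $\mathcal{O}_X\rtimes_\zeta\widehat{S}$ is nuclear (resp.\ exact), as required.

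I expect this transfer along the crossed product to be the main obstacle, since without amenability a reduced crossed product need not inherit nuclearity from its coefficient algebra. The crux is to exploit the invariant mean in order to compare the reduced object with its universal version, from which nuclearity follows directly and exactness follows by a subalgebra/quotient argument; all of this technical bookkeeping is packaged in the amenability results of \cite{BS} and \cite{Ng} that we cite. Consequently, once $\mathcal{O}_X$ has been identified as nuclear (resp.\ exact) via \cite{Kat,Kat2}, the conclusion is immediate.
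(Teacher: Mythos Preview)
Your argument is correct, but it follows a different path from the paper's own proof. You pass nuclearity (resp.\ exactness) along the chain $A\Rightarrow\mathcal{O}_X\Rightarrow\mathcal{O}_X\rtimes_\zeta\widehat{S}$, invoking Katsura's permanence results for the first arrow and Ng's amenability results for the second. The paper instead reverses the order of these two operations: it first transfers the property from $A$ to $A\rtimes_\delta\widehat{S}$ via \cite{Ng}, then from $A\rtimes_\delta\widehat{S}$ to the Toeplitz algebra $\mathcal{T}_{X\rtimes_\sigma\widehat{S}}$ via \cite{Kat}, and finally shows---by rerunning the surjectivity portion of the proof of Theorem~\ref{Main.Theorem.} with the Toeplitz representation in place of the Cuntz--Pimsner one---that $\mathcal{O}_X\rtimes_\zeta\widehat{S}$ is a quotient of $\mathcal{T}_{X\rtimes_\sigma\widehat{S}}$, so inherits the property. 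Your route is shorter and uses none of the crossed-product-correspondence machinery developed in Sections~\ref{Sec.4}--\ref{Sec.5}; the paper's route, on the other hand, yields the extra information that the Toeplitz representation $(k_X\rtimes_\sigma{\rm id}_{\widehat{S}},k_A\rtimes_\delta{\rm id}_{\widehat{S}})$ induces a surjection $\mathcal{T}_{X\rtimes_\sigma\widehat{S}}\twoheadrightarrow\mathcal{O}_X\rtimes_\zeta\widehat{S}$ without any covariance hypothesis, which is in keeping with the paper's theme even though it is not needed for the bare statement of Theorem~\ref{amen.V}.
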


    \begin{proof}
        If $A$ is nuclear (or exact, respectively), then so is $A\rtimes_\delta\widehat{S}$ by \cite[Theorem~3,4]{Ng} (or by \cite[Theorem 3.13]{Ng}, respectively). Hence, the Toeplitz algebra $\mathcal{T}_{X\rtimes_{\sigma}\widehat{S}}$ is nuclear by \cite[Corollary 7.2]{Kat} (or exact by \cite[Theorem 7.1]{Kat}, respectively). Since nuclearity or exactness passes to quotients, it suffices to show that the representation $(k_X\rtimes_\sigma{\rm id}_{\widehat{S}},k_A\rtimes_\delta{\rm id}_{\widehat{S}})$ gives rise to a surjection from $\mathcal{T}_{X\rtimes_\sigma\widehat{S}}$ onto $\mathcal{O}_X\rtimes_\zeta\widehat{S}$. The proof of this then goes parallel to the one given in the proof of Theorem \ref{Main.Theorem.} using the embedding $(i_{X\rtimes_\sigma\widehat{S}},i_{A\rtimes_\delta\widehat{S}})$ of $(X\rtimes_\sigma\widehat{S},A\rtimes_\delta\widehat{S})$ into $\mathcal{T}_{X\rtimes_\sigma\widehat{S}}$ instead of $(k_{X\rtimes_\sigma\widehat{S}},k_{A\rtimes_\delta\widehat{S}})$ used in there.
    \end{proof}

    Applying the previous results we consider coactions on crossed products by $\mathbb{Z}$ which form an important example of Cuntz-Pimsner algebras.

    Let $\varphi$ be an automorphism on a $C^*$-algebra $A$. Equipped with the left action $\varphi_A(a)b=\varphi(a)b$ for $a,b\in A$, the Hilbert $A$-module $A$ then becomes a $C^*$-cor\-re\-spond\-ence (\cite[Examples~(3)]{Pim}), which we call a \emph{$\varphi$-cor\-re\-spond\-ence} and denote by $A(\varphi)$. For a $\varphi$-cor\-re\-spond\-ence $A(\varphi)$, it is clear that the multiplier correspondence $M(A(\varphi))$ coincides with the $\overline{\varphi}$-cor\-re\-spond\-ence $M(A)(\overline{\varphi})$ and the strict topology on $M(A(\varphi))$ is the usual one on the multiplier algebra $M(A)$.

    We want to consider a coaction of a Hopf $C^*$-algebra on a $\varphi$-cor\-re\-spond\-ence $(A(\varphi),A)$ and its induced coaction on $\mathcal{O}_{A(\varphi)}$. Before that, let us observe the following.

    \begin{lem}\label{Corr.Hom.bet'nIdentityCorr's}
        Let $\varphi$ and $\varphi'$ be automorphisms on $C^*$-algebras $A$ and $B$, respectively, and $\pi:A\rightarrow M(B)$ be a nondegenerate homomorphism. Let $v\in M(B)$ be a unitary such that
        \[v\,\pi(\varphi(a))=\overline{\varphi'}(\pi(a))\,v\quad(a\in A).\]
        Define
            \[\psi(a):=v\pi(a)\quad(a\in A).\]
        Then $(\psi,\pi):(A(\varphi),A)\rightarrow(M(B(\varphi')),M(B))$ is a nondegenerate correspondence homomorphism. Moreover, every nondegenerate correspondence homomorphism from $(A(\varphi),A)$ into $(M(B(\varphi')),M(B))$ is of this form.
    \end{lem}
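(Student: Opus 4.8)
The plan is to treat the two assertions separately: verify the stated construction by a direct computation, and in the converse recover the unitary $v$ by extending $(\psi,\pi)$ strictly to the multipliers and evaluating at $1_{M(A)}$.

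For the first assertion, set $\psi(a)=v\pi(a)$ and check the two axioms of a correspondence homomorphism together with nondegeneracy. Since $A(\varphi)$ has inner product $\langle a,b\rangle_A=a^*b$ and left action $\varphi_A(a)b=\varphi(a)b$, while $M(B(\varphi'))=M(B)(\overline{\varphi'})$ carries the left action $\overline{\varphi'}$, condition (ii) reads $\psi(a)^*\psi(b)=\pi(a^*b)$, which is immediate from $v^*v=1$; condition (i) reads $\psi(\varphi(a)b)=\overline{\varphi'}(\pi(a))\psi(b)$, which follows by writing $\psi(\varphi(a)b)=v\pi(\varphi(a))\pi(b)$ and invoking the intertwining hypothesis $v\pi(\varphi(a))=\overline{\varphi'}(\pi(a))v$. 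The right-module identity $\psi(ba)=\psi(b)\pi(a)$ is automatic. Finally, nondegeneracy of $\pi$ gives $\overline{\pi(A)B}=B$, and since $v$ is unitary $\overline{\psi(A)\cdot B}=\overline{v\pi(A)B}=\overline{vB}=B$, so $(\psi,\pi)$ is a nondegenerate correspondence homomorphism.

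For the converse, I would start from a nondegenerate correspondence homomorphism $(\psi,\pi)$ and use \cite[Theorem~1.30]{EKQR} to extend it to a strictly continuous correspondence homomorphism $(\overline\psi,\overline\pi):(M(A)(\overline\varphi),M(A))\rightarrow(M(B)(\overline{\varphi'}),M(B))$, using the identification $M(A(\varphi))=M(A)(\overline\varphi)$ noted just before the lemma. Define $v:=\overline\psi(1_{M(A)})\in M(B)$. Because $\overline\psi$ is a right-module map over $\overline\pi$ and $1_{M(A)}\cdot a=a$, we get $\psi(a)=\overline\psi(1_{M(A)})\,\pi(a)=v\pi(a)$ for $a\in A$, which is the desired formula. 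The inner-product axiom for the extension yields $v^*v=\langle v,v\rangle_{M(B)}=\overline\pi(\langle 1_{M(A)},1_{M(A)}\rangle_{M(A)})=\overline\pi(1_{M(A)})=1_{M(B)}$, so $v$ is an isometry; and nondegeneracy gives $B=\overline{\psi(A)\cdot B}=\overline{v\,\pi(A)B}\subseteq\overline{vB}$, whence $\overline{vB}=B$, so the projection $vv^*$ fixes a dense subset of $B$ and therefore acts as the identity on the essential ideal $B\subseteq M(B)$, forcing $vv^*=1_{M(B)}$ and making $v$ unitary.

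It remains to extract the intertwining relation, which I obtain by applying axiom (i) for the extended homomorphism to the left multiplier $a\in A$ and the module element $1_{M(A)}$: since the left action of $a$ on $M(A)(\overline\varphi)$ sends $1_{M(A)}$ to $\overline\varphi(a)\cdot 1_{M(A)}=\varphi(a)$, this gives $\psi(\varphi(a))=\overline\psi(\overline\varphi(a)\cdot 1_{M(A)})=\overline{\varphi'}(\pi(a))\,\overline\psi(1_{M(A)})=\overline{\varphi'}(\pi(a))\,v$. Comparing with $\psi(\varphi(a))=v\pi(\varphi(a))$ from the formula already established yields $v\pi(\varphi(a))=\overline{\varphi'}(\pi(a))v$, as required. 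I expect the main obstacle to lie in the converse, and within it the verification that $v$ is a two-sided unitary: while $v^*v=1$ is forced by the inner-product axiom, the equality $vv^*=1$ genuinely requires the nondegeneracy hypothesis through the density $\overline{vB}=B$, and one must also be careful that the strict extension and the evaluation at $1_{M(A)}$ are legitimate, which is precisely what \cite[Theorem~1.30]{EKQR} and the identification $M(A(\varphi))=M(A)(\overline\varphi)$ supply.
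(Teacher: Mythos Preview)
Your proof is correct and follows essentially the same route as the paper: a direct verification for the first assertion, and for the converse, strict extension followed by $v:=\overline{\psi}(1_{M(A)})$, with $v^*v=1$ from the inner-product axiom, $vv^*=1$ from nondegeneracy, and the intertwining relation from axiom~(i) applied at $1_{M(A)}$. The only cosmetic difference is that the paper verifies $vv^*=1$ by computing $vv^*(\psi(a)b)=\psi(a)b$ via the module identities, whereas you first deduce $\psi(a)=v\pi(a)$ and then argue $\overline{vB}=B$; these are the same observation.
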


    \begin{proof}
        For $a,a'\in A$,
        %\begin{align*}
         \[   \psi(\varphi(a)a')
            = v\pi(\varphi(a)a')=v\pi(\varphi(a))\,\pi(a')
            =\overline{\varphi'}(\pi(a))\,v\pi(a')=\overline{\varphi'}(\pi(a))\,\psi(a') \]
        %\end{align*}
        and $\langle\psi(a),\psi(a')\rangle_{M(B)}=\pi(a)^*v^*v\pi(a')=\pi(\langle a,a'\rangle_A)$. Hence $(\psi,\pi)$ is a correspondence homomorphism, and obviously nondegenerate.

        For the converse, let $(\psi,\pi):(A(\varphi),A)\rightarrow(M(B(\varphi')),M(B))$ be a nondegenerate correspondence homomorphism, and consider its strict extension $(\overline{\psi},\overline{\pi})$. Let $v=\overline{\psi}(1_{M(A)})$. Since $(\overline{\psi},\overline{\pi})$ is a correspondence homomorphism, we have
            \[v^*v=\langle v,v\rangle_{M(B)}=\overline{\pi}(\langle1_{M(A)},1_{M(A)}\rangle_{M(A)})=1_{M(B)}.\]
        We also have
            %\begin{align*}
            \[    vv^*(\psi(a) b)
                =\overline{\psi}(1_{M(A)})\langle\overline{\psi}(1_{M(A)}),\psi(a) b\rangle_{M(B)}
                =\overline{\psi}(1_{M(A)})\pi(a)b=\overline{\psi}(a) b \]
            %\end{align*}
        for $a\in A$ and $b\in B$ so that $vv^*=1_{M(B)}$. Hence $v$ is a unitary in $M(B)$. Finally,
            \[v\pi(\varphi(a))=\psi(\varphi(a))=\psi\big(\varphi(a)1_{M(A)}\big)=\overline{\varphi'}(\pi(a))v.\]
        This completes the proof.
    \end{proof}

    Let $\delta$ be a coaction of a Hopf $C^*$-algebra $(S,\Delta)$ on a $C^*$-algebra $A$ and $\varphi\in Aut(A)$. Let $v$ be a cocycle for the coaction $\delta$, that is, a unitary $v\in M(A\otimes S)$ satisfying
        \[v_{12}\,\overline{\delta\otimes{\rm id}_S}(v)=\overline{{\rm id}_A\otimes\Delta}(v)\]
    (\cite[Definition 0.4]{BS}), and suppose that
    \begin{equation}\label{EquivarianceforA,Aw.varphi}
            v\,\delta(\varphi(a))=\overline{\varphi\otimes{\rm id}_S}(\delta(a))\,v\quad(a\in A).
        \end{equation}
    Define $\sigma:A(\varphi)\rightarrow M(A\otimes S(\varphi\otimes{\rm id}_S))=M(A(\varphi)\otimes S)$ by
        \[\sigma(a):=v\delta(a)\quad(a\in A).\]
    Then $(\sigma,\delta)$ is a coaction of $S$ on the $\varphi$-cor\-re\-spond\-ence $(A(\varphi),A)$. Indeed, it is a nondegenerate correspondence homomorphism by Lemma \ref{Corr.Hom.bet'nIdentityCorr's}. Also, the computation
        \begin{align*}
            \overline{\sigma\otimes{\rm id}_S}(\sigma(a))
            &=v_{12}\,\overline{\delta\otimes{\rm id}_S}(v\delta(a)) \\
            &=v_{12}\,\overline{\delta\otimes{\rm id}_S}(v)\,\overline{\delta\otimes{\rm id}_S}(\delta(a)) \\
            &=\overline{{\rm id}_A\otimes\Delta}(v)\,\overline{{\rm id}_A\otimes\Delta}(\delta(a)) = \overline{{\rm id}_A\otimes\Delta}(\sigma(a))
        \end{align*}
        verifies the coaction identity of $\sigma$. The coaction nondegeneracy of $\delta$ gives
        \begin{align*}
            \overline{(1_{M(A)}\otimes S)\sigma(A)}
            &=\overline{(1_{M(A)}\otimes S)\,v\delta(A)}=\overline{(1_{M(A)}\otimes S)\,v\delta(\varphi(A))} \\
            &=\overline{(1_{M(A)}\otimes S)\,\big(\overline{\varphi\otimes{\rm id}_S}\,\delta(A)\big)\,v} \\
            &=\overline{\overline{\varphi\otimes{\rm id}_S}\big((1_{M(A)}\otimes S)\,\delta(A)\big)}\,v =(A\otimes S)v = A\otimes S
        \end{align*}
    so that $\sigma$ satisfies coaction nondegeneracy. Hence $(\sigma,\delta)$ is a coaction.

    The Cuntz-Pimsner algebra $\mathcal{O}_{A(\varphi)}$ is isomorphic to the crossed product $A\rtimes_\varphi\mathbb{Z}$ and an isomorphism $\mathcal{O}_{A(\varphi)}\cong A\rtimes_\varphi\mathbb{Z}$ can be given as follows. Let $(\pi,u)$ be the canonical covariant representation of the $C^*$-dynamical system $(A,\mathbb{Z},\varphi)$ on $M(A\rtimes_\varphi\mathbb{Z})$. Define $\psi:A(\varphi)\rightarrow A\rtimes_\varphi\mathbb{Z}$ by
        \[\psi(a)=u^{*}\pi(a)\quad(a\in A(\varphi)).\]
    It can be easily checked that $(\psi,\pi)$ is a covariant representation of $(A(\varphi),A)$ on $A\rtimes_\varphi\mathbb{Z}$. Furthermore, the integrated form $\psi\times\pi:\mathcal{O}_{A(\varphi)}\rightarrow A\rtimes_\varphi\mathbb{Z}$ gives a surjective isomorphism. We will identify in this way the universal covariant representations $(k_X,k_A)=(\psi,\pi)$ as well as the $C^*$-algebras $\mathcal{O}_{A(\varphi)}=A\rtimes_\varphi\mathbb{Z}$.

    Since $J_{A(\varphi)}=A$ is evidently weakly $\delta$-invariant, it follows by Theorem \ref{induced coactions on O_X} that $(\sigma,\delta)$ induces a coaction $\zeta$ of $S$ on $\mathcal{O}_{A(\varphi)}=A\rtimes_\varphi\mathbb{Z}$ which can be described explicitly on the canonical generators of $A\rtimes_\varphi\mathbb{Z}$ as follows. Theorem \ref{induced coactions on O_X} says that
        \[\zeta(\pi(a))=\overline{\pi\otimes{\rm id}_S}(\delta(a)),\]
        \[
                \zeta(u^{*}\pi(a))
                =\zeta(\psi(a))=\overline{\psi\otimes{\rm id}_S}(\sigma(a))
                =(u^{*}\otimes1_{M(S)})\,\overline{\pi\otimes{\rm id}_S}(v\delta(a))
        \]
    for $a\in A$. Note that $\overline{\zeta}(u^*)=(u^{*}\otimes1_{M(S)})\,\overline{\pi\otimes{\rm id}_S}(v)$. Hence,
        \begin{equation*}\label{Sec.3.Example1}
        \zeta(\pi(a)u^n)=\overline{\pi\otimes{\rm id}_S}(\delta(a))\big((u^{*}\otimes1_{M(S)})\,\overline{\pi\otimes{\rm id}_S}(v)\big)^{-n}
        \end{equation*}
    for $a\in A$ and $n\in\mathbb{Z}$.

    Assume now that the Hopf $C^*$-algebra $S$ is reduced. Then we can form the reduced crossed product correspondence $(A(\varphi)\rtimes_\sigma\widehat{S},A\rtimes_\delta\widehat{S})$ by Theorem~\ref{crossed product correspondences}. Let $v_\iota=\overline{{\rm id}_A\otimes\iota_S}(v)$. Since %$\varphi_{A\rtimes_\delta\widehat{S}}=\overline{\varphi\otimes{\rm id}_{\mathcal{K}(\mathcal{H})}}|_{A\rtimes_\delta\widehat{S}}$ and
    the multiplication by $v_\iota$ from the left gives a Hilbert module isomorphism from $A\rtimes_\delta\widehat{S}$ onto $A(\varphi)\rtimes_\sigma\widehat{S}$, we may --- and do --- regard the $C^*$-cor\-re\-spond\-ence $A(\varphi)\rtimes_\sigma\widehat{S}$ as the Hilbert module $A\rtimes_\delta\widehat{S}$ with the left action
        \begin{equation}\label{Sec.4.Example1}
            \varphi_{A\rtimes_\delta\widehat{S}}(c)\,d=v_\iota^*\overline{\varphi\otimes{\rm id}_{\mathcal{K}(\mathcal{H})}}(c)v_\iota\,d
        \end{equation}
    for an element $c$ in the $C^*$-algebra $A\rtimes_\delta\widehat{S}$ and a vector $d$ in the Hilbert module $A\rtimes_\delta\widehat{S}$. Note that $\varphi_{A\rtimes_\delta\widehat{S}}$ is injective. Since $\varphi_A$ is injective, $\mathcal{O}_{A(\varphi)}\rtimes_\zeta\widehat{S}$ is the Cuntz-Pimsner algebra $\mathcal{O}_{A(\varphi)\rtimes_\sigma\widehat{S}}$ by Corollary \ref{Sec.5.Cor.varphi.inj.} and Theorem \ref{Main.Theorem.}.

    We can summerize what we have seen so far as follows.

    \begin{prop}\label{Sec.6.Prop.1}
        Let $\varphi$ be an automorphism on a $C^*$-algebra $A$ and $\delta$ be a coaction of a Hopf $C^*$-algebra $S$ on $A$. Let $v$ be a cocyle for $\delta$ satisfying \eqref{EquivarianceforA,Aw.varphi}. Define $\sigma:A(\varphi)\rightarrow M(A(\varphi)\otimes S)$ by $\sigma(a)=v\delta(a)$. Then the following hold.

        {\rm(i)} $(\sigma,\delta)$ is a coaction of $S$ on the $\varphi$-cor\-re\-spond\-ence $(A(\varphi),A)$.

        {\rm(ii)} Let $(\pi,u)$ be the canonical covariant representation of $(A,\mathbb{Z},\varphi)$ on $M(A\rtimes_\varphi\mathbb{Z})$. Then, the homomorphism \[\overline{\pi\otimes{\rm id}_S}\circ\delta:A\rightarrow M((A\rtimes_\varphi\mathbb{Z})\otimes S)\] and the unitary $\overline{\pi\otimes{\rm id}_S}(v^*)(u\otimes1_{M(S)})\in M((A\rtimes_\varphi\mathbb{Z})\otimes S)$ form a covariant representation of $(A,\mathbb{Z},\varphi)$ on $M((A\rtimes_\varphi\mathbb{Z})\otimes S)$ such that the integrated form gives a coaction $\zeta$ of $S$ on $A\rtimes_\varphi\mathbb{Z}$ and coincides with the coaction induced by $(\sigma,\delta)$.

        {\rm(iii)} If $S$ is reduced then $A(\varphi)\rtimes_\sigma\widehat{S}=A\rtimes_\delta\widehat{S}$ as Hilbert $(A\rtimes_\delta\widehat{S})$-modules and the left action is given by \eqref{Sec.4.Example1}. The reduced crossed product $(A\rtimes_\varphi\mathbb{Z})\rtimes_\zeta\widehat{S}=\mathcal{O}_{A(\varphi)}\rtimes_\zeta\widehat{S}$ is the Cuntz-Pimsner algebra $\mathcal{O}_{A(\varphi)\rtimes_\sigma\widehat{S}}$.
    \end{prop}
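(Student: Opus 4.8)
The plan is to assemble the three assertions from the computations already carried out in the paragraphs preceding the statement, the only genuinely new ingredient being the covariance identity needed for~(ii). For part~(i), I would simply record that $(\sigma,\delta)$ is a nondegenerate correspondence homomorphism by Lemma~\ref{Corr.Hom.bet'nIdentityCorr's} applied with $B=A\otimes S$, $\varphi'=\varphi\otimes\id_S$, $\pi=\delta$ and the unitary $v$ (the hypothesis of that lemma being exactly the equivariance \eqref{EquivarianceforA,Aw.varphi}); that the coaction identity of $\sigma$ follows from the cocycle relation $v_{12}\,\overline{\delta\otimes\id_S}(v)=\overline{\id_A\otimes\Delta}(v)$; and that the coaction nondegeneracy of $\sigma$ is inherited from that of $\delta$ after cancelling the unitary $v$. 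All three computations appear above, so (i) is immediate.

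For part~(ii), I would set $\pi':=\overline{\pi\otimes\id_S}\circ\delta$, which is nondegenerate as a composition of nondegenerate maps, and $u':=\overline{\pi\otimes\id_S}(v^*)(u\otimes1_{M(S)})$, which is a unitary in $M((A\rtimes_\varphi\mathbb{Z})\otimes S)$ since $v$ and $u$ are unitaries and $\overline{\pi\otimes\id_S}$ carries unitaries to unitaries. The one computation to run is the covariance relation $u'\pi'(a)(u')^*=\pi'(\varphi(a))$: conjugating $\overline{\pi\otimes\id_S}(\delta(a))$ by $u\otimes1_{M(S)}$ turns $\pi$ into $\pi\circ\varphi$ and hence replaces $\delta(a)$ by $\overline{\varphi\otimes\id_S}(\delta(a))$, after which \eqref{EquivarianceforA,Aw.varphi} rewritten as $v^*\,\overline{\varphi\otimes\id_S}(\delta(a))\,v=\delta(\varphi(a))$ collapses the expression to $\overline{\pi\otimes\id_S}(\delta(\varphi(a)))=\pi'(\varphi(a))$. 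Thus $(\pi',u')$ is a covariant representation of $(A,\mathbb{Z},\varphi)$, and its integrated form is a homomorphism on $A\rtimes_\varphi\mathbb{Z}$. To identify this integrated form with $\zeta$ I would note that it agrees with $\zeta$ on the generators: $\zeta(\pi(a))=\overline{\pi\otimes\id_S}(\delta(a))=\pi'(a)$ and $\overline{\zeta}(u)=\overline{\pi\otimes\id_S}(v^*)(u\otimes1_{M(S)})=u'$ were both obtained in the discussion (the latter as the adjoint of the displayed formula for $\overline{\zeta}(u^*)$), so the two homomorphisms coincide. Since $\zeta$ is already known to be a coaction by Theorem~\ref{induced coactions on O_X}, this proves~(ii).

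For part~(iii), the reduced crossed product correspondence $(A(\varphi)\rtimes_\sigma\widehat{S},A\rtimes_\delta\widehat{S})$ exists by Theorem~\ref{crossed product correspondences}. Writing $\sigma_\iota(a)=v_\iota\,\delta_\iota(a)$ shows $A(\varphi)\rtimes_\sigma\widehat{S}=v_\iota\,(A\rtimes_\delta\widehat{S})$, so left multiplication by the unitary $v_\iota$ is an isomorphism of right Hilbert $(A\rtimes_\delta\widehat{S})$-modules (it visibly preserves the right action and the inner product), under which I identify $A(\varphi)\rtimes_\sigma\widehat{S}=A\rtimes_\delta\widehat{S}$ with the transported left action \eqref{Sec.4.Example1}. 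That left action is injective because $\overline{\varphi\otimes\id_{\mathcal{K}(\mathcal{H})}}$ is injective and conjugation by a unitary preserves injectivity; equivalently, it follows from the injectivity clause of Theorem~\ref{crossed product correspondences} since the left action $\varphi_A$ of $A(\varphi)$ is injective. Injectivity of $\varphi_A$ then lets me invoke the second alternative of Corollary~\ref{Sec.5.Cor.varphi.inj.} to conclude that $(k_X\rtimes_\sigma\id_{\widehat{S}},k_A\rtimes_\delta\id_{\widehat{S}})$ is covariant, whence Theorem~\ref{Main.Theorem.} yields $\mathcal{O}_{A(\varphi)\rtimes_\sigma\widehat{S}}\cong\mathcal{O}_{A(\varphi)}\rtimes_\zeta\widehat{S}=(A\rtimes_\varphi\mathbb{Z})\rtimes_\zeta\widehat{S}$. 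Here $J_{A(\varphi)}=A$ is trivially weakly $\delta$-invariant, so all hypotheses are met.

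The only step with genuine computational content is the covariance identity in part~(ii); everything else is bookkeeping that cites the discussion and the theorems of Sections~\ref{Sec.3}--\ref{Sec.5}. I therefore expect no serious obstacle, the most error-prone point being the careful tracking of adjoints together with the conjugation $(u\otimes1_{M(S)})\,\overline{\pi\otimes\id_S}(\delta(a))\,(u^*\otimes1_{M(S)})=\overline{\pi\otimes\id_S}(\overline{\varphi\otimes\id_S}(\delta(a)))$, which must be combined with \eqref{EquivarianceforA,Aw.varphi} in the correct order.
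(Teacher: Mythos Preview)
Your proposal is correct and follows essentially the same route as the paper: the proposition is a summary of the computations carried out in the paragraphs immediately preceding it, and you cite exactly those computations together with Theorem~\ref{induced coactions on O_X}, Theorem~\ref{crossed product correspondences}, Corollary~\ref{Sec.5.Cor.varphi.inj.}, and Theorem~\ref{Main.Theorem.}. The only minor addition is that for~(ii) you verify the covariance relation $u'\pi'(a)(u')^*=\pi'(\varphi(a))$ directly, whereas the paper obtains $\zeta$ from Theorem~\ref{induced coactions on O_X} and simply reads off the formula on generators; your extra computation is correct and harmless.
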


    We can say further if we take the cocycle $v$ in Proposition \ref{Sec.6.Prop.1} to be the identity. Let $v=1_{M(A\otimes S)}$. Then \eqref{EquivarianceforA,Aw.varphi} reduces to
        \[\delta\circ\varphi=\overline{\varphi\otimes{\rm id}_S}\circ\delta,\]
    and then $\varphi_{A\rtimes_\delta\widehat{S}}$ maps $A\rtimes_\delta\widehat{S}$ onto itself:
            \[
            \varphi_{A\rtimes_\delta\widehat{S}}\big(\delta_\iota(a)(1_{M(A)}\otimes x)\big)
            =\overline{\varphi\otimes{\rm id}_{\mathcal{K}(\mathcal{H})}}\big(\delta_\iota(a)(1_{M(A)}\otimes x)\big)
            =\delta_\iota(\varphi(a))(1_{M(A)}\otimes x)
            \]
    for $a\in A$ and $x\in\widehat{S}$. Hence $\varphi_{A\rtimes_\delta\widehat{S}}$ defines an automorphism $\varphi\rtimes{\rm id}$ on $A\rtimes_\delta\widehat{S}$ such that
        \[(\varphi\rtimes{\rm id})\big(\delta_\iota(a)(1_{M(A)}\otimes x)\big)=\delta_\iota(\varphi(a))(1_{M(A)}\otimes x)\]
    for $a\in A$ and $x\in\widehat{S}$. We thus see that $A(\varphi)\rtimes_\sigma\widehat{S}$ is the $(\varphi\rtimes{\rm id})$-cor\-re\-spond\-ence $A\rtimes_\delta\widehat{S}(\varphi\rtimes{\rm id})$. We have the equality
        \begin{equation}\label{Sec.5.Example1}
                \mathcal{O}_{A(\varphi)\rtimes_\sigma\widehat{S}}=\mathcal{O}_{A\rtimes_\delta\widehat{S}(\varphi\rtimes{\rm id})}=(A\rtimes_\delta\widehat{S})\rtimes_{\varphi\rtimes{\rm id}}\mathbb{Z}
        \end{equation}
    as well as
            \begin{equation}\label{Sec.5.Example2}
                \mathcal{O}_{A(\varphi)}\rtimes_\zeta\widehat{S}=(A\rtimes_\varphi\mathbb{Z})\rtimes_\zeta\widehat{S},
            \end{equation}
    and then have a surjective isomorphism
            \[\Psi:(A\rtimes_\delta\widehat{S})\rtimes_{\varphi\rtimes{\rm id}}\mathbb{Z}=\mathcal{O}_{A(\varphi)\rtimes_\sigma\widehat{S}}
            \rightarrow\mathcal{O}_{A(\varphi)}\rtimes_\zeta\widehat{S}=(A\rtimes_\varphi\mathbb{Z})\rtimes_\zeta\widehat{S}.\]
    Let us describe $\Psi$ on the canonical generators of the iterated crossed products $(A\rtimes_\delta\widehat{S})\rtimes_{\varphi\rtimes{\rm id}}\mathbb{Z}$ and $(A\rtimes_\varphi\mathbb{Z})\rtimes_\zeta\widehat{S}$. As $(\pi,u)$ in Proposition~\ref{Sec.6.Prop.1}, let $(\widetilde{\pi},\widetilde{u})$ be the canonical covariant representation of the $C^*$-dynamical system $(A\rtimes_\delta\widehat{S},\,\mathbb{Z},\,\varphi\rtimes{\rm id})$ on $M((A\rtimes_\delta\widehat{S})\rtimes_{\varphi\rtimes{\rm id}}\mathbb{Z})$. Let
        \begin{align*}
        d_1 &= k_{A(\varphi)\rtimes_\delta\widehat{S}}\big(\delta_\iota(a)\cdot(1_{M(A)}\otimes x)\big)\in \mathcal{O}_{A(\varphi)\rtimes_\sigma\widehat{S}}, \\
        d_2 &= \widetilde{u}^{*}\widetilde{\pi}\big(\delta_\iota(a)(1_{M(A)}\otimes x)\big)\in (A\rtimes_\delta\widehat{S})\rtimes_{\varphi\rtimes{\rm id}}\mathbb{Z}, \\
        d_3 &= \zeta_\iota(k_{A(\varphi)}(a))(1_{M(\mathcal{O}_{A(\varphi)})}\otimes x) \in \mathcal{O}_{A(\varphi)}\rtimes_\zeta\widehat{S}, \\
        d_4 &= \zeta_\iota(u^{*}\pi(a))(1_{M(A\rtimes_\varphi\mathbb{Z})}\otimes x)\in (A\rtimes_\varphi\mathbb{Z})\rtimes_\zeta\widehat{S}.
        \end{align*}
    We then have $d_1=d_2$ in \eqref{Sec.5.Example1}, and $d_3=d_4$ in \eqref{Sec.5.Example2}. Since $\Psi(d_1)=d_3$, we may write $\Psi(d_2)=d_4$. Note that $\overline{\Psi}(\widetilde{u})=\overline{\zeta_\iota}(u)$. Therefore
            \[\Psi\big(\widetilde{u}^n\,\widetilde{\pi}\big(\delta_\iota(a)(1_{M(A)}\otimes x)\big)\big)=\zeta_\iota(u^n\pi(a))(1_{M(A\rtimes_\varphi\mathbb{Z})}\otimes x),\]
    or equivalently, by the fact that $(\pi,u)$ and $(\widetilde{\pi},\widetilde{u})$ are covariant representations,
            \begin{equation}\label{Sec.6.Cor.1}
            \Psi\big(\widetilde{\pi}\big(\delta_\iota(a)(1_{M(A)}\otimes x)\big)\,\widetilde{u}^n\big)
            =\zeta_\iota(\pi(a)u^n)(1_{M(A\rtimes_\varphi\mathbb{Z})}\otimes x)
            \end{equation}
    for $a\in A$, $x\in\widehat{S}$, and $n\in\mathbb{Z}$. We summarize this in the next corollary.

    \begin{cor}
        Under the hypothesis and notation in Proposition \ref{Sec.6.Prop.1} with $v$ replaced by $1_{M(A\otimes S)}$, the formula
            \[\zeta(\pi(a)u^n)=\overline{\pi\otimes{\rm id}_S}(\delta(a))(u^n\otimes1_{M(S)})\quad(a\in A,\ n\in\mathbb{Z})\]
        defines a coaction $\zeta$ of $S$ on $A\rtimes_\varphi\mathbb{Z}$. Moreover, if $S$ is reduced then there exists a surjective isomorphism
            \[\Psi:(A\rtimes_\delta\widehat{S})\rtimes_{\varphi\rtimes{\rm id}}\mathbb{Z}
            \rightarrow(A\rtimes_\varphi\mathbb{Z})\rtimes_\zeta\widehat{S}\]
        between the iterated crossed products such that \eqref{Sec.6.Cor.1} holds.
    \end{cor}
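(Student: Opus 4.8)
The plan is to read the statement off as the specialization of the discussion immediately preceding it, taking the cocycle $v=1_{M(A\otimes S)}$ throughout. First I would observe that with this choice the cocycle condition \eqref{EquivarianceforA,Aw.varphi} collapses to the equivariance $\delta\circ\varphi=\overline{\varphi\otimes\id_S}\circ\delta$, which is precisely the standing hypothesis here. Thus Proposition~\ref{Sec.6.Prop.1}(i) applies and $(\sigma,\delta)$, with $\sigma=\delta$, is a coaction of $S$ on the $\varphi$-correspondence $(A(\varphi),A)$. Since $J_{A(\varphi)}=A$ is trivially weakly $\delta$-invariant, Theorem~\ref{induced coactions on O_X} yields the induced coaction $\zeta$ on $\mathcal{O}_{A(\varphi)}=A\rtimes_\varphi\mathbb{Z}$.

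Next I would compute $\zeta$ on the canonical generators by invoking Proposition~\ref{Sec.6.Prop.1}(ii), which exhibits $\zeta$ as the integrated form of the covariant representation $\big(\overline{\pi\otimes\id_S}\circ\delta,\ \overline{\pi\otimes\id_S}(v^*)(u\otimes1_{M(S)})\big)$ of $(A,\mathbb{Z},\varphi)$. Setting $v=1$ reduces the unitary part to $u\otimes1_{M(S)}$, so that integrating the pair gives exactly $\zeta(\pi(a)u^n)=\overline{\pi\otimes\id_S}(\delta(a))(u^n\otimes1_{M(S)})$. This establishes the first assertion, namely that the displayed formula well-defines the coaction $\zeta$ of $S$ on $A\rtimes_\varphi\mathbb{Z}$.

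For the isomorphism, assuming $S$ reduced, I would apply Proposition~\ref{Sec.6.Prop.1}(iii) with $v=1$: then $v_\iota=1$, and the left action \eqref{Sec.4.Example1} on $A(\varphi)\rtimes_\sigma\widehat{S}=A\rtimes_\delta\widehat{S}$ becomes $\overline{\varphi\otimes\id_{\mathcal{K}(\mathcal{H})}}$, which restricts to the automorphism $\varphi\rtimes\id$. This identifies $A(\varphi)\rtimes_\sigma\widehat{S}$ with the $(\varphi\rtimes\id)$-correspondence, and hence by \eqref{Sec.5.Example1} and \eqref{Sec.5.Example2} the two iterated crossed products are precisely the Cuntz-Pimsner algebras $\mathcal{O}_{A(\varphi)\rtimes_\sigma\widehat{S}}$ and $\mathcal{O}_{A(\varphi)}\rtimes_\zeta\widehat{S}$. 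Because $\varphi_A$ is injective, Corollary~\ref{Sec.5.Cor.varphi.inj.} and Theorem~\ref{Main.Theorem.} supply the surjective isomorphism $\Psi$ between them, as recorded in Proposition~\ref{Sec.6.Prop.1}(iii).

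Finally, I would verify \eqref{Sec.6.Cor.1} by tracking $\Psi$ on the canonical generators $d_1,\dots,d_4$ of the preceding discussion: the identity $\overline{\Psi}(\widetilde{u})=\overline{\zeta_\iota}(u)$ together with the equality $\Psi(d_2)=d_4$ and the covariance of $(\pi,u)$ and $(\widetilde{\pi},\widetilde{u})$ allow one to commute $\widetilde{u}^{\,n}$ past $\widetilde{\pi}(\cdots)$ and obtain the stated relation. I do not anticipate a genuine obstacle, since every ingredient has already been assembled; the only points requiring care are confirming that the integrated-form bookkeeping for the covariant representation is consistent when $v=1$ and checking the key identity $\overline{\Psi}(\widetilde{u})=\overline{\zeta_\iota}(u)$ on which \eqref{Sec.6.Cor.1} rests.
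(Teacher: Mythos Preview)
Your proposal is correct and follows essentially the same approach as the paper: the corollary is stated there explicitly as a summary of the discussion immediately preceding it (the paper writes ``We summarize this in the next corollary''), and your outline reproduces that discussion step by step---specializing Proposition~\ref{Sec.6.Prop.1} to $v=1_{M(A\otimes S)}$, identifying $A(\varphi)\rtimes_\sigma\widehat{S}$ with the $(\varphi\rtimes\id)$-correspondence via \eqref{Sec.4.Example1}, and then tracking $\Psi$ on the generators $d_1,\dots,d_4$ to obtain \eqref{Sec.6.Cor.1}.
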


%\begin{appendices}
\appendix

\section{Coactions of $C_0(G)$ on $(X,A)$}\label{App.A}

{\renewcommand*{\thesection}{\Alph{section}}

    The goal of this section is to show that there exists a one-to-one correspondence between actions of a locally compact group $G$ on $(X,A)$ in the sense of \cite{EKQR} and coactions of the commutative Hopf $C^*$-algebra $C_0(G)$ on $(X,A)$ (Theorem~\ref{actions=coactions}). %The proofs in this section and the next will often be sketched due to the fact that the proofs for the $C^*$-algebra counterparts are well-known and easily transferred to the $C^*$-cor\-re\-spond\-ence setting.

    Let us fix some notations. Let $(X,A)$ be a nondegenerate $C^*$-cor\-re\-spond\-ence as before, and $G$ be a locally compact Hausdorff space. By $M(X)_s$ we mean the multiplier correspondence $M(X)$ endowed with the strict topology. We denote by $C_b(G,M(X)_s)$ the Banach space of all bounded continuous functions from $G$ to $M(X)_s$ with the sup-norm, and by $C_0(G,X)$ the closed subspace of $C_b(G,M(X)_s)$ consisting of functions with values in $X$ which are also norm continuous and vanishes at infinity.

    For an identity correspondence $(X,A)=(A,A)$, the Banach space $C_b(G,M(X)_s)$ becomes a $C^*$-algebra under the usual point-wise operations. In this case,
        \[C_b(G,M(X)_s)=M(X\otimes C_0(G))\]
    (\cite[Corollary 3.4]{APT}). We first generalize this in Theorem \ref{Cbstr=M} to nondegenerate $C^*$-cor\-re\-spond\-ences, which will enable one to prove the bijective correspondence between $G$-actions and $C_0(G)$-coactions on $(X,A)$.

    \begin{prop}\label{CbG,MXs}
        The Banach space $C_b(G,M(X)_s)$ is a $C^*$-cor\-re\-spond\-ence over $C_b(G,M(A)_s)$ with the following point-wise operations
            \begin{equation}\label{Appendix.A.BimoduleOperation}
            \begin{aligned}
                (m\cdot l)(r) &= m(r)\cdot l(r), \\
                \langle m,n\rangle_{C_b(G,M(A)_s)}(r) &= \langle m(r),n(r)\rangle_{M(A)}, \\
                \big(\varphi_{C_b(G,M(A)_s)}(l)\,m\big)(r) &= \varphi_{M(A)}(l(r))\,m(r)
            \end{aligned}
            \end{equation}
        for $m,n\in C_b(G,M(X)_s)$, $l\in C_b(G,M(A)_s)$, and $r\in G$.
    \end{prop}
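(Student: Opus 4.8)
The plan is to verify directly that the three point-wise operations in \eqref{Appendix.A.BimoduleOperation} are well-defined (i.e. land in the correct spaces of strictly continuous functions) and satisfy the Hilbert $C^*$-module and left-action axioms. I would organize the proof around two recurring facts that do most of the work: first, that $M(X)$ is the strict completion of $X$ (so strict continuity is the natural notion of continuity for $M(X)$-valued functions); and second, that the Hilbert-module operations on $M(X)$ over $M(A)$ and the left action $\varphi_{M(A)}$ are each separately strictly continuous in the appropriate sense. These continuity properties guarantee that the point-wise formulas send bounded strictly continuous functions to bounded strictly continuous functions, which is exactly the closure requirement for $C_b(G,M(X)_s)$ and $C_b(G,M(A)_s)$.

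First I would check well-definedness. For $m,n\in C_b(G,M(X)_s)$ and $l\in C_b(G,M(A)_s)$, each of $r\mapsto m(r)\cdot l(r)$, $r\mapsto\langle m(r),n(r)\rangle_{M(A)}$, and $r\mapsto\varphi_{M(A)}(l(r))\,m(r)$ is clearly bounded in norm since the operations are contractive (or bounded) on bounded sets. For strict continuity one uses that multiplication $M(X)\times M(A)\to M(X)$, the inner product $M(X)\times M(X)\to M(A)$, and the composition $\varphi_{M(A)}(\cdot)(\cdot)$ are jointly strictly continuous on norm-bounded sets; combined with the strict continuity of $m,n,l$ as functions of $r$, this yields strict continuity of the product functions. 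I would then verify the algebraic identities pointwise: that $\langle\,\cdot\,,\,\cdot\,\rangle$ is conjugate-linear in the first and linear in the second variable, positive with $\langle m,m\rangle_{C_b}(r)=\langle m(r),m(r)\rangle_{M(A)}\geq0$ for all $r$, that $\langle m,n\cdot l\rangle=\langle m,n\rangle\,l$, and that $\varphi_{C_b}$ is a homomorphism respecting the inner product. Each of these reduces to the corresponding identity in $(M(X),M(A))$ evaluated at each $r\in G$, hence holds by the $C^*$-correspondence structure of $(M(X),M(A))$ recorded in \eqref{Prel.module.ops} and \eqref{Prel.module.ops.2}.

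The completeness of $C_b(G,M(X)_s)$ as a Hilbert module over the $C^*$-algebra $C_b(G,M(A)_s)$ requires a short argument: the module norm $\|m\|=\sup_{r}\|\langle m(r),m(r)\rangle_{M(A)}\|^{1/2}=\sup_r\|m(r)\|$ coincides with the sup-norm, so a Cauchy sequence in the module norm is uniformly Cauchy in $M(X)$-norm, and its pointwise limit lies in $M(X)$ and inherits strict continuity from uniform convergence; this shows the limit lies in $C_b(G,M(X)_s)$. The step I expect to be the main obstacle is the strict continuity of the pointwise products, since strict continuity (unlike norm continuity) is not automatically preserved under products without the joint-continuity-on-bounded-sets property. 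I would handle this by fixing $T\in\mathcal{K}(X)$ and $a\in A$ and estimating, for the multiplier $m(r)\cdot l(r)$, the quantities $T\big(m(r)\cdot l(r)\big)$ and $\big(m(r)\cdot l(r)\big)\cdot a$ in norm, reducing each to strict convergence of $m$ together with norm boundedness of $l$ (and using that $l(r)a$ converges in norm when $l$ is strictly continuous); the remaining two operations are treated analogously. This reduction to the defining seminorms of the strict topology on $M(X)$ is the technical heart of the verification, after which the $C^*$-correspondence axioms follow pointwise from those of $(M(X),M(A))$.
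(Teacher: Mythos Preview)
Your outline is correct and matches the paper's approach: both isolate strict continuity of the three pointwise operations as the only nontrivial point, and both verify it by testing against the defining seminorms $m\mapsto\|Tm\|$ and $m\mapsto\|m\cdot a\|$ for $T\in\mathcal{K}(X)$, $a\in A$. The paper in fact skips the algebraic identities and completeness entirely, treating only the strict-continuity check (and only for the left action $\varphi(l)m$, leaving the others as analogous).

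One point worth sharpening: your appeal to ``joint strict continuity on norm-bounded sets'' of the module operations is not a ready-made fact, and the paper's proof shows exactly what drives it. For instance, to see that $T\big(\varphi_{M(A)}(l(r_i))m(r_i)\big)\to T\big(\varphi_{M(A)}(l(r))m(r)\big)$, the paper invokes the Hewitt--Cohen factorization theorem to write $T=T'\varphi_A(a')$ with $T'\in\mathcal{K}(X)$ and $a'\in A$ (using nondegeneracy of $\varphi_A$), so that $T\varphi_{M(A)}(l(r_i))=T'\varphi_A(a'l(r_i))$ converges in norm because $a'l(r_i)\to a'l(r)$ in $A$. The same factorization trick (e.g.\ $\xi=\xi'\cdot a'$ for $\xi\in X$) is what makes your reductions for $m\cdot l$ and $\langle m,n\rangle$ go through on the $T$-side. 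Your final paragraph is heading exactly here; just be explicit that Cohen--Hewitt factorization (via nondegeneracy of $\varphi_A$ and of the right $A$-action) is the mechanism, rather than an abstract joint-continuity statement.
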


    \begin{proof}
        Write $\varphi=\varphi_{C_b(G,M(A)_s)}$. The only part requiring proof is that the functions on \eqref{Appendix.A.BimoduleOperation} are strictly continuous. We prove this only for the function $\varphi(l)\,m$. The others can be handled in the same way. Let $\{r_i\}$ be a net in $G$ converging to an $r\in G$, $a\in A$, and $T\in\mathcal{K}(X)$. Evidently, $(\varphi(l)m)(r_i)\cdot a-(\varphi(l)m)(r)\cdot a$ converges to 0. Factor $T=T'\varphi_A(a')$ for some $T'\in\mathcal{K}(X)$ and $a'\in A$, which is possible by the Hewitt-Cohen factorization theorem (see for example \cite[Proposition 2.33]{RaWi}) since the left action $\varphi_A$ is nondegenerate. Then the difference
        \begin{align*}
            T(\varphi(l)m)(r_i)-T(\varphi(l)m)(r)
            &=\big(T'\varphi_A(a'l(r_i))\,m(r_i)-T'\varphi_A(a'l(r))\,m(r_i)\big) \\
            &\quad + \big(T\varphi_{M(A)}(l(r))\,m(r_i)-T\varphi_{M(A)}(l(r))\,m(r)\big)
        \end{align*}
        converges to 0 by the strict continuity of both $l$ and $m$ and also by the boundedness of $m$. Hence $\varphi(l)m$ is strictly continuous.
    \end{proof}

    It is clear that $(C_0(G,X),C_0(G,A))$ is also a $C^*$-cor\-re\-spon\-dence with respect to the restriction of operations \eqref{Appendix.A.BimoduleOperation}.

    We call a correspondence homomorphism $(\psi,\pi):(X,A)\rightarrow(Y,B)$ an \emph{isomorphism} if both $\psi$ and $\pi$ are bijective. In this case, $(X,A)$ and $(Y,B)$ are said to be \emph{isomorphic}. The next corollary is an easy consequence of Corollary \ref{O_(XotimesB)=O_XotimesB}.

    \begin{cor}\label{C0GX=XotimesC0X}
        The $C^*$-correspondence $(C_0(G,X),C_0(G,A))$ and the tensor product correspondence $(X\otimes C_0(G),A\otimes C_0(G))$ are isomorphic.
    \end{cor}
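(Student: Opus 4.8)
The plan is to write down an explicit correspondence isomorphism $(\theta_X,\theta_A)$ from the tensor product correspondence $(X\otimes C_0(G),A\otimes C_0(G))$ onto $(C_0(G,X),C_0(G,A))$ and to check that it intertwines all the pointwise operations \eqref{Appendix.A.BimoduleOperation} of Proposition~\ref{CbG,MXs}. First I would define $\theta_A:A\otimes C_0(G)\rightarrow C_0(G,A)$ and $\theta_X:X\otimes C_0(G)\rightarrow C_0(G,X)$ on elementary tensors by $\theta_A(a\otimes f)(r)=f(r)\,a$ and $\theta_X(\xi\otimes f)(r)=f(r)\,\xi$, and extend them linearly and continuously to the completions. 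That $\theta_A$ is a $*$-isomorphism of $C^*$-algebras is the standard identification $A\otimes C_0(G)=C_0(G,A)$, i.e.\ the identity-correspondence case of the generalization of \cite[Corollary~3.4]{APT}; here exactness (indeed nuclearity) of $C_0(G)$ supplies the slice-map property so that, as in Corollary~\ref{Sec.3.prop.for.thm.in.Sec.5} underlying Corollary~\ref{O_(XotimesB)=O_XotimesB}, this identification is compatible with passing to ideals and to compacts.

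Next I would verify that $(\theta_X,\theta_A)$ is a correspondence homomorphism by a pointwise computation on elementary tensors, using that $\varphi_{A\otimes C_0(G)}=\varphi_A\otimes\id$ and that both $\varphi_{C_0(G,A)}$ and the inner product on $C_0(G,A)$ act pointwise. For inner products one gets
\[
\langle\theta_X(\xi\otimes f),\theta_X(\eta\otimes g)\rangle(r)=\langle f(r)\xi,g(r)\eta\rangle_A=\overline{f(r)}\,g(r)\,\langle\xi,\eta\rangle_A=\theta_A\big(\langle\xi\otimes f,\eta\otimes g\rangle\big)(r),
\]
so condition (ii) in the definition of a correspondence homomorphism holds; for the left action one computes $\theta_X\big(\varphi_{A\otimes C_0(G)}(a\otimes f)(\xi\otimes g)\big)(r)=f(r)g(r)\,\varphi_A(a)\xi=\big(\varphi_{C_0(G,A)}(\theta_A(a\otimes f))\,\theta_X(\xi\otimes g)\big)(r)$, which is condition (i). Both identities then extend from the dense subspaces $X\odot C_0(G)$ and $A\odot C_0(G)$ to the full completions by continuity.

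It remains to see that $\theta_X$ is bijective, which, together with the bijectivity of $\theta_A$, makes $(\theta_X,\theta_A)$ an isomorphism in the sense defined above. Since $\theta_A$ is injective, $\theta_X$ is automatically isometric (as recorded in the Preliminaries), hence injective with closed range. For surjectivity I would show that $\theta_X(X\odot C_0(G))$ is dense in $C_0(G,X)$: the elementary functions $r\mapsto f(r)\xi$ span a subspace that is dense in the sup-norm of $C_0(G,X)$ by a standard partition-of-unity / Stone--Weierstrass argument over the locally compact space $G$ together with nondegeneracy of $X$. Closed range plus dense range yields that $\theta_X$ is onto.

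The main obstacle I anticipate is analytic bookkeeping rather than conceptual: one must confirm that $\theta_X$ genuinely takes values in $C_0(G,X)$ --- that the image functions are \emph{norm}-continuous and vanish at infinity, not merely strictly continuous and bounded as a general element of $C_b(G,M(X)_s)$ would be --- and that the density argument for surjectivity is uniform enough to survive the passage to completions. Once the target $C_0(G,X)$ is correctly pinned down inside $C_b(G,M(X)_s)$, everything else reduces to the routine extension-by-continuity of the two pointwise identities above.
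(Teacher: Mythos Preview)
Your argument is correct and is exactly the standard direct construction one would expect. Two small clean-ups: the identification $A\otimes C_0(G)\cong C_0(G,A)$ does not need the slice-map property or exactness of $C_0(G)$---it is the elementary fact that $C_0(G,A)$ realizes the (unique) $C^*$-tensor product with the nuclear algebra $C_0(G)$---so you can drop that detour through Corollaries~\ref{Sec.3.prop.for.thm.in.Sec.5} and \ref{O_(XotimesB)=O_XotimesB}; and the density of $\theta_X(X\odot C_c(G))$ in $C_0(G,X)$ is a pure partition-of-unity argument on $X$-valued functions and does not use nondegeneracy of the left action $\varphi_A$.

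By contrast, the paper gives no proof at all: it simply declares the corollary ``an easy consequence of Corollary~\ref{O_(XotimesB)=O_XotimesB}'' (the isomorphism $\mathcal{O}_{X\otimes C}\cong\mathcal{O}_X\otimes C$). That citation does not in any obvious way produce an isomorphism of the underlying correspondences, and the author almost certainly just means that the result is routine and leaves it to the reader. Your explicit $(\theta_X,\theta_A)$, the pointwise verification of the correspondence-homomorphism identities, and the isometry-plus-dense-range argument for bijectivity are precisely what is needed to fill that gap.
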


     \begin{lem}\label{Preparation.for.Applying.EKQR}
        With respect to the operations \eqref{Appendix.A.BimoduleOperation}, the following hold.
        \begin{itemize}%\setlength{\itemsep}{0.4ex plus 0.2ex minus 0.2ex}
            \item[\rm(i)] $\varphi_{C_b(G,M(A)_s)}\big(C_b(G,M(A)_s)\big)\,C_0(G,X)= C_0(G,X)$,
            \item[\rm(ii)] $C_0(G,X)\cdot C_b(G,M(A)_s)= C_0(G,X)$,
            \item[\rm(iii)] $C_b(G,M(X)_s)\cdot C_0(G,A)= C_0(G,X)$.
        \end{itemize}
    \end{lem}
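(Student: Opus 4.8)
The plan is to treat the three inclusions in parallel, isolating the only genuinely nontrivial point --- that the various point-wise products are \emph{norm}-continuous (so that they actually land in $C_0(G,X)$) --- and to dispose of the reverse inclusions by cheap means. First I would record that $C_b(G,M(A)_s)$ is unital, its unit being the constant function $\mathbf{1}\colon r\mapsto 1_{M(A)}$, which indeed lies in $C_b(G,M(A)_s)$. Since $\varphi_{M(A)}=\overline{\varphi_A}$ is the strict extension of the nondegenerate homomorphism $\varphi_A$, we have $\varphi_{M(A)}(1_{M(A)})=\mathrm{id}_X$, so $\varphi_{C_b(G,M(A)_s)}(\mathbf{1})$ acts as the identity on $C_b(G,M(X)_s)$ and $m\cdot\mathbf{1}=m$ for every $m$. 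This yields the inclusions $\supseteq$ in (i) and (ii) at once, since every $m\in C_0(G,X)$ equals $\varphi_{C_b(G,M(A)_s)}(\mathbf{1})\,m=m\cdot\mathbf{1}$. For $\supseteq$ in (iii), I would use that the Hilbert $C_0(G,A)$-module $C_0(G,X)$ is nondegenerate, so by the Hewitt--Cohen factorization theorem (cf.\ \cite[Proposition 2.33]{RaWi}) each $f\in C_0(G,X)$ factors as $f=\eta\cdot g$ with $\eta\in C_0(G,X)$ and $g\in C_0(G,A)$; viewing $\eta$ inside $C_b(G,M(X)_s)$ then exhibits $f\in C_b(G,M(X)_s)\cdot C_0(G,A)$.

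For the inclusions $\subseteq$ I would check in each case that the point-wise product of the two factors lies in $C_0(G,X)$. That the values lie in $X$ is immediate: in (i) because $\varphi_{M(A)}(l(r))m(r)\in X$ as $m(r)\in X$; in (ii) because $X\cdot M(A)=\mathcal{K}(A,X)\,\mathcal{L}(A)\subseteq X$; in (iii) because $M(X)\cdot A=\mathcal{L}(A,X)\,\mathcal{K}(A)\subseteq X$. Vanishing at infinity (and boundedness) follows from boundedness of the $C_b$-factor together with the $C_0$-factor vanishing at infinity. The remaining and only substantive point is norm-continuity.

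The norm-continuity estimate is the crux, and I would handle all three cases by the same device. Fixing $r_0\in G$ and a net $r\to r_0$, I split the relevant difference into two summands, e.g.\ in case (ii)
\[
m(r)l(r)-m(r_0)l(r_0)=\bigl(m(r)-m(r_0)\bigr)l(r)+m(r_0)\bigl(l(r)-l(r_0)\bigr).
\]
The first summand is bounded in norm by $\|m(r)-m(r_0)\|\,\|l\|$ and vanishes by norm-continuity of the $C_0$-factor. The second summand carries the fixed element $m(r_0)\in X$ on the left, and I would upgrade the merely strict convergence $l(r)\to l(r_0)$ to norm convergence against it by factoring $m(r_0)=\eta\cdot a$ with $a\in A$ (using $X\cdot A=X$): then $m(r_0)\bigl(l(r)-l(r_0)\bigr)=\eta\bigl(a\,l(r)-a\,l(r_0)\bigr)\to0$ because $a\,l(r)\to a\,l(r_0)$ in norm by strict continuity of $l$. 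Case (i) is identical after factoring $m(r_0)=\varphi_A(a)\eta$ through the nondegeneracy $\overline{\varphi_A(A)X}=X$ and using $\varphi_{M(A)}(l(r))\varphi_A(a)=\varphi_A(l(r)a)$ with $l(r)a\to l(r_0)a$ in norm. Case (iii) is even simpler: the offending term $\bigl(m(r)-m(r_0)\bigr)g(r_0)$ already has $g(r_0)\in A$, so $m(r)g(r_0)\to m(r_0)g(r_0)$ directly from the definition of the strict topology on $M(X)$, needing no factorization. Assembling the two inclusions in each case gives the three asserted equalities. The main obstacle throughout is precisely this passage from strict to norm continuity, which is exactly why factoring the fixed $X$-valued term via Hewitt--Cohen is essential in (i) and (ii).
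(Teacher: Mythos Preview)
Your proof is correct. The reverse inclusions and the treatment of (iii)$\,\subseteq$ match the paper's proof closely. Where you diverge from the paper is in the inclusions $\subseteq$ for (i) and (ii): the paper applies Hewitt--Cohen \emph{globally} to the $C_0(G,A)$-module $C_0(G,X)$, writing (for (i)) an arbitrary $x\in C_0(G,X)$ as $x=\varphi_{C_0(G,A)}(f)y$ with $f\in C_0(G,A)$ and $y\in C_0(G,X)$, and then simply observing that $\varphi(l)x=\varphi_{C_0(G,A)}(lf)y\in C_0(G,X)$ because $lf\in C_0(G,A)$ (using $C_b(G,M(A)_s)=M(C_0(G,A))$). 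This sidesteps any explicit continuity check. You instead work pointwise, splitting by the triangle inequality and invoking Hewitt--Cohen only at the single point $r_0$ inside $X$ to convert strict convergence to norm convergence. Your route is slightly longer but more self-contained, as it does not rely on the identification $C_b(G,M(A)_s)=M(C_0(G,A))$; the paper's route is slicker precisely because that identification absorbs the continuity verification into the module structure for free.
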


    \begin{proof}
        It is obvious that the inclusion $\supseteq$ holds on each of (i) and (ii). The same is true for (iii) by the Hewitt-Cohen factorization theorem since $(C_0(G,X),C_0(G,A))$ is isomorphic to the nondegenerate $C^*$-cor\-re\-spond\-ence $(X\otimes C_0(G),A\otimes C_0(G))$. For the inclusion $\subseteq$ in (i), let $l\in C_b(G,M(A)_s)$ and $x\in C_0(G,X)$, and write $x=\varphi_{C_0(G,A)}(f)y$ for some $f\in C_0(G,A)$ and $y\in C_0(G,X)$. Then
            \[\varphi_{C_b(G,M(A)_s)}(l)\,x=\varphi_{C_0(G,A)}(lf)\,y\in C_0(G,X),\]
        which proves (i). Similarly we have the inclusion $\subseteq$ in (ii). Finally, the triangle inequality verifies that the functions in the left-hand side space of (iii) are continuous, which gives $\subseteq$ in (iii).
    \end{proof}

    Henceforth, we identify $C_0(G,X)=X\otimes C_0(G)$ as well as $C_b(G,M(A)_s)=M(A\otimes C_0(G))$. The next theorem generalizes \cite[Corollary 3.4]{APT}.

    \begin{thm}\label{Cbstr=M}
        The map
            \[(\psi,{\rm id}):(C_b(G,M(X)_s),C_b(G,M(A)_s))\rightarrow(M(X\otimes C_0(G)),M(A\otimes C_0(G)))\]
        given by
            \begin{equation*}\label{Appendix.A.Cbstr=M}
                \psi(m)\cdot f=m\cdot f
            \end{equation*}
        for $m\in C_b(G,M(X)_s)$ and $f\in A\otimes C_0(G)$ is an isomorphism.
    \end{thm}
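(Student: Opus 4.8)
The plan is to show that $(\psi,\mathrm{id})$ is a well-defined, injective and surjective correspondence homomorphism; since under the identification $C_b(G,M(A)_s)=M(A\otimes C_0(G))$ of \cite[Corollary 3.4]{APT} the base map is the identity, \emph{isomorphism} of correspondences follows once $\psi$ is a bijective isometry intertwining the two structures. First I would check that $\psi$ is well defined, i.e. that $\psi(m)$ really lies in $M(X\otimes C_0(G))=\mathcal{L}(A\otimes C_0(G),X\otimes C_0(G))$. For $f\in A\otimes C_0(G)=C_0(G,A)$ the function $m\cdot f$ belongs to $C_0(G,X)=X\otimes C_0(G)$ by Lemma~\ref{Preparation.for.Applying.EKQR}.(iii), and $\psi(m)$ is plainly $C_0(G,A)$-linear and bounded by $\|m\|_\infty$. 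The one real point is adjointability, and here Proposition~\ref{CbG,MXs} does the work for free: for $g\in C_0(G,X)\subseteq C_b(G,M(X)_s)$ the inner-product field $\langle m,g\rangle$, given pointwise by $r\mapsto m(r)^*g(r)$, is strictly continuous and so lies in $C_b(G,M(A)_s)$; as its values lie in $A$ and vanish at infinity, it in fact lies in $C_0(G,A)=A\otimes C_0(G)$. Thus $g\mapsto\langle m,g\rangle$ is the adjoint of $\psi(m)$, and $\psi(m)$ is adjointable.

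Granting this, $(\psi,\mathrm{id})$ is a correspondence homomorphism: by the same computation, $\langle\psi(m),\psi(n)\rangle_{M(A\otimes C_0(G))}=\psi(m)^*\psi(n)$ is the field $r\mapsto m(r)^*n(r)=\langle m,n\rangle_{C_b(G,M(A)_s)}(r)$, which is exactly $\mathrm{id}(\langle m,n\rangle)$, while compatibility with the left actions, $\psi(\varphi_{C_b(G,M(A)_s)}(l)m)=\varphi_{M(A\otimes C_0(G))}(l)\psi(m)$, is immediate pointwise from \eqref{Appendix.A.BimoduleOperation}. Injectivity is easy: if $\psi(m)=0$ then testing against $f=a\otimes\phi$ gives $\phi(r)\,m(r)\cdot a=0$ for all $r,\phi,a$, whence $m(r)\cdot a=0$ and therefore $m(r)=0$ for every $r$ (recall $m(r)\in M(X)=\mathcal{L}(A,X)$ and $\overline{M(X)\cdot A}=X$), i.e. $m=0$. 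Being a correspondence homomorphism with injective base map, $\psi$ is then automatically isometric.

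The substantial step, and the one I expect to be the main obstacle, is surjectivity. Given $T\in\mathcal{L}(A\otimes C_0(G),X\otimes C_0(G))$, the idea is to recover a field $r\mapsto m(r)\in M(X)$ by exploiting that $T$, being a right $(A\otimes C_0(G))$-module map, commutes with the central copy $1\otimes C_0(G)$; this forces $T$ to be \emph{local} over $G$, so that for $a\in A$ and $\phi\in C_0(G)$ one has $\big(T(a\otimes\phi)\big)(r)=\phi(r)\,m(r)\cdot a$ for a well-defined $m(r)\in\mathcal{L}(A,X)=M(X)$, with $\|m(r)\|\le\|T\|$ and with adjoint field furnished by $T^*$. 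It then remains to prove that $r\mapsto m(r)$ is bounded and \emph{strictly} continuous, i.e. that $r\mapsto m(r)\cdot a$ and $r\mapsto \tilde{T}\,m(r)$ are norm continuous for $a\in A$ and $\tilde{T}\in\mathcal{K}(X)$. The first follows at once from the norm continuity of $T(a\otimes\phi)\in C_0(G,X)$. For the second I would write $\tilde T$ as a limit of operators $\theta_{\xi,\eta}$ and use $\|\theta_{\xi,\eta}m(r)\|=\|\xi\cdot(m(r)^*\eta)^*\|$, so that norm continuity of $r\mapsto\theta_{\xi,\eta}m(r)$ reduces to continuity of the \emph{adjoint} field $r\mapsto m(r)^*\eta$; this last continuity is obtained by applying the locality argument to $T^*$. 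Finally $\psi(m)$ and $T$ agree on the dense subspace $A\odot C_0(G)$, giving $\psi(m)=T$.

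This fiberwise decomposition is exactly the $C^*$-correspondence upgrade of \cite[Corollary 3.4]{APT}, and the care needed is concentrated in verifying boundedness together with both halves of strict continuity of the recovered field; notably, it is the passage through $T^*$ that supplies the control of $r\mapsto\tilde T m(r)$, which pointwise data about $T$ alone does not give. (Alternatively, surjectivity could be deduced by applying the algebra case \cite[Corollary 3.4]{APT} to the linking algebra $\mathcal{K}(X\oplus A)$ and restricting the resulting isomorphism to the off-diagonal corner, the forward direction being cleaner to handle directly as above.)
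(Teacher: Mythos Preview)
Your proposal is correct and follows essentially the same route as the paper: you recover the pointwise field $m(r)$ from a given multiplier, check boundedness, and verify both halves of strict continuity exactly as the paper does (in particular, handling $\tilde T\,m(r)$ via the identity $\theta_{\xi,\eta}m(r)=\theta_{\xi,\,m(r)^*\eta}$ and the continuity of the adjoint field coming from $T^*$). The only difference is cosmetic: for the forward direction the paper packages your hand checks of well-definedness, adjointability and the correspondence-homomorphism axioms into a single appeal to \cite[Proposition~1.28]{EKQR}, using Lemma~\ref{Preparation.for.Applying.EKQR} to verify its hypotheses.
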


    \begin{proof}
        By Lemma \ref{Preparation.for.Applying.EKQR}, we can apply \cite[Proposition 1.28]{EKQR} to see that $(\psi,{\rm id})$ is an injective correspondence homomorphism. It thus remains to show that $\psi$ is surjective. Let $n\in M(X\otimes C_0(G))$. For each $r\in G$, define $m_n(r):A\rightarrow X$ and $m_n^*(r):X\rightarrow A$ by
            \begin{equation*}\label{Appendix.A.1}
                m_n(r)(a):=\big(n\cdot(a\otimes\phi_r)\big)(r),\quad m_n^*(r)(\xi):=\big(n^*(\xi\otimes\phi_r)\big)(r),
            \end{equation*}
        where $\phi_r\in C_c(G)$ such that $\phi_r\equiv1$ on a neighborhood of $r$. It is immaterial which $\phi_r$ we take to define $m_n(r)$ and $m_n^*(r)$ as long as $\phi_r\equiv1$ near $r$. Since
            \[\langle n\cdot(a\otimes\phi_r),\xi\otimes\phi_r\rangle_{A\otimes C_0(G)}=\langle a\otimes\phi_r,n^*(\xi\otimes\phi_r)\rangle_{A\otimes C_0(G)},\]
        we have $\langle m_n(r)\cdot a,\xi\rangle_A=\langle a,m_n^*(r)\xi\rangle_A$ by evaluating at $r$, and thus obtain a function $m_n:G\rightarrow M(X)$ with $m_n(r)^*=m_n^*(r)$. By definition, $\|m_n(r)\|\leq\|n\|$ for $r\in G$, and hence $m_n$ is bounded. To see that $m_n$ is strictly continuous, let $\{r_i\}$ be a net in $G$ converging to an $r\in G$, $a\in A$, and $\xi,\eta\in X$. Evidently, $\{m_n(r_i)\cdot a\}$ converges to $m_n(r)\cdot a$. The same is true for the net $\{m_n(r_i)^*\xi\}$, and hence $\{\theta_{\eta,\xi}m_n(r_i)\}=\{\theta_{\eta,m_n(r_i)^*\xi}\}$ converges to $\theta_{\eta,m_n(r)^*\xi}=\theta_{\eta,\xi}m_n(r)$, and consequently $\{Tm_n(r_i)\}$ converges to $Tm_n(r)$ for $T\in\mathcal{K}(X)$. Therefore $m_n\in C_b(G,M(X)_s)$. Finally, we have
            \[\big(n\cdot(a\otimes\phi_r)\big)(r)=m_n(r)\cdot a = \big(m_n\cdot(a\otimes\phi_r)\big)(r)\]
        for $a\in A$ and $r\in G$, which shows $\psi(m_n)=n$.
    \end{proof}

    Let $Aut(X,A)$ be the group of isomorphisms from $(X,A)$ onto itself. Recall from \cite[Definition 2.5]{EKQR} that an \emph{action} of a locally compact group $G$ on $(X,A)$ is a homomorphism $(\gamma,\alpha):G\rightarrow Aut(X,A)$ such that for each $\xi\in X$ and $a\in A$, the maps
        \[G\ni r\mapsto\gamma_r(\xi)\in X,\quad G\ni r\mapsto\alpha_r(a)\in A\]
    are both continuous. The formulas
    \begin{equation}\label{actions=coactions.formulars}
                \sigma^\gamma(\xi)(r)=\gamma_r(\xi),\quad\delta^\alpha(a)(r)=\alpha_r(a)
            \end{equation}
    then define elements $\sigma^\gamma(\xi)\in M(X\otimes C_0(G))$ and $\delta^\alpha(a)\in M(A\otimes C_0(G))$ for $\xi\in X$ and $a\in A$ by Theorem~\ref{Cbstr=M}. Note that $(\sigma^\gamma,\delta^\alpha)$ is by definition a correspondence homomorphism. The proof of the next theorem is omitted since the $C^*$-algebra counterpart is well-known (for example, \cite[Theorem~9.2.4]{Timm}) and readily transferred to the $C^*$-cor\-re\-spond\-ence setting with an aid of Theorem~\ref{Cbstr=M}.

    \begin{thm}\label{actions=coactions}
        The formulas \eqref{actions=coactions.formulars} define a one-to-one correspondence between actions of $G$ on $(X,A)$ and coactions of $C_0(G)$ on $(X,A)$.
    \end{thm}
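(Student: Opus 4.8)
The plan is to push everything through the identification of Theorem~\ref{Cbstr=M}, under which $M(X\otimes C_0(G))=C_b(G,M(X)_s)$ is a correspondence over $M(A\otimes C_0(G))=C_b(G,M(A)_s)$ with the pointwise operations \eqref{Appendix.A.BimoduleOperation}; this turns every condition in Definition~\ref{DefofCoactions} into a pointwise statement about the functions $r\mapsto\gamma_r(\xi)$ and $r\mapsto\alpha_r(a)$. Throughout I would take as known the corresponding $C^*$-algebra bijection (for instance \cite[Theorem~9.2.4]{Timm}) between actions $\alpha$ of $G$ on $A$ and coactions $\delta^\alpha$ of $C_0(G)$ on $A$: this already settles that $\delta^\alpha$ is a coaction of $C_0(G)$ on $A$, that $\alpha_r(a)=\delta(a)(r)$ recovers an action from a coaction $\delta$, and that these are mutually inverse. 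The only genuinely new content is the compatibility of the Hilbert-module data carried by $\sigma$ and $\gamma$, on which I focus.

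For the forward direction, given an action $(\gamma,\alpha)$, I would first note that each $\gamma_r$ is isometric, since $\langle\gamma_r(\xi),\gamma_r(\eta)\rangle_A=\alpha_r(\langle\xi,\eta\rangle_A)$ and $\alpha_r$ is isometric; hence $\sigma^\gamma(\xi)\colon r\mapsto\gamma_r(\xi)$ is bounded, and being norm continuous into $X$ by the definition of an action (the embedding $X\hookrightarrow M(X)_s$ is continuous), it lies in $C_b(G,M(X)_s)=M(X\otimes C_0(G))$. That $(\sigma^\gamma,\delta^\alpha)$ is a correspondence homomorphism is then a pointwise check against \eqref{Appendix.A.BimoduleOperation}: the inner-product and left-action identities evaluate at $r$ to $\alpha_r(\langle\xi,\eta\rangle_A)=\langle\gamma_r(\xi),\gamma_r(\eta)\rangle_A$ and $\gamma_r(\varphi_A(a)\xi)=\varphi_A(\alpha_r(a))\gamma_r(\xi)$, both of which hold because $(\gamma_r,\alpha_r)$ is an isomorphism of $(X,A)$. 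For the coaction identity, under the identification of $M(X\otimes C_0(G)\otimes C_0(G))$ with functions on $G\times G$ and $\Delta_G(f)(r,s)=f(rs)$, both sides of $\overline{\sigma^\gamma\otimes\id}\circ\sigma^\gamma=\overline{\id_X\otimes\Delta_G}\circ\sigma^\gamma$ evaluate at $(r,s)$ to $\gamma_r(\gamma_s(\xi))$ and $\gamma_{rs}(\xi)$, so the identity is precisely the homomorphism property $\gamma_r\gamma_s=\gamma_{rs}$. Finally, the coaction nondegeneracy $\overline{\varphi_{M(A\otimes C_0(G))}(1_{M(A)}\otimes C_0(G))\,\sigma^\gamma(X)}=X\otimes C_0(G)$ follows from the surjectivity of each $\gamma_r$ together with the module identities of Lemma~\ref{Preparation.for.Applying.EKQR}, since pointwise these elements are $r\mapsto f(r)\gamma_r(\xi)$ and their closed span is all of $C_0(G,X)$.

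For the reverse direction, given a coaction $(\sigma,\delta)$ I would set $\gamma_r(\xi):=\sigma(\xi)(r)$ and $\alpha_r(a):=\delta(a)(r)$. That $\gamma_r(\xi)\in X$, a priori only in $M(X)$, follows from $\overline{\sigma(X)\cdot(1_{M(A)}\otimes C_0(G))}=X\otimes C_0(G)=C_0(G,X)$: for $f\in C_0(G)$ with $f(r)=1$ one has $(\sigma(\xi)\cdot(1\otimes f))(r)=\gamma_r(\xi)\in X$. Evaluating the correspondence-homomorphism and coaction identities pointwise shows that each $(\gamma_r,\alpha_r)$ is an isometric correspondence homomorphism of $(X,A)$ and that $\gamma_r\gamma_s=\gamma_{rs}$; applying the counit of $C_0(G)$ (evaluation at the identity $e$) to the coaction identity gives $\gamma_e=\id_X$, whence $\gamma_r\gamma_{r^{-1}}=\gamma_{r^{-1}}\gamma_r=\id_X$ and each $(\gamma_r,\alpha_r)$ is invertible, i.e.\ lies in $\mathrm{Aut}(X,A)$. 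Norm continuity of $r\mapsto\gamma_r(\xi)$ into $X$ follows because $\sigma(\xi)\cdot(1\otimes f)\in C_0(G,X)$ is norm continuous for every $f\in C_0(G)$, and taking $f\equiv1$ near a given point yields norm continuity there. The two assignments are mutually inverse by construction, which together with the $C^*$-algebra bijection for $\alpha\leftrightarrow\delta^\alpha$ finishes the proof.

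I expect the main obstacle to be the reverse direction: combining the coaction nondegeneracy of $\sigma$ with the counit of $C_0(G)$ to produce the inverse $\gamma_{r^{-1}}$ and thereby confirm that each recovered $\gamma_r$ is a genuine \emph{isomorphism} of $(X,A)$, and dually, on the forward side, deducing the coaction nondegeneracy of $\sigma^\gamma$. By contrast, the boundedness, continuity, and the pointwise algebraic identities are routine once the identification of Theorem~\ref{Cbstr=M} is in hand.
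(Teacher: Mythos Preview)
Your proposal is correct and follows exactly the route the paper indicates: the paper omits the proof entirely, remarking only that the $C^*$-algebra case \cite[Theorem~9.2.4]{Timm} transfers to the correspondence setting via Theorem~\ref{Cbstr=M}. Your write-up carries out precisely this transfer, reducing each axiom of Definition~\ref{DefofCoactions} to a pointwise statement through the identification $M(X\otimes C_0(G))=C_b(G,M(X)_s)$ and invoking the known $C^*$-algebra bijection for the $\delta^\alpha\leftrightarrow\alpha$ part; the additional checks for $\sigma^\gamma\leftrightarrow\gamma$ (correspondence-homomorphism identities, coaction identity, nondegeneracy, and in the reverse direction landing in $X$ and invertibility via $\gamma_e=\id_X$) are all handled correctly.
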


}

\section{$C^*$-correspondences $X\rtimes_\sigma\widehat{S}_{\widehat{W}_G}$}\label{App.B}

{\renewcommand*{\thesection}{\Alph{section}}

    It is well-known that $\mathcal{L}_A(A\otimes\mathcal{H})=M(A\otimes\mathcal{K}(\mathcal{H}))$ for a $C^*$-algebra $A$ and a Hilbert space $\mathcal{H}$. We generalize this in Proposition \ref{Appendix.B.Prop.1} to a nondegenerate $C^*$-cor\-re\-spond\-ence:
        \[\mathcal{L}_A(A\otimes\mathcal{H},X\otimes\mathcal{H})=M(X\otimes\mathcal{K}(\mathcal{H})).\]
    Using this, we show in Corollary \ref{Appendix.B.Cor.1} that the construction of Theorem~\ref{crossed product correspondences} recovers the crossed product correspondence $(X\rtimes_{\gamma,r}G,A\rtimes_{\alpha,r}G)$ in \cite{EKQR} for an action $(\gamma,\alpha)$ of $G$ on $(X,A)$. %Since we always want to use the left Haar measure, our reduced crossed product correspondence in this commutative case must be regarded as the one by a coaction of the Hopf $C^*$-algebra $S_{\widehat{W}_G}$ defined by the multiplicative unitary $\widehat{W}_G$.

    We first clarify the $C^*$-cor\-re\-spon\-dence $(\mathcal{L}_A(A\otimes\mathcal{H},X\otimes\mathcal{H}),\mathcal{L}_A(A\otimes\mathcal{H}))$ in the next lemma whose proof is trivial, and so we omit it.

    \begin{lem}\label{Appendix.B.Lemma0}
        The Banach space $\mathcal{L}_A(A\otimes\mathcal{H},X\otimes\mathcal{H})$ is a $C^*$-cor\-re\-spond\-ence over $\mathcal{L}_A(A\otimes\mathcal{H})$ with the following operations
        \begin{equation}\label{Appendix.B.Bimod.Operations}
            m\cdot l=m\circ l,\quad
            \langle m,n\rangle_{\mathcal{L}_A(A\otimes\mathcal{H})}=m^*\circ n,\quad
            \varphi_{\mathcal{L}_A(A\otimes\mathcal{H})}=\varphi_{M(A\otimes\mathcal{K}(\mathcal{H}))}
        \end{equation}
        for $m,n\in\mathcal{L}_A(A\otimes\mathcal{H},X\otimes\mathcal{H})$ and $l\in\mathcal{L}_A(A\otimes\mathcal{H})$.
    \end{lem}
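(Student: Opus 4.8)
The plan is to recognize the lemma as the special case $E=A\otimes\mathcal{H}$, $F=X\otimes\mathcal{H}$ of the standard construction that, for Hilbert $A$-modules $E$ and $F$, the adjointable operators $\mathcal{L}_A(E,F)$ form a right Hilbert $\mathcal{L}_A(E)$-module under $m\cdot l=m\circ l$ and $\langle m,n\rangle=m^*\circ n$. The only ingredient beyond this bookkeeping is the left action, which I would produce from the strict extension of $\varphi_A\otimes{\rm id}_{\mathcal{K}(\mathcal{H})}$. Since every verification reduces to associativity of composition and elementary properties of the strict extension, this is exactly why the proof may be dismissed as trivial.

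First I would check the Hilbert-module axioms. Writing $B=\mathcal{L}_A(A\otimes\mathcal{H})$ and $M=\mathcal{L}_A(A\otimes\mathcal{H},X\otimes\mathcal{H})$, the pairing $(m,n)\mapsto m^*\circ n$ takes values in $B$, is linear in the second variable, and satisfies $\langle m,n\rangle^*=\langle n,m\rangle$ and $\langle m,n\cdot l\rangle=\langle m,n\rangle\,l$ because $*$ reverses composition and composition is associative. Positivity $\langle m,m\rangle=m^*m\geq0$ holds in the $C^*$-algebra $B$, and $m^*m=0$ forces $\|m\|^2=\|m^*m\|=0$, giving definiteness. The resulting module norm $\|m^*m\|^{1/2}$ coincides with the operator norm on $M$, which is complete, so $M$ is a right Hilbert $B$-module.

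Next I would fix the left action. Using the well-known identification $B=M(A\otimes\mathcal{K}(\mathcal{H}))$ together with the standing nondegeneracy of $(X,A)$, the homomorphism $\varphi_A\otimes{\rm id}_{\mathcal{K}(\mathcal{H})}:A\otimes\mathcal{K}(\mathcal{H})\rightarrow\mathcal{L}(X\otimes\mathcal{H})$ is nondegenerate, so it extends strictly to $\overline{\varphi_A\otimes{\rm id}}:M(A\otimes\mathcal{K}(\mathcal{H}))\rightarrow\mathcal{L}(X\otimes\mathcal{H})$; this extension is precisely the map denoted $\varphi_{M(A\otimes\mathcal{K}(\mathcal{H}))}$. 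I would then set $\varphi_B(b)\,m:=\overline{\varphi_A\otimes{\rm id}}(b)\circ m$, which lies in $M$ since $\overline{\varphi_A\otimes{\rm id}}(b)\in\mathcal{L}(X\otimes\mathcal{H})$. That $\varphi_B$ is multiplicative and each $\varphi_B(b)$ is $B$-linear follow at once from associativity of composition, and adjointability on the Hilbert $B$-module $M$ follows from the computation $\langle\varphi_B(b)m,n\rangle=m^*\,\overline{\varphi_A\otimes{\rm id}}(b)^*\,n=m^*\,\overline{\varphi_A\otimes{\rm id}}(b^*)\,n=\langle m,\varphi_B(b^*)n\rangle$, so that $\varphi_B(b)^*=\varphi_B(b^*)$ and $\varphi_B:B\rightarrow\mathcal{L}_B(M)$ is a $*$-homomorphism.

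I do not expect any real obstacle. The only step deserving a word is the well-definedness of the left action: nondegeneracy of $\varphi_A$ is exactly what guarantees that $\varphi_A\otimes{\rm id}_{\mathcal{K}(\mathcal{H})}$ extends strictly to all of $M(A\otimes\mathcal{K}(\mathcal{H}))=\mathcal{L}_A(A\otimes\mathcal{H})$, ensuring that $\varphi_B(b)$ makes sense for every $b\in B$ rather than merely for $b$ in $A\otimes\mathcal{K}(\mathcal{H})$.
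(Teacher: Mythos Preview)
Your proposal is correct and is exactly the routine verification the paper has in mind; the paper itself declares the proof trivial and omits it entirely, so you have simply spelled out the details it suppresses. Your identification of the left action via the strict extension $\overline{\varphi_A\otimes{\rm id}_{\mathcal{K}(\mathcal{H})}}$ matches the paper's convention for $\varphi_{M(A\otimes\mathcal{K}(\mathcal{H}))}$ from equation~\eqref{Prel.module.ops.2}.
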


    Note that $(\mathcal{K}_A(A\otimes\mathcal{H},X\otimes\mathcal{H}),\mathcal{K}_A(A\otimes\mathcal{H}))$ is also a $C^*$-cor\-re\-spond\-ence with  the restriction of the operations given in \eqref{Appendix.B.Bimod.Operations}.

    \begin{lem}
        There exists an isomorphism
            \[(\psi_0,{\rm id}):(\mathcal{K}_A(A\otimes\mathcal{H},X\otimes\mathcal{H}),\mathcal{K}_A(A\otimes\mathcal{H}))
            \rightarrow(X\otimes\mathcal{K}(\mathcal{H}),A\otimes\mathcal{K}(\mathcal{H}))\]
        such that $\psi_0(\theta_{\xi\otimes h,\,a\otimes k})=\xi\cdot a^*\otimes\theta_{h,k}$ for $\xi\in X$, $a\in A$, and $h,k\in\mathcal{H}$.
    \end{lem}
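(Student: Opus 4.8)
The plan is to realise $(\psi_0,\id)$ as an explicit correspondence isomorphism, where $\id$ stands for the canonical $*$-isomorphism $\pi_0\colon\mathcal{K}_A(A\otimes\mathcal{H})\to A\otimes\mathcal{K}(\mathcal{H})$ determined by $\pi_0(\theta_{a\otimes k,\,b\otimes l})=ab^*\otimes\theta_{k,l}$ (the module form of the standard identity $\mathcal{K}_A(A\otimes\mathcal{H})=A\otimes\mathcal{K}(\mathcal{H})$). First I would define $\psi_0$ on the algebraic span of the rank-one operators $\theta_{\xi\otimes h,\,a\otimes k}$ by the stated formula $\psi_0(\theta_{\xi\otimes h,\,a\otimes k})=\xi\cdot a^*\otimes\theta_{h,k}$, and then obtain well-definedness together with isometry in one stroke by verifying the inner-product axiom.

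For axiom (ii) I would compute on generators. Writing $m=\theta_{\xi\otimes h,\,a\otimes k}$ and $n=\theta_{\eta\otimes h',\,b\otimes k'}$ and using $\theta_{u,v}\theta_{w,z}=\theta_{u\langle v,w\rangle,\,z}$ together with $\langle\xi\otimes h,\eta\otimes h'\rangle_A=\langle h,h'\rangle_{\mathcal{H}}\langle\xi,\eta\rangle_A$, one finds $m^*n=\langle h,h'\rangle\,\theta_{a\langle\xi,\eta\rangle\otimes k,\,b\otimes k'}$, so that $\pi_0(\langle m,n\rangle)=\langle h,h'\rangle\,a\langle\xi,\eta\rangle b^*\otimes\theta_{k,k'}$. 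On the target side, using $\langle\xi a^*,\eta b^*\rangle_A=a\langle\xi,\eta\rangle b^*$ and $\theta_{h,k}^*\theta_{h',k'}=\langle h,h'\rangle\theta_{k,k'}$ in the identity correspondence $\mathcal{K}(\mathcal{H})$, I get $\langle\psi_0(m),\psi_0(n)\rangle=\langle h,h'\rangle\,a\langle\xi,\eta\rangle b^*\otimes\theta_{k,k'}$, which is the same. Extending sesquilinearly gives $\langle\psi_0(m),\psi_0(n)\rangle=\pi_0(\langle m,n\rangle)$ on the span; since $\pi_0$ is isometric this forces $\|\psi_0(m)\|=\|m\|$, so $\psi_0$ is well defined and isometric and extends to $\mathcal{K}_A(A\otimes\mathcal{H},X\otimes\mathcal{H})$. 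Surjectivity then follows from density: the range contains every $\xi a^*\otimes\theta_{h,k}$, and since $\overline{X\cdot A}=X$ by Hewitt--Cohen factorization and the $\theta_{h,k}$ span a dense subspace of $\mathcal{K}(\mathcal{H})$, these elements have dense linear span in $X\otimes\mathcal{K}(\mathcal{H})$.

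The step I expect to be the main point is the left-action axiom (i), namely $\psi_0\big(\varphi_{\mathcal{L}_A(A\otimes\mathcal{H})}(l)\,m\big)=\varphi_{A\otimes\mathcal{K}(\mathcal{H})}(\pi_0(l))\,\psi_0(m)$ for $l\in\mathcal{K}_A(A\otimes\mathcal{H})$, since it is where the left action $\varphi_A$ of $X$ genuinely enters. Here the left action on the source from Lemma~\ref{Appendix.B.Lemma0} acts by $\varphi_{\mathcal{L}_A(A\otimes\mathcal{H})}(l)\,m=(\varphi_A\otimes\id)(\pi_0(l))\circ m$, where $\varphi_A\otimes\id\colon A\otimes\mathcal{K}(\mathcal{H})\to\mathcal{L}(X\otimes\mathcal{H})$ sends $a\otimes T$ to the operator $\xi\otimes v\mapsto\varphi_A(a)\xi\otimes Tv$. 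Taking $l=\theta_{c\otimes p,\,d\otimes q}$ and $m=\theta_{\xi\otimes h,\,a\otimes k}$, a short computation that uses the right $A$-linearity of $\varphi_A(cd^*)\in\mathcal{L}(X)$ gives $\varphi_{\mathcal{L}_A(A\otimes\mathcal{H})}(l)\,m=\langle q,h\rangle\,\theta_{(\varphi_A(cd^*)\xi)\otimes p,\,a\otimes k}$, whence $\psi_0$ of it equals $\langle q,h\rangle\,(\varphi_A(cd^*)\xi)a^*\otimes\theta_{p,k}$. On the other hand $\varphi_{A\otimes\mathcal{K}(\mathcal{H})}(\pi_0(l))\psi_0(m)=\varphi_A(cd^*)(\xi a^*)\otimes\theta_{p,q}\theta_{h,k}=\langle q,h\rangle\,\varphi_A(cd^*)(\xi a^*)\otimes\theta_{p,k}$, and the two agree because $\varphi_A(cd^*)(\xi a^*)=(\varphi_A(cd^*)\xi)a^*$. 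Thus $(\psi_0,\pi_0)$ is a correspondence homomorphism with both components bijective, which is the desired isomorphism.
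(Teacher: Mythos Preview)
Your proof is correct and takes a genuinely different route from the paper's. The paper establishes that $\psi_0$ is well defined and isometric by a direct norm computation: it shows that $\big\|\sum_i\theta_{\xi_i\otimes h_i,\,a_i\otimes k_i}\big\|=\big\|\sum_i\xi_i\cdot a_i^*\otimes\theta_{h_i,k_i}\big\|$ by first reducing (without loss of generality) to the case where the $h_i$ and the $k_i$ are separately orthonormal, and then invoking a matrix norm identity from \cite[Lemma~2.1]{KPWa} to evaluate both sides explicitly. Only after this computation does it say that ``the remaining parts of the lemma are now easily verified.'' You instead verify the inner-product identity $\langle\psi_0(m),\psi_0(n)\rangle=\pi_0(\langle m,n\rangle)$ on rank-one generators; sesquilinearity then propagates it to finite sums, and since $\pi_0$ is isometric this yields well-definedness and isometry simultaneously, with no orthonormality reduction and no external lemma. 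Your route is shorter and more conceptual, and it has the additional virtue of checking the correspondence-homomorphism axioms (i) and (ii) explicitly rather than leaving them to the reader. The paper's computation, on the other hand, makes the norm equality completely explicit, which may be of independent interest.
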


    \begin{proof}
        Let $\xi_i\in X$, $a_i\in A$, and $h_i,k_i\in\mathcal{H}$ for $i=1,\ldots,n$. We claim that that the norm of the operator $\sum_{i=1}^n\theta_{\xi_i\otimes h_i,\,a_i\otimes k_i}$ agrees with that of $\sum_{i=1}^n\xi_i\cdot a_i^*\otimes\theta_{h_i,k_i}$, which proves that $\psi_0$ is well-defined and isometric. For this, we may assume that the vectors $h_i$ are mutually orthonormal and similarly for $k_i$. Then
        \begin{align*}
            \big\|\sum_{i=1}^n\theta_{\xi_i\otimes h_i,\,a_i\otimes k_i}\big\|^2
            &= \big\|\sum_{i,j=1}^n\theta_{a_i\otimes k_i,\,\xi_i\otimes h_i}\theta_{\xi_j\otimes h_j,\,a_j\otimes k_j}\big\| \\
            &= \big\|\sum_{i,j=1}^n\theta_{(a_i\otimes k_i)\cdot\langle\xi_i\otimes h_i,\xi_j\otimes h_j\rangle_A,\,a_j\otimes k_j}\big\| \\
            &= \big\|\sum_{i=1}^n\theta_{a_i\langle\xi_i,\xi_i\rangle_A\otimes k_i,\,a_i\otimes k_i}\big\|.
        \end{align*}
        By \cite[Lemma 2.1]{KPWa}, the last of the above equalities coincides with the norm of the following product of two positive $n\times n$ matrices
            \[\Big(\big\langle a_i\langle\xi_i,\xi_i\rangle_A\otimes k_i,\,a_j\langle\xi_j,\xi_j\rangle_A\otimes k_j\big\rangle_A\Big)^{1/2}
                \Big(\langle a_i\otimes k_i,a_j\otimes k_j\rangle_A\Big)^{1/2}\]
        which is diagonal by orthogonality. Let
            \[b_i=\langle\xi_i\cdot a_i^*,\,\xi_i\rangle_A\quad(i=1,\ldots,n).\]
        Then
            \begin{align*}\label{Appendix.B.Eqn1.in.Lem}
                \big\|\sum_{i=1}^n\theta_{\xi_i\otimes h_i,\,a_i\otimes k_i}\big\|^2
                &= \max_{i=1,\ldots,n}\big\|\big(\langle\xi_i\cdot a_i^*,\,\xi_i\rangle_A^*\langle\xi_i\cdot a_i^*,\,\xi_i\rangle_A\big)^{1/2}(a_i^*a_i)^{1/2}\big\| \\
                &=\max_{i=1,\ldots,n}\|(b_i^*b_i)^{1/2}(a_i^*a_i)^{1/2}\|.
            \end{align*}
        On the other hand,
        \begin{align*}
            \big\|\sum_{i=1}^n\xi_i\cdot a_i^*\otimes\theta_{h_i,k_i}\big\|^2
            &= \big\|\sum_{i,j=1}^n\big\langle\xi_i\cdot a_i^*\otimes\theta_{h_i,k_i},\,\xi_j\cdot a_j^*\otimes\theta_{h_j,k_j}\big\rangle_{A\otimes\mathcal{K}(\mathcal{H})}\big\| \\
            &= \big\|\sum_{i,j=1}^n\langle\xi_i\cdot a_i^*,\,\xi_j\cdot a_j^*\rangle_A\otimes\theta_{k_i\langle h_i,h_j\rangle,\,k_j}\big\| \\
            &= \max_{i=1,\ldots,n}\|\langle\xi_i\cdot a_i^*,\,\xi_i\rangle_A\,a_i^*\|=\max_{i=1,\ldots,n}\|b_ia_i^*\|
        \end{align*}
        again by orthonormality. Our claim then follows since
        \begin{align*}
            \|(b_i^*b_i)^{1/2}(a_i^*a_i)^{1/2}\|^2
            &= \|(a_i^*a_i)^{1/2}b_i^*b_i(a_i^*a_i)^{1/2}\| \\
            &= \|b_i(a_i^*a_i)^{1/2}\|^2  = \|b_ia_i^*a_ib_i^*\|  = \|b_ia_i^*\|^2.
        \end{align*}
        The remaining parts of the lemma are now easily verified.
    \end{proof}

    In the next proposition, we identify $\mathcal{K}_A(A\otimes\mathcal{H},X\otimes\mathcal{H})=X\otimes\mathcal{K}(\mathcal{H})$. Note that for $m\in\mathcal{L}_A(A\otimes\mathcal{H},X\otimes\mathcal{H})$ and $f\in A\otimes\mathcal{K}(\mathcal{H})(=\mathcal{K}_A(A\otimes\mathcal{H}))$, the product $m\cdot f$ defines an element of $X\otimes\mathcal{K}(\mathcal{H})$.

    \begin{prop}\label{Appendix.B.Prop.1}
        There exists an isomorphism
            \[(\psi,{\rm id}):(\mathcal{L}_A(A\otimes\mathcal{H},X\otimes\mathcal{H}),\mathcal{L}_A(A\otimes\mathcal{H}))
            \rightarrow(M(X\otimes\mathcal{K}(\mathcal{H})),M(A\otimes\mathcal{K}(\mathcal{H})))\]
        such that
        \begin{equation*}\label{Appendix.B.Define.psi}
            \psi(m)\cdot f=m\cdot f
        \end{equation*}
        for $m\in\mathcal{L}_A(A\otimes\mathcal{H},X\otimes\mathcal{H})$ and $f\in A\otimes\mathcal{K}(\mathcal{H})$.
    \end{prop}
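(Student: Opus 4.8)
The plan is to recognize $(\mathcal{L}_A(A\otimes\mathcal{H},X\otimes\mathcal{H}),\mathcal{L}_A(A\otimes\mathcal{H}))$ as the multiplier correspondence of $(\mathcal{K}_A(A\otimes\mathcal{H},X\otimes\mathcal{H}),\mathcal{K}_A(A\otimes\mathcal{H}))$, which the preceding lemma identifies with $(X\otimes\mathcal{K}(\mathcal{H}),A\otimes\mathcal{K}(\mathcal{H}))$. Concretely I would show directly that $(\psi,{\rm id})$ is an injective correspondence homomorphism and then that it is surjective, following the pattern of the proof of Theorem~\ref{Cbstr=M}. Throughout I identify $\mathcal{K}_A(A\otimes\mathcal{H})=A\otimes\mathcal{K}(\mathcal{H})$ and $\mathcal{K}_A(A\otimes\mathcal{H},X\otimes\mathcal{H})=X\otimes\mathcal{K}(\mathcal{H})$ (the preceding lemma), while the second-coordinate map ${\rm id}$ is the standard isomorphism $\mathcal{L}_A(A\otimes\mathcal{H})=M(A\otimes\mathcal{K}(\mathcal{H}))$.

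First I would check that $\psi$ is well defined with values in $M(X\otimes\mathcal{K}(\mathcal{H}))$: for $m\in\mathcal{L}_A(A\otimes\mathcal{H},X\otimes\mathcal{H})$ and $f\in A\otimes\mathcal{K}(\mathcal{H})=\mathcal{K}_A(A\otimes\mathcal{H})$, the composite $m\circ f$ lies in $\mathcal{K}_A(A\otimes\mathcal{H},X\otimes\mathcal{H})=X\otimes\mathcal{K}(\mathcal{H})$, so $f\mapsto m\circ f$ is a right-$(A\otimes\mathcal{K}(\mathcal{H}))$-module map; it is adjointable with adjoint $g\mapsto m^{*}\circ g$, whence $\psi(m)\in\mathcal{L}_{A\otimes\mathcal{K}(\mathcal{H})}(A\otimes\mathcal{K}(\mathcal{H}),X\otimes\mathcal{K}(\mathcal{H}))=M(X\otimes\mathcal{K}(\mathcal{H}))$. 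To invoke \cite[Proposition~1.28]{EKQR} and conclude that $(\psi,{\rm id})$ is an injective correspondence homomorphism, I would then establish the three density conditions analogous to Lemma~\ref{Preparation.for.Applying.EKQR}, namely that $\varphi(\mathcal{L}_A(A\otimes\mathcal{H}))\,(X\otimes\mathcal{K}(\mathcal{H}))$, $(X\otimes\mathcal{K}(\mathcal{H}))\,\mathcal{L}_A(A\otimes\mathcal{H})$, and $\mathcal{L}_A(A\otimes\mathcal{H},X\otimes\mathcal{H})\,(A\otimes\mathcal{K}(\mathcal{H}))$ all equal $X\otimes\mathcal{K}(\mathcal{H})$; each follows from the existence of an approximate identity in $\mathcal{K}_A(A\otimes\mathcal{H})$, the nondegeneracy of $\varphi_A$, and the identity $\mathcal{L}_A(A\otimes\mathcal{H},X\otimes\mathcal{H})\circ\mathcal{K}_A(A\otimes\mathcal{H})=\mathcal{K}_A(A\otimes\mathcal{H},X\otimes\mathcal{H})$.

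The remaining and principal point is surjectivity of $\psi$. Given $n\in M(X\otimes\mathcal{K}(\mathcal{H}))$, I would manufacture $m\in\mathcal{L}_A(A\otimes\mathcal{H},X\otimes\mathcal{H})$ with $\psi(m)=n$ by cutting down to a corner. Fix a unit vector $e\in\mathcal{H}$ and the rank-one projection $p=\theta_{e,e}\in\mathcal{K}(\mathcal{H})$; the assignments $a\otimes\xi\mapsto a\otimes\theta_{\xi,e}$ and $\zeta\otimes h\mapsto\zeta\otimes\theta_{h,e}$ identify $A\otimes\mathcal{H}$ and $X\otimes\mathcal{H}$ isometrically, as Hilbert $A$-modules, with the corners $(A\otimes\mathcal{K}(\mathcal{H}))(1_{M(A)}\otimes p)$ and $(X\otimes\mathcal{K}(\mathcal{H}))(1_{M(A)}\otimes p)$. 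Since right multiplication by $1_{M(A)}\otimes p$ carries $n\cdot f$ into the latter corner, transporting $f\mapsto n\cdot f$ through these identifications defines an $A$-linear map $m\colon A\otimes\mathcal{H}\to X\otimes\mathcal{H}$; applying the same recipe to $n^{*}$ yields its adjoint, so $m\in\mathcal{L}_A(A\otimes\mathcal{H},X\otimes\mathcal{H})$. Finally I would verify $\psi(m)=n$ by checking $m\circ f=n\cdot f$ on generators $f=a\otimes\theta_{h,k}$ of $A\otimes\mathcal{K}(\mathcal{H})$, using the factorization $\theta_{h,k}=\theta_{h,e}\theta_{e,k}$ so that the corner-restricted data already determine the full action.

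I expect the surjectivity step to be the main obstacle: it carries the genuine content of the proposition (the precise module analogue of the classical identity $\mathcal{L}_A(A\otimes\mathcal{H})=M(A\otimes\mathcal{K}(\mathcal{H}))$), and the care lies in showing that the operator reconstructed from the single corner $1_{M(A)}\otimes p$ is adjointable and reproduces the action of $n$ on \emph{all} of $A\otimes\mathcal{K}(\mathcal{H})$, not merely on that corner. The injectivity-and-homomorphism step, by contrast, is routine once the density conditions are in place, exactly mirroring Theorem~\ref{Cbstr=M}.
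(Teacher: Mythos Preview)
Your treatment of injectivity is essentially the paper's: you verify the same three density conditions and invoke \cite[Proposition~1.28]{EKQR}, exactly as the paper does (the paper simply asserts the three conditions ``can be easily seen to hold'').

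Your surjectivity argument, however, takes a genuinely different route. The paper argues by strict approximation: given $n\in M(X\otimes\mathcal{K}(\mathcal{H}))$ it chooses a net $\{x_i\}$ in $X\otimes\mathcal{K}(\mathcal{H})$ converging strictly to $n$, observes that the pointwise limits $m_nh:=\lim_i x_ih$ and $m_n^*k:=\lim_i x_i^*k$ exist for $h\in A\otimes\mathcal{H}$ and $k\in X\otimes\mathcal{H}$, and checks that this defines an adjointable operator with $\psi(m_n)=n$. Your corner trick---identifying $A\otimes\mathcal{H}$ and $X\otimes\mathcal{H}$ with the columns $(A\otimes\mathcal{K}(\mathcal{H}))(1\otimes p)$ and $(X\otimes\mathcal{K}(\mathcal{H}))(1\otimes p)$ via a rank-one $p=\theta_{e,e}$, restricting $n$ to that corner, and recovering the full action from the factorization $\theta_{h,k}=\theta_{h,e}\theta_{e,k}$---is equally valid and is the standard device behind the classical identity $\mathcal{L}_A(A\otimes\mathcal{H})=M(A\otimes\mathcal{K}(\mathcal{H}))$. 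The paper's approach is shorter and avoids tracking the corner identifications, while yours is more explicit (no limits, and the preimage $m$ is given by a formula in $n$) and makes transparent why a single column already determines the multiplier. Either argument is fine; the paper's is slightly more economical here because the required identifications $\psi_0$ have just been set up in the preceding lemma, so strict density does the job in three lines.
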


    \begin{proof}
        For the operations on \eqref{Appendix.B.Bimod.Operations}, the following can be easily seen to hold:
        \begin{itemize}%\setlength{\itemsep}{0.4ex plus 0.2ex minus 0.2ex}
            \item[(i)] $\mathcal{K}_A(A\otimes\mathcal{H},X\otimes\mathcal{H})\cdot\mathcal{L}_A(A\otimes\mathcal{H})=
            \mathcal{K}_A(A\otimes\mathcal{H},X\otimes\mathcal{H})$;
            \item[(ii)] $\varphi_{\mathcal{L}_A(A\otimes\mathcal{H})}\big(\mathcal{L}_A(A\otimes\mathcal{H})\big)\,
                \mathcal{K}_A(A\otimes\mathcal{H},X\otimes\mathcal{H}) =\mathcal{K}_A(A\otimes\mathcal{H},X\otimes\mathcal{H})$;
            \item[(iii)] $\mathcal{L}_A(A\otimes\mathcal{H},X\otimes\mathcal{H})\cdot\mathcal{K}_A(A\otimes\mathcal{H})=
            \mathcal{K}_A(A\otimes\mathcal{H},X\otimes\mathcal{H})$.
        \end{itemize}
        Thus $(\psi,{\rm id})$ is an injective correspondence homomorphism by \cite[Proposition 1.28]{EKQR}. To see that $\psi$ is surjective, let $n\in M(X\otimes\mathcal{K}(\mathcal{H}))$. Take a net $\{x_i\}$ in $X\otimes\mathcal{K}(\mathcal{H})$ strictly converging to $n$. Then the limits $\lim_ix_ih$ and $\lim_ix_i^*k$ clearly exist for $h\in A\otimes\mathcal{H}$ and $k\in X\otimes\mathcal{H}$. Define $m_n:A\otimes\mathcal{H}\rightarrow X\otimes\mathcal{H}$ and $m_n^*:X\otimes\mathcal{H}\rightarrow A\otimes\mathcal{H}$ by
            \[m_nh=\lim_ix_ih,\quad m_n^*k=\lim_ix_i^*k.\]
        We see from
            \[\langle m_nh,k\rangle_A=\lim_i\langle x_ih,k\rangle_A=\lim_i\langle h,x_i^*k\rangle_A=\langle h,m_n^*k\rangle_A\]
        that $m_n\in\mathcal{L}_A(A\otimes\mathcal{H},X\otimes\mathcal{H})$ with the adjoint $m_n^*$. Obviously $\psi(m_n)=n$.
    \end{proof}

    From now on, we identify $\mathcal{L}_A(A\otimes\mathcal{H},X\otimes\mathcal{H})=M(X\otimes\mathcal{K}(\mathcal{H}))$.

    \begin{rmk}\label{Appendix.B.Rmk.EmbedA}\rm
        Let $\mu_G:C_0(G)\hookrightarrow\mathcal{L}(L^2(G))$ be the embedding in \eqref{Prel.pi.and.U}. The strict extension $\overline{{\rm id}_X\otimes\mu_G}$ then embeds $M(X\otimes C_0(G))$ into $M(X\otimes\mathcal{K}(L^2(G)))$ such that if $m\in C_b(G,M(X)_s)$ and $h\in C_c(G,A)\subseteq A\otimes L^2(G)$, then
            \begin{equation*}\label{Appendix.B.Rmk.1}
                (\overline{{\rm id}\otimes\mu_G}(m)h)(r)=m(r)\cdot h(r)\quad (r\in G)
            \end{equation*}
        by strict continuity.
    \end{rmk}

    Let $(\gamma,\alpha)$ be an action of a locally compact group $G$ on $(X,A)$. The \emph{crossed product correspondence} $(X\rtimes_{\gamma,r}G,A\rtimes_{\alpha,r}G)$ is the completion of the $C_c(G,A)$-bimodule $C_c(G,X)$ such that
        \begin{align*}
            (x\cdot f)(r) &=\int_Gx(s)\cdot\alpha_s(f(s^{-1}r))\,ds, \\
            \langle x,y\rangle_{A\rtimes_{\alpha,r}G}(r) &=\int_G\alpha_s^{-1}(\langle x(s),y(sr)\rangle_A)\,ds, \\
            \big(\varphi_{A\rtimes_{\alpha,r}G}(f)\,x\big)(r) &=\int_G\varphi_A(f(s))\,\gamma_s(x(s^{-1}r))\,ds
        \end{align*}
    for $x,y\in C_c(G,X)$, $f\in C_c(G,A)$, and $r\in G$ (\cite[Proposition 3.2]{EKQR}).

    \begin{rmk}\label{Appendix.B.Cc.Density}\rm
        The algebraic tensor product $X\odot C_c(G)$ is dense in $X\rtimes_{\gamma,r}G$. This is because $X\odot C_c(G)$ is $L^1$-norm dense in $C_c(G,X)$ and the crossed product norm on $C_c(G,A)$ is dominated by its $L^1$-norm.
    \end{rmk}

    The proof of the next theorem is only sketched.

    \begin{thm}\label{Appendix.B.Thm.}
        Let $(\gamma,\alpha)$ be an action of a locally compact group $G$ on a $C^*$-cor\-re\-spond\-ence $(X,A)$. Then, there exists an injective correspondence homomorphism
            \[(\psi_\gamma,\pi_\alpha):(X\rtimes_{\gamma,r}G,A\rtimes_{\alpha,r}G)
            \rightarrow(\mathcal{L}_A(A\otimes L^2(G),X\otimes L^2(G)),\mathcal{L}_A(A\otimes L^2(G)))\]
        such that
        \begin{align*}\label{Appendix.B.Int.Form}
            \big(\psi_\gamma(x)h\big)(r) &=\int_G\gamma_{r}^{-1}(x(s))\cdot h(s^{-1}r)\,ds, \\ %\label{Appendix.B.Int.Form2}
            \big(\pi_\alpha(f)h\big)(r) &=\int_G\alpha_r^{-1}(f(s))\,h(s^{-1}r)\,ds
        \end{align*}
        for $x\in C_c(G,X)$, $f\in C_c(G,A)$, $h\in C_c(G,A)$, and $r\in G$.
    \end{thm}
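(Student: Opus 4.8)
The plan is to realize $(\psi_\gamma,\pi_\alpha)$ as the integrated form of a covariant representation of the action $(\gamma,\alpha)$ on the Hilbert modules $A\otimes L^2(G)$ and $X\otimes L^2(G)$, identified with $L^2(G,A)$ and $L^2(G,X)$. First I would introduce three building blocks: a representation $\widetilde{\alpha}$ of $A$ and a linear map $\widetilde{\gamma}$ of $X$, acting by
\[(\widetilde{\alpha}(a)h)(r)=\alpha_r^{-1}(a)\,h(r),\qquad (\widetilde{\gamma}(\xi)h)(r)=\gamma_r^{-1}(\xi)\cdot h(r),\]
together with the amplified left regular representation $1\otimes\lambda_s$, where $(\lambda_sh)(r)=h(s^{-1}r)$. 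A direct computation exhibits the adjoint of $\widetilde{\gamma}(\xi)$, namely $(\widetilde{\gamma}(\xi)^*k)(r)=\langle\gamma_r^{-1}(\xi),k(r)\rangle_A$, so that $\widetilde{\gamma}(\xi)\in\mathcal{L}_A(A\otimes L^2(G),X\otimes L^2(G))$ and $\widetilde{\alpha}(a)\in\mathcal{L}_A(A\otimes L^2(G))$ by Proposition~\ref{Appendix.B.Prop.1}.

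Next I would verify that $(\widetilde{\gamma},\widetilde{\alpha})$ is a correspondence homomorphism and record the covariance relations with $1\otimes\lambda_s$. Using that an action satisfies $\langle\gamma_r(\xi),\gamma_r(\eta)\rangle_A=\alpha_r(\langle\xi,\eta\rangle_A)$ and $\gamma_r(\varphi_A(a)\xi)=\varphi_A(\alpha_r(a))\gamma_r(\xi)$, one checks pointwise that $\widetilde{\gamma}(\xi)^*\widetilde{\gamma}(\eta)=\widetilde{\alpha}(\langle\xi,\eta\rangle_A)$ and $\widetilde{\gamma}(\varphi_A(a)\xi)=\varphi_{\mathcal{L}_A(A\otimes L^2(G))}(\widetilde{\alpha}(a))\,\widetilde{\gamma}(\xi)$, together with
\[(1\otimes\lambda_s)\widetilde{\alpha}(a)=\widetilde{\alpha}(\alpha_s(a))(1\otimes\lambda_s),\qquad (1\otimes\lambda_s)\widetilde{\gamma}(\xi)=\widetilde{\gamma}(\gamma_s(\xi))(1\otimes\lambda_s).\]
I then define, for $f\in C_c(G,A)$ and $x\in C_c(G,X)$, the operators $\pi_\alpha(f)$ and $\psi_\gamma(x)$ by the stated pointwise integral formulas; these coincide with $\int_G\widetilde{\alpha}(f(s))(1\otimes\lambda_s)\,ds$ and $\int_G\widetilde{\gamma}(x(s))(1\otimes\lambda_s)\,ds$, and adjointability of $\psi_\gamma(x)$ is read off directly from the adjoint of $\widetilde{\gamma}$.

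The heart of the argument is to verify the two defining identities of a correspondence homomorphism on the dense subspaces $C_c(G,X)\subseteq X\rtimes_{\gamma,r}G$ and $C_c(G,A)\subseteq A\rtimes_{\alpha,r}G$ (Remark~\ref{Appendix.B.Cc.Density}). Expanding $\psi_\gamma(x)^*\psi_\gamma(y)$ as a double integral, pushing the regular representation through $\widetilde{\alpha}$ by the covariance relations, and performing the substitution $t\mapsto sr$ with the attendant Haar and modular bookkeeping collapses the expression precisely to $\pi_\alpha(\langle x,y\rangle_{A\rtimes_{\alpha,r}G})$ for the crossed-product inner product of \cite[Proposition~3.2]{EKQR}. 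An entirely analogous convolution computation gives $\psi_\gamma(\varphi_{A\rtimes_{\alpha,r}G}(f)x)=\varphi_{\mathcal{L}_A(A\otimes L^2(G))}(\pi_\alpha(f))\,\psi_\gamma(x)$, and the same scheme shows that $\pi_\alpha$ is multiplicative and $*$-preserving; that is, $\pi_\alpha$ is the (faithful) regular representation defining the reduced crossed product $A\rtimes_{\alpha,r}G$.

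Finally I would promote these relations from $C_c$ to the completions and deduce injectivity. Since $\pi_\alpha$ is the reduced regular representation it is isometric, and the inner-product identity $\psi_\gamma(x)^*\psi_\gamma(x)=\pi_\alpha(\langle x,x\rangle_{A\rtimes_{\alpha,r}G})$ then forces $\|\psi_\gamma(x)\|=\|x\|$, so $\psi_\gamma$ is isometric on $C_c(G,X)$ and extends to all of $X\rtimes_{\gamma,r}G$; by density the correspondence-homomorphism identities persist. As $\pi_\alpha$ is injective, $(\psi_\gamma,\pi_\alpha)$ is an injective correspondence homomorphism in the sense of the Preliminaries. The step I expect to be the main obstacle is purely computational: keeping the convolution variables, the inverses $\alpha_r^{-1},\gamma_r^{-1}$, and the modular function consistent so that the double integrals reassemble into exactly the crossed-product operations of \cite[Proposition~3.2]{EKQR}. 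The conceptual content---covariance plus the fact that the regular representation of the reduced crossed product is isometric---is otherwise routine.
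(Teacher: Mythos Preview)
Your proposal is correct and follows essentially the same approach as the paper: define $\psi_\gamma$ and $\pi_\alpha$ by the stated integral formulas, exhibit the adjoint of $\psi_\gamma(x)$ explicitly, verify the two correspondence-homomorphism identities by direct convolution computations, and deduce injectivity from the fact that $\pi_\alpha$ is the faithful regular representation of $A\rtimes_{\alpha,r}G$. Your packaging via the covariant triple $(\widetilde{\gamma},\widetilde{\alpha},1\otimes\lambda)$ is a slightly more structured way of organizing the same calculations---the paper instead writes down the adjoint $\rho_\gamma(x)$ directly and uses the strict extension $\overline{\pi_\alpha}$ on $A$ and on $C_c(G)$ separately to check the left-action identity---but the underlying computations and logical flow are identical.
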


    \begin{proof}
        It is well-known that $\pi_\alpha$ gives a nondegenerate embedding.

        For each $x\in C_c(G,X)\subseteq X\rtimes_{\gamma,r}G$, define $\rho_\gamma(x):C_c(G,X)\rightarrow C_c(G,A)$ by
            \[(\rho_\gamma(x)k)(r)=\int_G\Delta(r^{-1})\langle\gamma_{s}^{-1}(x(sr^{-1})),k(s)\rangle_A\,ds\]
        for $k\in C_c(G,X)$ and $r\in G$, where $\Delta$ is the modular function of $G$. A routine computation yields
        \begin{equation}\label{Appendix.B.Adj.Formula}
            \langle\psi_\gamma(x)h,k\rangle_A=\langle h,\rho_\gamma(x)k\rangle_A,\quad
            \rho_\gamma(x)\psi_\gamma(y)=\pi_\alpha(\langle x,y\rangle_{A\rtimes_{\alpha,r}G})
        \end{equation}
        for $h\in C_c(G,A)\subseteq A\otimes L^2(G)$, $k\in C_c(G,X)\subseteq X\otimes L^2(G)$, and $x,y\in C_c(G,X)$. This shows that $\psi_\gamma$ and $\rho_\gamma$ both extend continuously to all of $X\rtimes_{\gamma,r}G$, and $\psi_\gamma(x)\in\mathcal{L}_A(A\otimes L^2(G),X\otimes L^2(G))$ for $x\in X\rtimes_{\gamma,r}G$ with the adjoint $\psi_\gamma(x)^*=\rho_\gamma(x)$.

        The second relation in \eqref{Appendix.B.Adj.Formula} gives %one of the condition that $(\psi_\gamma,\pi_\alpha)$ is a correspondence homomorphism, namely
            $\langle\psi_\gamma(x),\psi_\gamma(y)\rangle_{\mathcal{L}_A(A\otimes L^2(G))}=\pi_\alpha(\langle x,y\rangle_{A\rtimes_{\alpha,r}G})$
        for $x,y\in X\rtimes_{\gamma,r}G$. Let $a\in A$ and $\phi\in C_c(G)\subseteq C^*_r(G)$. Since the strict extension $\overline{\pi_\alpha}$ embeds $A$ into $\mathcal{L}_A(A\otimes L^2(G))$ such that $\big(\overline{\pi_\alpha}(a)h\big)(r)=\alpha_r^{-1}(a)h(r)$, we deduce that
            \begin{equation*}\label{Appendix.B.Cov.Rep.1}
                \big(\varphi_{\mathcal{L}_A(A\otimes L^2(G))}(\overline{\pi_\alpha}(a))\,k\big)(r)=\varphi_A(\alpha_r^{-1}(a))k(r)
            \end{equation*}
        for $k\in C_c(G,X)$ and $r\in G$. Similarly,
            \begin{equation*}\label{Appendix.B.Cov.Rep.2}
                \big(\varphi_{\mathcal{L}_A(A\otimes L^2(G))}\big(\overline{\pi_\alpha}(\phi)\big)k\big)(r)=\int_G\phi(s)k(s^{-1}r)\,ds
            \end{equation*}
        for $k\in C_c(G,X)$ and $r\in G$. An easy computation then verifies
            \[\big(\psi_\gamma\big(\varphi_{A\rtimes_{\alpha,r}G}(a\otimes\phi)x\big)h\big)(r)=\big(\varphi_{\mathcal{L}_A(A\otimes L^2(G))}\big(\pi_\alpha(a\otimes\phi)\big)\psi_\gamma(x)h\big)(r)\]
        for $x\in C_c(G,X)$, $h\in C_c(G,A)$, and $r\in G$, which gives
            \begin{equation*}\label{Appendix.B.Eqn.7}
                \psi_\gamma\big(\varphi_{A\rtimes_{\alpha,r}G}(a\otimes\phi)\,x\big)=\varphi_{\mathcal{L}_A(A\otimes L^2(G))}\big(\pi_\alpha(a\otimes\phi)\big)\psi_\gamma(x).
            \end{equation*}
        The same equality is now true for $f\in A\rtimes_{\alpha,r}G$ in place of $a\otimes\phi$ and for $x\in X\rtimes_{\gamma,r}G$, and hence $(\psi_\gamma,\pi_\alpha)$ is a correspondence homomorphism. Finally, it is injective since $\pi_\alpha$ is injective.
    \end{proof}

    Let $(\gamma,\alpha)$ be an action of $G$ on $(X,A)$, and $(\sigma^\gamma,\delta^\alpha)$ be the corresponding coaction. Define
        \begin{equation}\label{Appendix.B.Cor.1.Eqn}
            \sigma^\gamma_G=\overline{{\rm id}_X\otimes\check{\mu}_G}\circ\sigma^\gamma,\quad
            \delta^\alpha_G=\overline{{\rm id}_X\otimes\check{\mu}_G}\circ\delta^\alpha,
        \end{equation}
    where $\check{\mu}_G:C_0(G)\rightarrow S_{\widehat{W}_G}$ is the Hopf $C^*$-algebra isomorphism given in \eqref{Prel.pi.and.U}. Then $(\sigma^\gamma_G,\delta^\alpha_G)$ is a coaction of $S_{\widehat{W}_G}$ on $(X,A)$. In the next corollary, we regard $\sigma^\gamma_{G\iota}(X)=\overline{{\rm id}_X\otimes\iota_{S_{\widehat{W}_G}}}(\sigma^\gamma_G(X))$ as a subspace of $\mathcal{L}_A(A\otimes L^2(G),X\otimes L^2(G))$.

    \begin{cor}\label{Appendix.B.Cor.1}
        Let $(\gamma,\alpha)$ be an action of a locally compact group $G$ on $(X,A)$. Then $(\psi_\gamma,\pi_\alpha)$ in Theorem \ref{Appendix.B.Thm.} gives an isomorphism from $(X\rtimes_{\gamma,r}G,A\rtimes_{\alpha,r}G)$ onto $(X\rtimes_{\sigma^\gamma_G}\widehat{S}_{\widehat{W}_G},A\rtimes_{\delta^\alpha_G}\widehat{S}_{\widehat{W}_G})$ such that
            \begin{equation}\label{Appendix.B.Cor.1.F}
                \psi_\gamma(\xi\otimes\phi)=\sigma^\gamma_{G\iota}(\xi)\cdot(1_{M(A)}\otimes\phi),\quad
                \pi_\alpha(a\otimes\phi)=\delta^\alpha_{G\iota}(a)(1_{M(A)}\otimes\phi)
            \end{equation}
        for $\xi\in X$, $a\in A$, and $\phi\in C_c(G)$.
    \end{cor}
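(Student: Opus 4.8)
The plan is to verify the two explicit formulas \eqref{Appendix.B.Cor.1.F} on the generating simple tensors first, and then promote them to the claimed isomorphism by a density argument that uses the injectivity of $(\psi_\gamma,\pi_\alpha)$ established in Theorem~\ref{Appendix.B.Thm.}. For the formulas themselves I would compute both sides as concrete operators acting on a function $h\in C_c(G,A)\subseteq A\otimes L^2(G)$, using Proposition~\ref{Appendix.B.Prop.1} to regard $\sigma^\gamma_{G\iota}(\xi)$ and $\delta^\alpha_{G\iota}(a)$ as elements of $\mathcal{L}_A(A\otimes L^2(G),X\otimes L^2(G))=M(X\otimes\mathcal{K}(L^2(G)))$. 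On the left, Theorem~\ref{Appendix.B.Thm.} gives for $\xi\otimes\phi$ (the function $s\mapsto\phi(s)\xi$) the formula $(\psi_\gamma(\xi\otimes\phi)h)(r)=\int_G\phi(s)\,\gamma_r^{-1}(\xi)\cdot h(s^{-1}r)\,ds$, and similarly for $\pi_\alpha(a\otimes\phi)$.

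On the right, I would first record that $\widehat{S}_{\widehat{W}_G}=C^*_r(G)$ acts on $L^2(G)$ by the left regular representation $\lambda$, so that $1_{M(A)}\otimes\phi=1\otimes\lambda(\phi)$ and $((1\otimes\lambda(\phi))h)(r)=\int_G\phi(s)h(s^{-1}r)\,ds$. Next, since $\check\mu_G(f)=\mu_G(\check f)$ with $\check f(r)=f(r^{-1})$ by \eqref{Prel.pi.and.U}, the $\check\mu_G$-analogue of Remark~\ref{Appendix.B.Rmk.EmbedA} reads $(\overline{{\rm id}_X\otimes\check\mu_G}(m)h)(r)=m(r^{-1})\cdot h(r)$ for $m\in C_b(G,M(X)_s)$. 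Applying this to $m=\sigma^\gamma(\xi)$, where $m(r^{-1})=\gamma_r^{-1}(\xi)$, gives $(\sigma^\gamma_{G\iota}(\xi)h)(r)=\gamma_r^{-1}(\xi)\cdot h(r)$, and likewise $(\delta^\alpha_{G\iota}(a)h)(r)=\alpha_r^{-1}(a)\,h(r)=(\overline{\pi_\alpha}(a)h)(r)$. Composing with $1\otimes\lambda(\phi)$ and comparing the resulting convolution kernels with the integral formulas above yields \eqref{Appendix.B.Cor.1.F} termwise.

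With \eqref{Appendix.B.Cor.1.F} in hand the isomorphism follows formally. By Remark~\ref{Appendix.B.Cc.Density} the simple tensors $\xi\otimes\phi$ ($\xi\in X$, $\phi\in C_c(G)$) span a dense subspace of $X\rtimes_{\gamma,r}G$ and the $a\otimes\phi$ span a dense subspace of $A\rtimes_{\alpha,r}G$. Since $(\psi_\gamma,\pi_\alpha)$ is injective, hence isometric, it carries these spaces isometrically onto the closures of $\{\sigma^\gamma_{G\iota}(\xi)\cdot(1_{M(A)}\otimes\phi)\}$ and $\{\delta^\alpha_{G\iota}(a)(1_{M(A)}\otimes\phi)\}$. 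As $C_c(G)$ is dense in $\widehat{S}_{\widehat{W}_G}=C^*_r(G)$, these closures are precisely $X\rtimes_{\sigma^\gamma_G}\widehat{S}_{\widehat{W}_G}$ and $A\rtimes_{\delta^\alpha_G}\widehat{S}_{\widehat{W}_G}$ by the definition of the reduced crossed product correspondence in Section~\ref{Sec.4}. Therefore $(\psi_\gamma,\pi_\alpha)$ restricts to a surjective, and hence bijective, correspondence homomorphism onto the target, i.e.\ an isomorphism.

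The main obstacle I anticipate is the careful bookkeeping in the operator computation: correctly tracking the $r\mapsto r^{-1}$ twist introduced by $\check\mu_G$ (as opposed to the $\mu_G$ of Remark~\ref{Appendix.B.Rmk.EmbedA}), identifying $1_{M(A)}\otimes\phi$ with $1\otimes\lambda(\phi)$ under $\widehat{S}_{\widehat{W}_G}=C^*_r(G)$, and matching the convolution kernels so that the two sides of \eqref{Appendix.B.Cor.1.F} agree. Once these identifications are pinned down, the equalities are a direct verification and the density argument closes the proof.
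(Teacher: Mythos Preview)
Your proposal is correct and follows essentially the same route as the paper's proof: verify \eqref{Appendix.B.Cor.1.F} on simple tensors by computing both sides as operators on $h\in C_c(G,A)$, then invoke the density of $X\odot C_c(G)$ (Remark~\ref{Appendix.B.Cc.Density}) together with the isometry of $\psi_\gamma$ to conclude surjectivity onto $X\rtimes_{\sigma^\gamma_G}\widehat{S}_{\widehat{W}_G}$. Your treatment is in fact more careful than the paper's terse reference to Remark~\ref{Appendix.B.Rmk.EmbedA}, since you explicitly track the $r\mapsto r^{-1}$ twist introduced by $\check\mu_G$ versus $\mu_G$.
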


    \begin{proof}
        We only need to prove that $\psi_\gamma$ satisfies the first equality in \eqref{Appendix.B.Cor.1.F} and gives a surjection onto $X\rtimes_{\sigma^\gamma_G}\widehat{S}_{\widehat{W}_G}$. Let $\xi\in X$ and $\phi\in C_c(G)$. We see from Remark \ref{Appendix.B.Rmk.EmbedA} that
            \[(\sigma^\gamma_{G\iota}(\xi)\,h)(r)=\gamma_r^{-1}(\xi)\cdot h(r)\]
        for $h\in C_c(G,A)$ and $r\in G$. Hence
            \[\big(\psi_\gamma(\xi\otimes\phi)h\big)(r) = \gamma_r^{-1}(\xi)\cdot\int_G\phi(s)h(s^{-1}r)\,ds =
            \big(\sigma^\gamma_{G\iota}(\xi)\big((1_{M(A)}\otimes\phi)h\big)\big)(r),\]
        which shows the first equality in \eqref{Appendix.B.Cor.1.F}. Since $X\odot C_c(G)$ is dense in $X\rtimes_{\gamma,r}G$ by Remark \ref{Appendix.B.Cc.Density} and $\psi_\gamma$ is isometric, we must have $\psi_\gamma(X\rtimes_{\gamma,r}G)=X\rtimes_{\sigma^\gamma_G}\widehat{S}_{\widehat{W}_G}$.
    \end{proof}

    We now provide a proof of Corollary \ref{Cor1.to.Main.Thm}.

    \begin{proof}[Proof of Corollary \ref{Cor1.to.Main.Thm}]
        Let
            \[\zeta_G=\overline{{\rm id}_{\mathcal{O}_X}\otimes\check{\mu}_G}\circ\zeta.\]
        Clearly, $\zeta_G$ is the coaction of $S_{\widehat{W}_G}$ on $\mathcal{O}_X$ induced by $(\sigma^\gamma_G,\delta^\alpha_G)$. Define a representation
            \[(k_X\rtimes_\gamma G,k_A\rtimes_\gamma G):(X\rtimes_{\gamma,r}G,A\rtimes_{\alpha,r}G)\rightarrow\mathcal{O}_X\rtimes_{\beta^\zeta,r}G\]
        to be the composition as indicated in the following diagram:
            \[ \xymatrix{(X\rtimes_{\gamma,r}G,A\rtimes_{\alpha,r}G) \ar[rr]^-{(\psi_\gamma,\pi_\alpha)} \ar @{-->}[d]_-{(k_X\rtimes_\gamma G,k_A\rtimes_\alpha G)}
            && (X\rtimes_{\sigma^\gamma_G}\widehat{S}_{\widehat{W}_G},A\rtimes_{\delta^\alpha_G}\widehat{S}_{\widehat{W}_G})
            \ar[d]^-{(k_X\rtimes{\rm id}_{\widehat{S}_{\widehat{W}_G}},k_A\rtimes{\rm id}_{\widehat{S}_{\widehat{W}_G}})} \\
                \mathcal{O}_X\rtimes_{\beta^\zeta,r}G && \mathcal{O}_X\rtimes_{\zeta_G}\widehat{S}_{\widehat{W}_G}. \ar @{=}[ll] } \]
        By definition (\eqref{Sec.5.Rep.1} and \eqref{Appendix.B.Cor.1.F}), we have
            \[(k_X\rtimes_\gamma G)(f)(r)=k_X(f(r)),\quad(k_A\rtimes_\alpha G)(g)(r)=k_A(g(r))\]
        for $f\in C_c(G,X)$, $g\in C_c(G,A)$, and $r\in G$. The conclusion then follows by Theorem \ref{Main.Theorem.}.
    \end{proof}

}

%\end{appendices}

%\let\bibsection\relax

\end{document}